\newtheorem{Lemma}{Lemma}[section]
\newtheorem{Theorem}[Lemma]{Theorem}
\newtheorem{Proposition}[Lemma]{Proposition}
\newtheorem{Corollary}[Lemma]{Corollary}
\newtheorem{Remark}[Lemma]{Remark}
\def\C {\mathbb C}
\def\R {\mathbb R}
\def\Z {\mathbb Z}
\newcommand{\cC}{{\mathcal C}}
\newcommand{\cD}{{\mathcal D}}
\newcommand{\cF}{{\mathcal F}}
\newcommand{\cH}{{\mathcal H}}
\newcommand{\cI}{{\mathcal I}}
\newcommand{\cL}{{\mathcal L}}
\newcommand{\cS}{{\mathcal S}}
\newcommand{\V}{{\mathcal V}}
\newcommand{\cO}{{\mathcal O}}
\newcommand{\dprod}[2]{\langle{#1},{#2}\rangle}
\def\uh{ {\overline u} }
\def\ua{ {\underline u} }
\def\vh{ {\overline v} }
\def\wh{ {\overline w} }
\def\wa{ {\underline w} }
\def\abar {{\overline{a}}}
\def\zbar {{\overline{z}}}
\def\Bh{\overset{\rightharpoonup}{\vphantom{a}\smash{\mathcal{B}}}}
\def\Ba{\overset{\leftharpoonup}{\vphantom{a}\smash{\mathcal{B}}}}
\def\Dh{\overset{\rightharpoonup}{\vphantom{a}\smash{\mathcal{D}}}}
\def\Da{\overset{\leftharpoonup}{\vphantom{a}\smash{\mathcal{D}}}}
\def\uh{\overset{\rightharpoonup}{\vphantom{a}\smash{u}}}
\def\ua{\overset{\leftharpoonup}{\vphantom{a}\smash{u}}}
\def\vh{\overset{\rightharpoonup}{\vphantom{a}\smash{v}}}
\def\wh{{\overset{\rightharpoonup}{\vphantom{a}\smash{w}}}}
\def\wa{{\overset{\leftharpoonup}{\vphantom{a}\smash{w}}}}
\newcommand{\diag}{\operatorname{diag}}
\newcommand{\<}{\langle}
\renewcommand{\>}{\rangle}
\renewcommand{\(}{\left(}
\renewcommand{\)}{\right)}
\renewcommand{\Im}{\operatorname{Im}}
\renewcommand{\Re}{\operatorname{Re}}
\newcommand{\p}{\partial}
\newcommand{\Vol}{\operatorname{Vol}}
\newcommand{\sgn}{\operatorname{sgn}}
\newcommand{\id}{\operatorname{Id}}
\newcommand{\supp}{\operatorname{supp}}
\newcommand{\const}{\operatorname{const}}
\newcommand{\sing}{\operatorname{sing}}
\begin{document}
\title[The attenuated ray transform on pairs]{Inversion formulas and range characterizations for the attenuated geodesic ray transform}

\author{Yernat M. Assylbekov}
\address{Department of Mathematics, Northeastern University, Boston, MA 02115, USA}
\email{y\_assylbekov@yahoo.com}

\author{Fran\c{c}ois Monard}
\address{Department of Mathematics, University of California, Santa Cruz, CA 95064, USA}
\email{fmonard@ucsc.edu}

\author{Gunther Uhlmann}
\address{Department of Mathematics, University of Washington, Seattle, WA 98195-4350, USA. 
Department of Mathematics and statistics University of Helsinki, Box 68, Helsinki, 00014, Finland. 
Institute for Advanced Study, Hong Kong University of Science and Technology, Hong Kong SAR}
\email{gunther@math.washington.edu}
\maketitle

\begin{abstract}
    We present two range characterizations for the attenuated geodesic X-ray transform defined on pairs of functions and one-forms on simple surfaces. Such characterizations are based on first isolating the range over sums of functions and one-forms, then separating each sub-range in two ways, first by implicit conditions, second by deriving new inversion formulas for sums of functions and one-forms.  
\end{abstract}


\section{Introduction}

Let $(M,g)$ be a smooth compact oriented Riemannian surface with boundary $\p M$, with unit tangent bundle $SM:=\{(x,v)\in TM: |v|_{g(x)}=1\}$ and inward/outward boundaries 
\begin{align*}
\p_\pm SM = \{(x,v)\in SM:x\in\p M,\  \pm \<v,\nu_x\>_{g(x)} \ge 0\},
\end{align*}
where $\nu_x$ is the unit inward normal at $x\in \partial M$. Denote $\varphi_t:SM\to SM$ the geodesic flow, written as $\varphi_t(x,v) = (\gamma_{x,v}(t), \dot \gamma_{x,v}(t))$ and defined for $-\tau(x,-v)\le t\le \tau(x,v)$, where $\tau(x,v)$ is the first exit time of the geodesic starting at $(x,v)$. Throughout the paper, we assume that $(M,g)$ is {\em simple}, meaning that the boundary is strictly convex and that any two points on the boundary are joined by a unique minimizing geodesic. In particular, this implies that $(M,g)$ is simply connected and that $\tau(x,v)$ is bounded on $SM$ (i.e., $(M,g)$ is non-trapping). For $a\in C^\infty(M,\C)$, the object of study is the {\em attenuated geodesic ray transform} $I_a:C^\infty(SM) \to C^\infty(\partial_+ SM)$ defined for $f\in C^\infty(SM)$ as 
\begin{align}
    I_a f (x,v) = \int_0^{\tau(x,v)} f(\varphi_t(x,v))\exp \left(\int_0^t a(\gamma_{x,v}(s)\ ds\right)\ dt, \qquad (x,v) \in \partial_+ SM.
    \label{eq:attenuated}
\end{align}
The present article aims at providing range characterizations for this transform over pairs of functions and one-forms, or equivalently, when the integrand $f$ above takes the form $f(x,v) = f_0 (x) + \alpha_x(v)$ for $[f_0,\alpha]$ a pair of a function and a one-form. 
As the transform above models some medical imaging modalities such as Computerized Tomography and Ultrasound Doppler Tomography in media with variable refractive index, range characterizations are useful to project noisy data onto the range of a given measurement operator before inverting for the unknown ($f_0$ or $\alpha$ here). In media with constant refractive index, modelled by the Euclidean metric in the parallel geometry, the problem was extensively studied \cite{Novikov2002,Natterer2001a,BomanStroemberg2004,Finch2004,StroembergAndersson2012}, and the range characterization was already a challenging issue yet to be solved \cite{Novikov2002a}. Recently, range characterizations for the attenuated transform on convex Euclidean domains were provided in terms of Hilbert transforms with respect to A-analytic function theory {\it \`a la} Bukhgeim, treating the case of functions \cite{Sadiq2013}, vector fields \cite{Sadiq2014} and two-tensors \cite{Sadiq2015}, though such results are limited to Euclidean settings as A-analytic function theory has not yet been developed on general surfaces. 

In the case of manifolds with no symmetries, parallel geometry does not exist and one must work with fan-beam coordinates. The scalar case has been studied in \cite{SaU,Monard2015} in the geodesic case, and in \cite{Kazantsev2007} in the Euclidean, fan-beam case, mainly focused on injectivity, stability and inversion procedures.

On to range characterizations, the first one in terms of boundary operators was provided by Pestov and Uhlmann in \cite{PeU2}, later generalized to the case of transport with unitary connection, with further applications to the range characterization of the unattenuated transform over higher-order tensors \cite{PSU3}. Recently in \cite{Monard2016}, the range characterization in \cite{PeU2} was proved by the second author to be a generalization of the classical moment conditions in the Euclidean setting. 

In the approach coming from \cite{PeU2}, there is a boundary operator $P$ which only depends on the scattering relation and the fiberwise Hilbert transform, and which characterizes the unattenuated transform over functions and one-forms. Further splitting of $P$ into the sum $P_+ + P_-$ allows to separate ranges over functions and one-forms. A major challenge in the attenuated case is that, despite the fact that a similar operator exists for the ray transform over pairs (a fact which is one of the first features of this article), the splitting mentioned above is no longer straightforward. We then propose two approaches to separate the sub-ranges within the range over pairs. 
 
The first approach is an implicit description given by adding constraints on the preimage by the $P$ operator above, while the second one relies on inversion formulas for each term of the pair, from the data of both. 

In \cite{Monard2015}, the second author provides inversion formulas for the attenuated ray transforms for functions and vector fields, including one which takes the form of a Fredholm equation, in which the operator may depend on the attenuation coefficient. The formulas presented here allows inversions for pairs (function + one-form) modulo natural obstructions. Moreover, the integrands can be supported up to the boundary. The formulas are exact provided that one can invert the unattenuated transform over functions and solenoidal vector fields. In that regard, the approach does not suffer from whether the attenuation is too low or too high as in \cite{Monard2015}. Additionally, it is generalized to complex-valued attenuations, which requires using both holomorphic and antiholomorphic integrating factors, as in the first inversion procedure presented in \cite{SaU}. An additional tool which is introduced and allows to extract information in a systematic fashion, is a way to turn transport solutions with holomorphic right-hand sides into holomorphic solutions themselves, by manipulating their boundary values. In some sense, this operation is to be understood as a change in the qualitative features of the solutions by ``data'' processing. 

We now state the main results and give an outline of the remainder of the article in the next section.

\section{Statements of main results} \label{sec:main}

In what follows, for $F$ some function space ($C^k$, $L^p$, $H^k$, etc.), we denote by $\cF(M,\C)$ the corresponding space of pairs $[\alpha,f]$ with $\alpha$ a $1$-form and $f$ a function on $M$. In particular, $\cC^\infty(M,\C)$ is the space of pairs $[\alpha,f]$, with $\alpha\in C^\infty(\Lambda^1(M),\C)$ and $f\in C^\infty(M,\C)$. Then $\cI_a$ denotes the restriction of $I_a$ to $\cC^\infty(M,\C)$:
$$
\cI_{a}[\alpha,f](x,v):=I_{a}^1\alpha(x,v)+I_{a}^0f(x,v),\quad (x,v)\in\p_+SM,
$$
where $I_a^1$ and $I_a^0$ are the restrictions of $I_a$ to $1$-forms and functions on $M$, respectively.

Let $X(x,v) = \frac{d}{dt}|_{t=0} \varphi_t(x,v)$ denote the generator of the geodesic flow of $g$, a global section of $T(SM)$. Here and below, for a given $w\in C^\infty(\p_+ SM,\C)$ we denote by $w^\sharp:SM\to \C$ the unique solution to the transport equation
\begin{align*}
    Xw^\sharp+aw^\sharp=0 \quad (SM),\qquad w^\sharp\big|_{\p_+ SM}=w.    
\end{align*}
We then define $Q_a:C(\partial_+ SM, \C) \to C(\partial SM, \C)$, by $Q_a w := w^\sharp|_{\partial SM}$. $Q_a$ takes the expression
\begin{align}
    Q_a w (x,v) = \begin{cases} 
	w(x,v)\quad & (x,v)\in\p_+SM,\\ 
	\exp \left( - \int_{-\tau(x,-v)}^0 a(\gamma_{x,v}(t)) \right) w(\alpha(x,v)) \quad&(x,v)\in\p_-SM,
    \end{cases}
    \label{eq:Qa}
\end{align}
where $\alpha$ denotes the {\em scattering relation}\footnote{Throughout the paper, $\alpha$ may denote either the scattering relation, or a general one-form, though which occurence it is should appear clear from the context.} defined in Section \ref{sec:prelim}. As $Q_a w$ may only be continuous even when $w\in C^\infty(\partial_+ SM)$, we define 
\[ \cS_{a}^\infty(\p_+ SM,\C) := \{ w\in C^\infty(\partial_+ SM, \C),\ Q_a w\in C^\infty(\partial SM)  \}.  \]

We also introduce the operator $B_a:C(\p SM,\C)\to C(\p_+SM,\C)$ by
\begin{align}
    B_{a}u(x,v):=\exp\({\int_0^{\tau(x,v)}a(\gamma_{x,v}(t))\,dt}\)u\circ\alpha(x,v)-u(x,v),\quad (x,v)\in \p_+ SM.  
    \label{eq:Ba}
\end{align}

Next, we introduce the operator $P_{a}:\cS_{a}^\infty(\p_+ SM,\C)\to C^\infty(\p_+ SM,\C)$ defined by $P_{a}:=B_{a} H Q_{a}$, where $H$ is the fiberwise Hilbert transform, defined in Section \ref{sec:prelim}. Clearly the operator $P_{a}$ is completely determined by the scattering relation $\alpha$ and the unattenuated ray transform of $a$. The first main result of the paper is that the operator $P_a$ characterizes the ray transform $\cI_a$ over pairs. 

\begin{Theorem}\label{main} Let $(M,g)$ be a simple surface and let $a\in C^\infty(M,\R)$. Then a function $u\in C^\infty(\p_+ SM,\C)$ belongs to the range of $\cI_{a}$ if and only if $u=P_{a}w$ for some $w\in\cS_{a}^\infty(\p_+ SM,\C)$.
\end{Theorem}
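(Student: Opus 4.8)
The plan is to follow the classical Pestov--Uhlmann strategy adapted to the attenuated setting, exploiting the fact that $P_a = B_a H Q_a$ decomposes the problem into transport-equation solvability plus a commutator computation for the fiberwise Hilbert transform. I would begin with the easy inclusion: suppose $u = \cI_a[\alpha,f]$. Let $f(x,v) = f_0(x) + \alpha_x(v)$ and consider the transport solution $u^f$ to $Xu^f + au^f = -f$ with $u^f|_{\partial_- SM} = 0$; then $u^f|_{\partial_+ SM} = \cI_a[\alpha,f]$ after the standard change of orientation, and more importantly $u^f$ restricted to $\partial SM$ is determined by $u$ via the scattering relation. The key is that $w := u^f|_{\partial_+ SM}$ lies in $\cS_a^\infty$, and that $Q_a w$ and $u^f|_{\partial SM}$ differ by the transport solution with right-hand side $-f$; one then needs the Pestov--Uhlmann identity showing that applying $H$ to such a transport solution and taking the boundary difference $B_a$ recovers exactly $\cI_a$ of the associated pair. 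This rests on the commutator formula $[H, X] = X_\perp \pi_0 + \pi_0 X_\perp$ (Pestov--Uhlmann bracket relation) extended to the attenuated flow, together with the fact that only the degree-$0$ and degree-$\pm 1$ Fourier components of the integrand survive — precisely the function-plus-one-form structure.

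For the harder converse, suppose $u = P_a w = B_a H Q_a w$ for some $w \in \cS_a^\infty(\partial_+ SM,\C)$. Set $W := Q_a w = w^\sharp$, the attenuated transport solution with $XW + aW = 0$ and $W|_{\partial_+ SM} = w$. The goal is to produce a pair $[\alpha,f]$ with $\cI_a[\alpha,f] = u$. I would consider $H W$ and compute $X(HW) + a(HW)$: using the bracket relation, $X(HW) = H(XW) + (X_\perp \pi_0 + \pi_0 X_\perp)W$, and since $XW = -aW$, this gives $X(HW) + a(HW) = -[H,a]W + (X_\perp \pi_0 + \pi_0 X_\perp)W =: -F$ (noting $[H,a] = 0$ since $a$ is a function, i.e. fiberwise constant, so $H$ commutes with multiplication by it — here one uses $a \in C^\infty(M,\R)$ crucially). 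The right-hand side $F$ is then of the form $f_0(x) + \alpha_x(v)$ because $\pi_0$ projects onto degree zero and $X_\perp$ shifts Fourier degree by $\pm 1$. Thus $HW$ is (up to sign) a transport solution for the pair $[\alpha,f]$ driven by $F$, and its boundary values compute $\cI_a[\alpha,f]$. The definition of $B_a$ is exactly engineered so that $B_a(HW) = \cI_a[\alpha,f]$: the scattering-relation term and the attenuation weight in \eqref{eq:Ba} match the comparison between the ``inward'' and ``outward'' restrictions of the transport solution $HW$ along each geodesic.

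The main obstacle I anticipate is the regularity bookkeeping around $\cS_a^\infty$: the operator $Q_a$ produces only continuous functions in general, so one must verify that $HW$ and the resulting integrand $F$ are genuinely smooth (not merely continuous), which is precisely why the domain of $P_a$ is restricted to $\cS_a^\infty$ rather than all of $C^\infty(\partial_+ SM,\C)$; controlling the singularities of $W$ at the glancing region $\partial_0 SM$ and checking they are annihilated after applying $H$ and $B_a$ is the delicate point. A secondary technical issue is making the bracket relation $[H,X]$ precise on $SM$ for a non-trapping simple surface and ensuring the identity $X(HW) + a(HW) = -F$ holds in the interior with $F$ having the claimed Fourier structure; this is where simplicity of $(M,g)$ enters, guaranteeing boundedness of $\tau$ and the absence of conjugate points so that the transport solutions are well-defined globally. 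Once smoothness is in hand, the two inclusions close the characterization.
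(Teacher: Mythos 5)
Your second paragraph --- showing that $P_a w$ belongs to the range of $\cI_a$ --- is essentially the paper's argument and is sound: applying the commutator formula \eqref{[H,X]} to $w^\sharp$ (which satisfies $(X+a)w^\sharp=0$) gives $-(X+a)Hw^\sharp = X_\perp w^\sharp_0 + (X_\perp w^\sharp)_0 = \star d w^\sharp_0 + \tfrac{1}{2}\star d(w^\sharp_{-1}+w^\sharp_1)$, and applying $I_a$ and the adjoint identities \eqref{adjoint of I^0}--\eqref{adjoint of I^1} yields the factorization $-2\pi P_a w = \cI_a\left[\begin{smallmatrix}0&\star d\\ \star d&0\end{smallmatrix}\right]\cI_{-\overline a}^* w$, whose right-hand side is manifestly in the range of $\cI_a$.

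The genuine gap is in what you call the ``easy inclusion,'' which is in fact the hard one. Your candidate $w := u^f|_{\p_+ SM}$ is nothing but the data $u$ itself, and there is no identity forcing $P_a u$ to equal $u$ (or a multiple of it): the ``Pestov--Uhlmann identity'' you invoke is precisely the factorization above, and it shows $-2\pi P_a w = \cI_a[\star d(2\pi w^\sharp_0),\,\star d(\pi(w^\sharp_{-1}+w^\sharp_1))]$ --- i.e.\ $\cI_a$ of a pair built from the Fourier modes of $w^\sharp$, not of the original pair $[\alpha,f_0]$. To realize a given $u=\cI_a[\alpha,f_0]$ as $P_a w$, one must first rewrite $\cI_a[\alpha,f_0]=\cI_a[\star dh,\star d\beta]$ modulo $\ker\cI_a$ (using that $M$ is simply connected, so e.g.\ $f_0\,d\Vol_g=d\beta$ for a solenoidal $\beta$), and then \emph{solve} $\cI_{-\overline a}^*w=[\beta,h]$ with $[\beta,h]$ a smooth $(-\overline a)$-solenoidal pair. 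This is a surjectivity statement for the adjoint, and it is the paper's main technical result (Theorem \ref{main surjectivity result}), proved by extending to a slightly larger simple surface, showing the normal operator $\widetilde{\cN}_{a}$ is an elliptic pseudodifferential operator of order $-1$ on $a$-solenoidal pairs, constructing a parametrix together with compactly supported solenoidal extensions, and invoking the solenoidal injectivity of the attenuated transform from Salo--Uhlmann. None of this machinery, nor any substitute for it, appears in your proposal, so the inclusion $\mathrm{Range}\,\cI_a\subset\mathrm{Range}\,P_a$ remains unproved. (A secondary issue: the claim that your $w$ lies in $\cS_a^\infty(\p_+SM,\C)$ is unjustified --- $Q_a w$ is the boundary trace of the \emph{homogeneous} transport solution, not of $u^f$, so its smoothness at the glancing region would also require an argument.)
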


As mentioned in the introduction, the corresponding operator $P:= P_0$, first introduced in \cite{PeU2}, splits into two operators $P_+$ and $P_-$ which characterize the ranges of $I^0$ and $I^1$ separately. In the attenuated case, such a splitting is no longer obvious. Sitting within the range of $\cI_a$, a first range characterization for $I_a^0$ and $I_a^1$ can be obtained by adding conditions on the preimage by $P_a$. Before stating the result, we introduce some notations.

Since $M$ is oriented there is a circle action on the fibres of $SM$ with infinitesimal generator $V$ called the vertical vector field. For any two functions $u,v:SM\to\C$ define an inner product:
$$
\langle u,v\rangle_{L^2(SM)}=\int_{SM} u\overline v\,d\Sigma^3,
$$
where $d\Sigma^3$ is the Liouville measure of $g$ on $SM$. The space $L^2(SM,\C)$ decomposes orthogonally as a direct sum
$$
L^2(SM,\C)=\bigoplus_{k\in\mathbb Z}H_k
$$
where $H_k$ is the eigenspace of $-iV$ corresponding to the eigenvalue $k$. Any function $u\in C^{\infty}(SM,\C)$ has a Fourier series expansion
$$
u=\sum_{k=-\infty}^{\infty}u_k, \qquad u_k\in \Omega_k:=C^{\infty}(SM,\C)\cap H_k.
$$
In particular, $u\mapsto u_0$ and $u\mapsto u_{-1}+u_1$ are the projections of functions on $SM$ onto functions and $1$-forms on $M$, respectively; see Section~\ref{3.2}.

\begin{Theorem}\label{thm:charac1}
Let $(M,g)$ be a simple surface and let $a\in C^\infty(M,\C)$. The following range characterizations hold:
\begin{enumerate}
    \item A function $u\in C^\infty(\p_+ SM,\C)$ belongs to the range of $I^0_{a}$ if and only if $u=P_{a}w$ for some $w\in\mathcal S_{a}^\infty(\p_+ SM,\C)$ such that $w^\sharp_0=0$.
    \item A function $u\in C^\infty(\p_+ SM,\C)$ belongs to the range of $I^1_{a}$ acting on solenoidal one-forms if and only if $u=P_{a}w$ for some $w\in\mathcal S_{a}^\infty(\p_+ SM,\C)$ such that $w^\sharp_{-1}+w^\sharp_{1}=dp$ for some $p\in C^\infty(M,\C)$.
\end{enumerate}
\end{Theorem}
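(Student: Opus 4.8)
The plan is to build on Theorem \ref{main}, which already identifies the full range of $\cI_a$ as $P_a(\cS_a^\infty(\p_+SM,\C))$, and to trace through its proof to see which preimage $w$ produces a \emph{pure} function versus a \emph{pure} (solenoidal) one-form. The key observation is the standard correspondence between the ray transform and the transport equation: if $u=\cI_a[\alpha,f]$ with integrand $f_0(x)+\alpha_x(v)$ and attenuation $a$, then the function $v^\sharp$ defined by the backward transport problem $Xv^\sharp+av^\sharp = -(f_0+\alpha)$ on $SM$ with $v^\sharp|_{\p_-SM}=0$ has $v^\sharp|_{\p_+SM}=u$. Writing $w:=u$ and comparing with the homogeneous solution $w^\sharp$ (where $Xw^\sharp+aw^\sharp=0$, $w^\sharp|_{\p_+SM}=w$), the difference $w^\sharp-v^\sharp$ solves the same transport equation with right-hand side $f_0+\alpha$, so the Fourier content of the integrand is read off from the low-degree harmonics of $w^\sharp$. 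Concretely, I expect that $w^\sharp_0$ encodes (a multiple of) $f_0$ together with a contribution from $\delta$-type terms, and $w^\sharp_{-1}+w^\sharp_1$ encodes the one-form $\alpha$ together with $df_0$; the clean statements $w^\sharp_0=0$ and $w^\sharp_{-1}+w^\sharp_1 = dp$ are exactly the conditions that kill the function part, respectively force the one-form part to be the exact form making a given pair cohomologous to a pure solenoidal one-form.

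First I would set up the forward direction of (1): given $u = I^0_a f_0$, apply Theorem \ref{main} with the pair $[0,f_0]$ to get $u=P_a w$ for a suitable $w\in\cS_a^\infty$, and then verify directly that this $w$ — produced in the proof of Theorem \ref{main} as a boundary value built from the transport solution with right-hand side $-f_0$ — satisfies $w^\sharp_0 = 0$. This should follow from the fact that a pure function integrand contributes only to even/odd-symmetric parts in a way orthogonal to $\Omega_0$ after the relevant adjustment; I would make this precise using the decomposition of $u^\sharp$ into Fourier modes and the structure of $X = \eta_+ + \eta_-$ (raising/lowering operators) on $SM$, together with the fact that $\eta_+ u_{-1} + \eta_- u_1$ on the diagonal recovers $df_0$. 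For the converse, suppose $u = P_a w$ with $w^\sharp_0 = 0$; by Theorem \ref{main} we know $u=\cI_a[\alpha,f]$ for some pair, and I must show $\alpha$ can be taken to be zero, i.e. the one-form part is actually trivial (or absorbed). Here I would use the established relation between the preimage $w$ and the pair: the condition $w^\sharp_0=0$ forces the function component of the integrand to vanish modulo the gauge freedom $[\alpha,f_0]\mapsto [\alpha - dp, f_0]$ (no, more carefully: the gauge is $[\alpha,f]\mapsto[\alpha+dp,f]$ with $p|_{\p M}=0$ acting trivially on $I_a$ only in the unattenuated case — in the attenuated case the relevant gauge involves the attenuated potential), so I must check that $I^0_a$ and $I^1_a$ genuinely have complementary ranges inside that of $\cI_a$ at the level of these Fourier conditions.

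For part (2) the argument is parallel but with two twists. The forward direction: given $u = I^1_a\alpha$ with $\alpha$ solenoidal, Theorem \ref{main} gives $u = P_a w$ and I check $w^\sharp_{-1}+w^\sharp_1 = dp$ — here $p$ will be the (interior) potential arising because the one-form integrand, while solenoidal, still generates a nonzero gradient contribution in the degree-$(\pm 1)$ part of the transport solution; solenoidality of $\alpha$ is what guarantees this degree-$(\pm1)$ part is \emph{exact} rather than a general one-form. The converse: if $w^\sharp_{-1}+w^\sharp_1=dp$, then $u=\cI_a[\alpha,f]$ and I use the condition to show $f$ can be gauged away and $\alpha$ replaced by its solenoidal part. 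The main obstacle throughout is that, unlike the unattenuated case where $P$ splits cleanly as $P_++P_-$, here there is \emph{no} such splitting (as the authors emphasize), so I cannot simply project; instead every identification of ``which part of the integrand'' must be done by carefully analyzing the low Fourier modes of the transport solution $w^\sharp$ and invoking injectivity of the unattenuated transform over functions and over solenoidal vector fields (valid on simple surfaces) to pin down the decomposition uniquely. Getting the bookkeeping of the gauge freedom $[\alpha,f]\sim[\alpha+dp', f]$ consistent with the conditions on $w^\sharp_0$ and $w^\sharp_{-1}+w^\sharp_1$ — and in particular checking the boundary behavior of the potential $p$ so that $w$ stays in $\cS_a^\infty$ — is where I expect the real work to lie.
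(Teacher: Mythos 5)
Your proposal heads in a direction that does not match the mechanism the paper actually uses, and it contains a concrete error in the dictionary between the Fourier modes of $w^\sharp$ and the two components of the integrand. The paper's proof rests entirely on the factorization \eqref{final equation},
\[
-2\pi P_a w = \cI_a\bigl[\star d\, (I^0_{-\overline a})^*(w),\ \star d\, (I^1_{-\overline a})^*(w)\bigr],
\]
together with the identities $(I^0_{-\overline a})^*(w)=2\pi w^\sharp_0$ and $(I^1_{-\overline a})^*(w)=\pi(w^\sharp_{-1}+w^\sharp_1)$. Because the matrix $\left[\begin{smallmatrix}0&\star d\\ \star d&0\end{smallmatrix}\right]$ is off-diagonal, the correspondence is the opposite of what you assert: $w^\sharp_0=0$ kills the \emph{one-form} slot of the pair (leaving $P_a w$ in the range of $I^0_a$), while $w^\sharp_{-1}+w^\sharp_1=dp$ kills the \emph{function} slot since $\star d(dp)=0$ (leaving $P_a w$ in the range of $I^1_a$ acting on forms of type $\star d(\cdot)$, which are automatically solenoidal). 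Your claim that ``$w^\sharp_0$ encodes $f_0$'' and that the two conditions ``kill the function part, respectively force the one-form part to be exact'' is backwards, and an argument built on that dictionary would end up attaching each hypothesis to the wrong sub-range. Relatedly, you set $w:=u$ and propose to analyze $w^\sharp-v^\sharp$; but the $w$ in the statement is a $P_a$-preimage of $u$, not the data itself, and its relation to the integrand goes through the adjoint $\cI_{-\overline a}^*$, not through the transport solution with source $f_0+\alpha$.

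The second gap is that you never identify the ingredient that makes the ``only if'' directions work. Theorem \ref{main} only provides \emph{some} preimage $w$; it gives no control on $w^\sharp_0$ or on $w^\sharp_{-1}+w^\sharp_1$, so ``tracing through its proof'' does not produce the constrained preimage you need. What the paper uses is the surjectivity of $\cI_{-\overline a}^*$ onto $a$-solenoidal pairs (Theorem \ref{main surjectivity result} and Corollary \ref{main surjectivity result for solenoidal form}): for claim (1) one writes $f=\star d\alpha$ with $\alpha$ solenoidal (using simple connectivity) and chooses $w$ with $(I^1_{-\overline a})^*(w)=\alpha$ and $(I^0_{-\overline a})^*(w)=0$, which forces $w^\sharp_0=0$; for claim (2) one constructs a $(-\overline a)$-solenoidal pair $[\alpha,\varphi]$ with $\alpha=-dh$ exact and applies the surjectivity theorem, which forces $w^\sharp_{-1}+w^\sharp_1$ to be exact. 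With the factorization in hand, the ``if'' directions are immediate and require no gauge bookkeeping and no appeal to injectivity of the unattenuated transform; without it, the low-Fourier-mode analysis you sketch has no mechanism to close either implication.
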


We now derive reconstruction formulas for pairs, which in turn yield a second range characterization. Since the transform over pairs $\cI_a$ has a kernel (namely, the ``$a$-potential'' pairs), it is first useful to change its domain in such a way which makes it injective without altering its range. To this end, we make our way in Theorem \ref{thm:secondrep} and Lemma \ref{lem:decomp} and \ref{lem:4decomp}, into proving that any element $\cD \in \text{Range }\cI_a$ decomposes uniquely as 
\begin{align*}
    \cD = \cI_a [\star d h_0 + \omega_1 + \omega_{-1}, f] = I_a^0 f + I_a^\perp h_0 + I_a^{+1} \omega_1 + I_a^{-1} \omega_{-1},
\end{align*}
with $f\in C^\infty(M)$, $h_0 \in C^\infty_0 (M)$ and $\omega_{\pm 1} \in \ker^{\pm 1} \eta_\mp$ are holomorphic and anti-holomorphic one-forms. Moreover, $\cD = 0$ if and only if $f, h_0, \omega_1$ and $\omega_{-1}$ vanish identically. This suggests that the quadruple $(f,h_0,\omega_1,\omega_{-1})$ can be reconstructed from $\cD$, and we proceed to provide reconstruction formulas for each term in Section \ref{sec:inversion}. We first reconstruct $\omega_1$ and $\omega_{-1}$ from $\cD$ in Theorem \ref{thm:holoterms}, and in turn, explain how to remove $\cI_a[\omega_1+\omega_{-1}, 0]$ from $\cD$. This is done using Hilbert bases of square integrable harmonic one-forms, combined with integration by parts on $SM$ using appropriate adjoint transport solutions with one-sided fiber-harmonic content. In fact, the reconstruction of the terms $\omega_{\pm 1}$ is new even in the unattenuated case, for which the first inversion formulas for one-forms appearing in \cite{PeU2} only treated one-forms $\omega=\star d h$ with $h|_{\partial M} = 0$.

After reconstructing $\omega_1$ and $\omega_{-1}$, it remains to reconstruct $(f,h_0)$ from $\cI_a[\star dh_0,f]$. As a means to obtain exact reconstruction formulas (i.e., not up to Fredholm errors), we first construct in section \ref{sec:holomorphization} a ``holomorphization operator'' $\Bh:C^\infty(\partial SM) \to C^\infty(\partial_+ SM)$ (see Theorem \ref{thm:holomorphization} for details) such that if the equation $Xu = -f$ holds with $f$ holomorphic, then the function $\uh = u - (\Bh (u|_{\partial SM})_\psi)$ is a holomorphic solution of $X\uh = -f$ with $\uh_0$ constant. An antiholomorphization counterpart $\Ba$ is also defined there. Such operators, which allow to extract holomorphic and antiholomorphic contents at will, together with the use of so-called holomorphic and anti-holomorphic integrating factors first defined in \cite{SaU}, are key to deriving the following reconstruction formulas, which we prove in Section \ref{sec:recons2}. See Section \ref{3.2} for a definition of the Guillemin-Kazhdan operators $\eta_\pm$ appearing below. 

\begin{Theorem}\label{thm:intro} Let $(M,g)$ a simple surface and $a\in C^\infty(M,\C)$. Define $\wh$ and $\wa$ smooth holomorphic and antiholomorphic, odd, solutions of $X\wh = X\wa = -a$, and let $\Bh$ and $\Ba$ as in Theorem \ref{thm:holomorphization} and Corollary \ref{cor:antiholomorphization}. Then the functions $(h_0,f)\in C^\infty_0(M)\times C^\infty(M)$ can be reconstructed from data $\cI := \cI_{a} [\star dh_0,f]$ (extended by zero on $\partial_- SM$) via the following formulas:
    \begin{align*}
	f &= -\eta_+ \Dh_{-1} - \eta_- \Da_1 - \frac{a}{2} \left( \Dh_0 + \Da_0 + i(g_+ - g_-) \right), \\
	h_0 &= \frac{1}{2} (g_+ + g_-) - \frac{i}{2} (\Dh_0 - \Da_0),
    \end{align*}
    where we have defined $\Dh := e^\wh (\Bh(\cI e^{-\wh}|_{\partial SM}))_\psi$, $\Da := e^\wa (\Ba(\cI e^{-\wa}|_{\partial SM}))_\psi$, and where $g_\pm \in \ker^0 \eta_\pm$, uniquely characterized by their boundary conditions
    \begin{align*}
	g_+|_{\partial M} = -i (\cI - \Dh|_{\partial SM})_0, \qquad g_-|_{\partial M} = i (\cI - \Da|_{\partial SM})_0. 
    \end{align*}
\end{Theorem}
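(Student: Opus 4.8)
The plan is to reconstruct the quadruple $(f,h_0,\omega_1,\omega_{-1})$ in stages, but since Theorem~\ref{thm:intro} assumes the holomorphic/antiholomorphic one-forms have already been removed (i.e., $\cI = \cI_a[\star d h_0, f]$ with $h_0\in C_0^\infty(M)$), the task reduces to inverting the transform on the pair $[\star d h_0, f]$. The driving idea is the classical one: lift $\cI$ to a transport solution on $SM$ via integrating factors, then use the fiberwise Fourier decomposition together with the Guillemin--Kazhdan operators $\eta_\pm$ to peel off the Fourier modes carrying $f$ and $h_0$. The novelty here is the use of \emph{both} a holomorphic integrating factor $e^{\wh}$ and an antiholomorphic one $e^{\wa}$ (needed because $a$ is complex-valued), and the use of the holomorphization operator $\Bh$ (resp. $\Ba$) from Theorem~\ref{thm:holomorphization} to convert an arbitrary transport solution with holomorphic (resp. antiholomorphic) right-hand side into one whose fiber-harmonic content is one-sided and whose zeroth mode is constant.

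First I would set $w := w^\sharp$ where $Xw + aw = -(\star d h_0)(\cdot) - f$ on $SM$ with $w|_{\partial_+SM}$ recovered from $\cI$ (this is the standard ``transport-solution from boundary data'' construction; $w|_{\partial SM}$ is $Q_a$ applied to suitable boundary data, and $\cI$ extended by zero on $\partial_-SM$ encodes exactly $w|_{\partial SM}$ up to the known attenuation factor). Conjugating by $e^{\mp\wh}$ and $e^{\mp\wa}$ turns $w$ into $u^{\rightharpoonup} := e^{\wh} w$ solving $X u^{\rightharpoonup} = -e^{\wh}\big(\star d h_0 + f\big)$ and similarly $u^{\leftharpoonup} := e^{\wa} w$; the point is that $e^{\wh}$ is holomorphic and $e^{\wa}$ antiholomorphic, so $\eta_-(e^{\wh})_0 = 0$ etc., and the right-hand sides have controlled one-sided Fourier support once combined with the fact that $\star d h_0 + f$ lives in modes $-1,0,1$. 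Applying $\Bh$ to $(u^{\rightharpoonup}|_{\partial SM})_\psi$ and subtracting produces the genuinely holomorphic solution; $\Dh := e^{\wh}(\Bh(\cI e^{-\wh}|_{\partial SM}))_\psi$ is precisely (a reindexed version of) the boundary correction term, and $\cI e^{-\wh} - \Dh e^{-\wh}$ — equivalently $\cI - \Dh|_{\partial SM}$ after stripping the integrating factor — is the boundary value of a holomorphic transport solution. Reading off Fourier modes: the mode $\Dh_{-1}$ feeds into $f$ through $\eta_+$, the mode $\Dh_0$ together with $\Dh_0^{\leftharpoonup}$ recombine (taking the right linear combination of the holomorphic and antiholomorphic pieces) to give $h_0$ and to supply the ``$a/2$'' correction term in $f$; the functions $g_\pm \in \ker^0\eta_\pm$ are the holomorphic/antiholomorphic extensions of the traces $-i(\cI-\Dh|_{\partial SM})_0$ and $i(\cI - \Da|_{\partial SM})_0$, which exist and are unique on a simple surface because $\ker^0\eta_+$ is exactly the space of holomorphic functions and the Poisson-type problem $\eta_+ g_+ = 0$, $g_+|_{\partial M}$ prescribed is uniquely solvable.

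The key computational step is the bookkeeping that matches Fourier modes across the two equations $X u^{\rightharpoonup} = -e^{\wh}(\star d h_0 + f)$ and $X u^{\leftharpoonup} = -e^{\wa}(\star d h_0 + f)$: one writes $Xu = (\eta_+ + \eta_-)u$ acting mode by mode, uses that a holomorphic solution has only nonnegative modes (antiholomorphic: nonpositive), and that $\star d h_0 = \eta_+ h_0\, e_1$-type contributions land in modes $\pm1$ while $f$ lands in mode $0$. Extracting mode $-1$ from the holomorphized $u^{\rightharpoonup}$-equation isolates a term of the form $\eta_+(\text{mode }{-1})$; extracting mode $1$ from the antiholomorphized side isolates $\eta_-(\text{mode }1)$; adding these and then adjusting for the mode-$0$ overlap (which is where the $-\tfrac a2(\Dh_0+\Da_0+i(g_+-g_-))$ term comes from, the $g_\pm$ absorbing the boundary contributions that the holomorphization did not kill) yields the stated formula for $f$, and the mode-$0$ equation itself, symmetrized between holomorphic and antiholomorphic versions, yields $h_0 = \tfrac12(g_++g_-) - \tfrac i2(\Dh_0 - \Da_0)$.

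The main obstacle I anticipate is twofold. First, verifying that the objects $\Dh$ and $\Da$ are well-defined and smooth — this rests on $\cI e^{-\wh}|_{\partial SM}$ lying in the domain of $\Bh$, i.e., in $\cS_a^\infty$-type regularity, and on the boundary values behaving correctly under the $(\cdot)_\psi$ operation; this is where Theorem~\ref{thm:holomorphization} and Corollary~\ref{cor:antiholomorphization} do the heavy lifting and one must check their hypotheses are met (in particular that the right-hand side of the relevant transport equation really is holomorphic/antiholomorphic, which uses $Xe^{\wh}w = e^{\wh}(Xw + aw)$ and the one-sided structure of $\star dh_0 + f$). Second, and more delicate, is confirming the \emph{uniqueness/consistency} of the $g_\pm$: one needs that the prescribed boundary traces $(\cI - \Dh|_{\partial SM})_0$ and $(\cI - \Da|_{\partial SM})_0$ are real/compatible enough that the holomorphic extension problems are solvable and that the two formulas for the same object $h_0$ (one could derive it from the holomorphic side, one from the antiholomorphic side) actually agree — this consistency is essentially equivalent to $\cI$ being in the range of $\cI_a$, so it should follow from Theorem~\ref{main}, but making that precise and circular-reasoning-free is the subtle point. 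Everything else — the mode-by-mode extraction, the commutation of $\eta_\pm$ with multiplication by integrating factors up to lower-order terms, the identification of $\ker^0\eta_\pm$ with (anti)holomorphic functions — is routine given the machinery of Sections~\ref{3.2}, \ref{sec:holomorphization}, and the decomposition results Theorem~\ref{thm:secondrep}, Lemmas~\ref{lem:decomp} and \ref{lem:4decomp}.
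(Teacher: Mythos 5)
Your proposal follows essentially the same route as the paper's proof: conjugate the transport equation by the holomorphic and antiholomorphic integrating factors $e^{-\wh}$, $e^{-\wa}$, apply $\Bh$ and $\Ba$ to produce holomorphic and antiholomorphic solutions $\uh = u - \Dh$ and $\ua = u - \Da$ of the same attenuated equation, then read off $h_0$ and $f$ from the Fourier modes $-1,0,1$, with $g_\pm := h_0 \mp i(\cdot)_0 \in \ker^0\eta_\pm$ pinned down by their boundary traces since $h_0|_{\partial M}=0$. The one point where your framing differs slightly is the "consistency" worry about $g_\pm$: in the paper this is a non-issue because $g_\pm$ are constructed directly in the interior as $h_0 - i\uh_0$ and $h_0 + i\ua_0$ (so existence is automatic and only uniqueness from boundary values is needed), rather than posed as an a priori overdetermined holomorphic extension problem.
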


The reconstruction formulas above then allow to construct in \eqref{eq:Ppm1} and \eqref{eq:P0perp} explicit linear, idempotent operators $P_{a,0}, P_{a,\perp}, P_{a,\pm 1}:\text{Range }\cI_a\to \text{Range }\cI_a$, such that 
\begin{align*}
  P_{a,\pm 1} \cD = I_a^{\pm 1} \omega_{\pm 1}, \qquad P_{a,0} \cD = I_a^0 f, \qquad P_{a,\perp} \cD = I_a^\perp h_0.
\end{align*}

Such operators allow to establish the following range characterization:

\begin{Theorem}\label{thm:charac2} Let $(M,g)$ a simple surface and let $a\in C^\infty(M,\C)$. Then the following hold: 
    \begin{enumerate}
	\item[$(i)$] A function $u\in C^\infty(\partial_+ SM, \C)$ belongs to the range of $I_a^0$ if and only if $u = P_a w$ for some $w\in \cS_a^\infty (\partial_+ SM,\C)$ and $P_{a, 1} u = P_{a,-1} u = P_{a,\perp} u = 0$. 
	\item[$(ii)$] A function $u\in C^\infty(\partial_+ SM, \C)$ belongs to the range of $I_a^1$ acting on solenoidal one-forms if and only if $u = P_a w$ for some $w\in \cS_a^\infty (\partial_+ SM,\C)$ and $P_{a,0} u = 0$. 
    \end{enumerate}    
\end{Theorem}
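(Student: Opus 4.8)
The plan is to derive Theorem~\ref{thm:charac2} as a fairly direct consequence of Theorem~\ref{main}, the unique decomposition of elements of $\text{Range }\cI_a$ (Theorem~\ref{thm:secondrep}, Lemmas~\ref{lem:decomp}, \ref{lem:4decomp}), and the properties of the idempotent operators $P_{a,0}, P_{a,\perp}, P_{a,\pm1}$ constructed via the reconstruction formulas of Theorem~\ref{thm:intro}. The key structural fact I would use is that these four operators are mutually "orthogonal" projectors summing to the identity on $\text{Range }\cI_a$: for any $\cD = I_a^0 f + I_a^\perp h_0 + I_a^{+1}\omega_1 + I_a^{-1}\omega_{-1}$ in the range, one has $P_{a,0}\cD = I_a^0 f$, $P_{a,\perp}\cD = I_a^\perp h_0$, $P_{a,\pm1}\cD = I_a^{\pm1}\omega_{\pm1}$, and the four summands are recovered uniquely by the uniqueness part of the decomposition. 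Note the decomposition splits $I_a^1$ acting on \emph{all} one-forms into a solenoidal-like piece $I_a^\perp h_0$ with $h_0 \in C^\infty_0(M)$ and the holomorphic/antiholomorphic pieces $I_a^{\pm1}\omega_{\pm1}$; the relation between $I_a^1$ on solenoidal one-forms and these three terms is what I would need to pin down carefully, using the Hodge-type decomposition of one-forms on the simply connected surface $M$ (every one-form is $\star dh + dp$, and $dp$ contributes to the $a$-potential kernel once composed appropriately, while $\star dh$ decomposes further into $\star dh_0$ with $h_0|_{\partial M}=0$ plus holomorphic and antiholomorphic harmonic pieces).

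For part $(i)$: if $u$ is in the range of $I_a^0$, then $u = \cI_a[0,f]$ for some $f$, so $u = P_a w$ for some $w \in \cS_a^\infty(\partial_+ SM,\C)$ by Theorem~\ref{main}, and in the unique decomposition of $u$ as an element of $\text{Range }\cI_a$ we have $h_0 = 0$, $\omega_1 = \omega_{-1} = 0$, whence $P_{a,\perp} u = P_{a,1} u = P_{a,-1} u = 0$. Conversely, if $u = P_a w$ then $u \in \text{Range }\cI_a$ by Theorem~\ref{main}, so $u = I_a^0 f + I_a^\perp h_0 + I_a^{+1}\omega_1 + I_a^{-1}\omega_{-1}$; the three vanishing conditions force $I_a^\perp h_0 = I_a^{+1}\omega_1 = I_a^{-1}\omega_{-1} = 0$, and by the injectivity built into the decomposition (the statement "$\cD = 0$ iff $f,h_0,\omega_1,\omega_{-1}$ all vanish", applied to each summand) we get $h_0 = 0$, $\omega_{\pm1} = 0$, so $u = I_a^0 f$. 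Part $(ii)$ is analogous: an element of the range of $I_a^1$ on solenoidal one-forms has $f = 0$ in its decomposition, so $P_{a,0}u = 0$; conversely $P_{a,0}u = 0$ gives $I_a^0 f = 0$ hence $f = 0$, so $u = \cI_a[\star dh_0 + \omega_1 + \omega_{-1}, 0]$, and one checks that the one-form $\star dh_0 + \omega_1 + \omega_{-1}$ is solenoidal (its codifferential vanishes because $h_0 \in C^\infty_0$ makes $\star dh_0$ solenoidal, and harmonic one-forms are solenoidal), and that conversely every solenoidal one-form arises this way up to the $a$-potential kernel that $\cI_a$ quotients out.

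The main obstacle I anticipate is the bookkeeping in part $(ii)$ around what "$I_a^1$ acting on solenoidal one-forms" means relative to the decomposition: one must verify that the three components $\star dh_0$ (with $h_0$ vanishing on $\partial M$), $\omega_1$, and $\omega_{-1}$ together span exactly the solenoidal one-forms modulo the kernel of $\cI_a$, and that no genuinely non-solenoidal one-form can sneak in when $f=0$ — i.e. that the "$dp$" part of a general one-form is absorbed into the $f$-slot or the kernel, never into the three retained slots. This requires invoking the precise statements of Theorem~\ref{thm:secondrep} and Lemmas~\ref{lem:decomp}, \ref{lem:4decomp}; granting those, the argument is essentially the linear-algebra of four commuting idempotents. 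A secondary point to handle is smoothness/consistency: one must ensure that when $u = P_a w$, the reconstructed data $(f,h_0,\omega_1,\omega_{-1})$ are genuinely smooth so that the intermediate transforms $I_a^0 f$, etc., make sense as elements of $C^\infty(\partial_+SM,\C)$ — but this is guaranteed by the mapping properties recorded in Theorem~\ref{thm:intro} and the construction of the $P_{a,\bullet}$ operators.
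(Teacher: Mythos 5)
Your proposal is correct and follows exactly the route the paper intends: the theorem is presented there as a direct consequence of Theorem~\ref{main}, the unique decomposition of Lemma~\ref{lem:4decomp}, and the idempotent operators $P_{a,0},P_{a,\perp},P_{a,\pm1}$ built from the reconstruction formulas, with no separate written proof. Your handling of the one subtlety in part $(ii)$ is also the right one --- on a simply connected surface the solenoidal one-forms are precisely the $\star dh$, which Lemma~\ref{lem:decomp} splits into exactly the three retained slots $\star dh_0+\omega_1+\omega_{-1}$, so the $f=0$ condition characterizes that sub-range.
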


This characterization is of practical relevance as it allows to project noisy data onto the range of $I_a^0$ or $I_a^1$ acting on solenoidal one-forms using explicit operators, before inversion.\medskip

\noindent{\bf Outline and roadmap of proofs.} We first study the space of pairs [one-form, function] in Section \ref{sec:pairs}, on which the operator $\cI_a$ is defined. Proving Theorem \ref{main} is based on the factorization $-2\pi P_a = \cI_a \left[\begin{smallmatrix} 0 & \star d \\ \star d & 0 \end{smallmatrix}\right] \cI_{-\overline{a}}^*$, which completes the proof once the surjectivity of $\cI_{-\overline{a}}^*$ is proved in appropriate functional settings. Such a surjectivity mainly relies on the injectivity of $\cI_{-\overline{a}}$ (\cite[Theorem 1.2]{SaU}), and is based on pseudodifferential arguments on a slightly extended surface. Theorem \ref{thm:charac1} then follows by finding the appropriate additional conditions which characterize each sub-range. 

On to the proof of Theorem \ref{thm:charac2}, we first explain in Section \ref{sec:viewpoints} how to change the domain $\cI_a$ in a way which makes it injective, in particular via the mapping $(f,h_0,\omega_1,\omega_{-1})\mapsto \cI_a[\star d h_0 + \omega_1 + \omega_{-1},f]$. Section \ref{sec:inversion} then explains how to reconstruct each term: we first reconstruct $\omega_1$ and $\omega_{-1}$ in Section \ref{sec:recons1}; then introduce holomorphization operators in Section \ref{sec:holomorphization}; finally, we provide reconstruction formulas for $(f,h_0)$ in Section \ref{sec:recons2}. In both sections \ref{sec:recons1} and \ref{sec:recons2}, we explain the implications of such inversions on the ability to construct projection operators for Theorem \ref{thm:charac2}. 

\section{Preliminaries}\label{sec:prelim}

\subsection{Scattering relation and transport equations}\label{2.1}
Recall that for $(x,v)\in SM$, $\tau(x,v)$ denotes the first non-negative exit time $\tau(x,v)$ of the geodesic $\gamma_{x,v}$, with $x=\gamma_{x,v}(0)$, $v=\dot\gamma_{x,v}(0)$. The {\it scattering relation} is the map $\alpha:\p SM \to\p SM $ defined as
$$
\alpha(x,v) = \varphi_{\pm \tau(x,\pm v)}(x,v), \qquad (x,v)\in \partial_{\pm} SM,
$$
Since $(M,g)$ is assumed to be simple, by \cite[Lemma~4.1.1]{Sh} we conclude that the scattering relation $\alpha$ is diffeomorphism and $\alpha^2=\id$.

The attenuated ray transform \eqref{eq:attenuated} can be realized as the trace on $\partial_+ SM$ of the solution $u:SM\to \C$ to the following transport equation on $SM$,
$$
Xu+au=-f\quad (SM),\qquad u|_{\p_-SM}=0,
$$
where $f\in C^\infty(SM)$ represents the ``source term''. This equation has a unique solution $u^f$, since on any fixed geodesic the transport equation is an ODE with zero initial condition and an integral expression gives us that $u|_{\partial_+ SM}$ matches \eqref{eq:attenuated}. For $w\in C^\infty(\p_+SM,\C^n)$ given, let us denote $w_\psi (x,v) := w(\varphi_{-\tau(x,-v)}(x,v))$ the unique solution $u$ to the transport problem
$$
Xu=0 \quad (SM),\qquad u\big|_{\p_+SM}=w.
$$
For $a\in C^\infty(M,\C)$, define the integrating factor $U_{a}:SM\to\C$, unique solution to 
$$
(X+a)U_a=0\quad (SM),\qquad U_{a}|_{\p_+SM}=1,
$$
whose integral expression is given by
$$
U_{a}(x,v)=\exp\(-\int^0_{-\tau(x,-v)}a(\gamma_{x,v}(s))\,ds\),\quad (x,v)\in SM.
$$
By solving explicitly the transport equation along the geodesic, one can show that
$$
U_{a}(\varphi_t(x,v))=\exp\({-\int_0^t a(\gamma_{x,v}(s))\,ds}\),\quad (x,v)\in SM,
$$
and hence the following integral formula holds:
$$
I_{a}f(x,v)=\int_0^{\tau(x,v)}U_{a}^{-1}(\varphi_t(x,v))f(\varphi_t(x,v))\,dt,\quad (x,v)\in\p_+SM. 
$$
With the $U_a$ notation, notice that the function $w^\sharp$ defined in Section \ref{sec:main} is nothing but $w^\sharp(x,v)=U_{a}(x,v)w_\psi(x,v)$, and $Q_a$ defined in \eqref{eq:Qa} takes the expression
$$
Q_{a}w(x,v):=\begin{cases}
w(x,v)\quad&(x,v)\in\p_+SM,\\
U_{a}(x,v)(w\circ\alpha)(x,v)\quad&(x,v)\in\p_-SM.
\end{cases}
$$
The space of those $w$ for which $w^\sharp$ is smooth in $SM$ is denoted by
\begin{align*}
    \cS_{a}^\infty(\p_+SM,\C) &:= \{w\in C^\infty(\p_+ SM,\C):w^\sharp\in C^\infty(SM,\C)\} \\
    &= \{w\in C^\infty(\p_+ SM,\C):Q_{a}w\in C^\infty(\partial(SM),\C)\}, 
\end{align*}
where the second equality is a characterization in terms of the operator $Q_{a}$, proved in \cite[Lemma~5.1]{PSU1}.

Another characterization of the $B_a$ operator defined in \eqref{eq:Ba} is that, for any smooth function $\psi:SM\to \C$, we have
\begin{align*}
I_{a}((X+a)\psi)(x,v)&=\exp\({\int_0^{\tau(x,v)}a(\gamma_{x,v}(t))\,dt}\)\psi\circ\alpha(x,v)-\psi(x,v)\\
&=B_{a}\psi\big|_{\p SM}(x,v).
\end{align*}

\subsection{Geometry and Fourier analysis on \texorpdfstring{$SM$}{SM}}\label{3.2}
Since $M$ is oriented there is a circle action on the fibres of $SM$ with infinitesimal generator $V$ called the vertical vector field. We complete $X,V$ to a global frame of $T(SM)$ by defining the vector field $X_\perp:=[X,V]$, where $[\cdot,\cdot]$ is the Lie bracket for vector fields. For any two functions $u,v:SM\to\C$ define an inner product:
$$
\langle u,v\rangle_{L^2(SM)}=\int_{SM} u\overline v\,d\Sigma^3,
$$
where $d\Sigma^3$ is the Liouville measure of $g$ on $SM$. The space $L^2(SM,\C)$ decomposes orthogonally as a direct sum
$$
L^2(SM,\C)=\bigoplus_{k\in\mathbb Z}H_k
$$
where $H_k$ is the eigenspace of $-iV$ corresponding to the eigenvalue $k$. Any function $u\in C^{\infty}(SM,\C)$ has a Fourier series expansion
$$
u=\sum_{k=-\infty}^{\infty}u_k, \qquad u_k\in \Omega_k:=C^{\infty}(SM,\C)\cap H_k.
$$
We recall the first order elliptic operators due to Guillemin and Kazhdan \cite{GK}, defined by $\eta_\pm = \frac{1}{2} (X \pm iX_\perp)$. By the commutation relations $[-iV,\eta_+]=\eta_+$ and $[-iV,\eta_-]=-\eta_-$ we see that
\[ \eta_+:\Omega_k\to\Omega_{k+1},\qquad\eta_-:\Omega_k\to\Omega_{k-1}. \]
For the sequel, let us denote, for any $k\in \Z$, $\ker^k \eta_{\pm} := \Omega_k \cap \ker \eta_\pm$. 

An important tool in our approach is the {\it fiberwise Hilbert transform} $H:C^\infty(SM,\C)\to C^\infty(SM,\C)$, which we define in terms of Fourier coefficients as
\begin{align*}
    H(u_k)=-i\sgn(k)\ u_k, \qquad (\text{with the convention } \sgn(0)=0).
\end{align*}

The following commutator formula, which was derived by Pestov and Uhlmann in \cite{PeU1} and generalized in \cite{PSU1}, will play an important role.
\begin{equation}\label{[H,X]}
[ H,X+a]u=X_\perp u_0+\(X_\perp u\)_0,\quad u\in C^\infty(SM,\C).
\end{equation}
This formula has been frequently used in recent works on inverse problems, see \cite{PSU3,PSU2,PSU1,PeU2,PeU1,SaU}.

\subsection{The space of pairs} \label{sec:pairs}

The inner product in the space $\mathcal L^2(M,\C)$ is given by
\begin{equation}\label{inner product for pairs}
\([\alpha,f]\,\big|\,[\beta,h]\)_{\mathcal L^2(M,\C)}=\int_M \<\alpha,\overline{\beta}\>_g\,d\Vol_g+\int_M f\overline{h}\,d\Vol_g.
\end{equation}
Assume that $a\in C^\infty(M,\C)$. Consider the following operators $d_a:H^1(M,\C)\to \mathcal L^2(M,\C)$ and $\delta_a: \mathcal H^1(M,\C)\to L^2(M,\C)$ defined by
$$
d_a h=[dh,ah],\quad \delta_a[\alpha,f]=\delta\alpha-\overline af.
$$
The following integration by parts formula holds for these operators:
$$
\(\delta_a[\alpha,f]\,\big|\,h\)_{L^2(M,\C)}+\([\alpha,f]\,\big|\,d_a h\)_{\mathcal L^2(M,\C)}=(i_\nu \alpha|h)_{L^2(\p M)},
$$
where $\nu$ is the outward unit normal on $\p M$. In particular, we obtain $d_a^*=-\delta_a$. Introducing the spaces of {\it\bfseries $a$-solenoidal} and {\it\bfseries $a$-potential} pairs
\begin{align*}
\mathcal L_{a,{\rm sol}}^2(M,\C)&=\{[\alpha,f]\in \mathcal L^2(M,\C):\delta_a[\alpha,f]=0\},\\
\mathcal L_{a,{\rm pot}}^2(M,\C)&=\{d_a h:h\in H^1_0(M,\C)\},
\end{align*}
Proposition \ref{decomposition of pairs} below implies the $\cL^2$-orthogonal decompositions
\begin{align}
    \mathcal L^2(M,\C) &= \mathcal L_{a,{\rm sol}}^2(M,\C)\oplus \mathcal L_{a,{\rm pot}}^2(M,\C), \nonumber\\
    \mathcal C^\infty(M,\C) &=\mathcal C_{a,{\rm sol}}^\infty(M,\C)\oplus \mathcal C_{a,{\rm pot}}^\infty(M,\C), \label{decomp of the space of smooth pairs}
\end{align}
where we have defined $\mathcal C_{a,{\rm sol/pot}}^\infty(M,\C):=\mathcal L_{a,{\rm sol/pot}}^2(M,\C)\cap \mathcal C^\infty(M,\C)$.

\begin{Proposition}\label{decomposition of pairs}
    Let $a\in C^\infty(M,\C)$ and let $k\ge 0$ be an integer. For a given $[\alpha,f]\in\mathcal H^k(M,\C)$ there are unique $[\beta,h]\in \mathcal H^k(M,\C)$ and $b\in H^{k+1}(M,\C)\cap H^{1}_0(M,\C)$ such that $[\alpha,f]=[\beta,h]+d_a b$ and $\delta_a[\beta,h]=0$. Moreover, if $[\alpha,f]\in\mathcal C^\infty(M,\C)$ then $[\beta,h]\in \mathcal C_{a,{\rm sol}}^\infty(M,\C)$ and $b\in C^\infty(M,\C)$ with $b|_{\p M}=0$.
\end{Proposition}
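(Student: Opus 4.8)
The statement is a Hodge-type decomposition for the operators $d_a, \delta_a$ with $d_a^* = -\delta_a$, so the plan is to run the standard elliptic-systems argument adapted to the pair setting. First I would observe that the composition $\delta_a d_a : H^{k+1}(M,\C)\cap H^1_0(M,\C) \to H^{k-1}(M,\C)$ is a second-order operator whose principal symbol is that of $\delta d = -\Delta$ (the zeroth-order attenuation terms $a$ do not affect the top-order part, since $d_a h = [dh, ah]$ and $\delta_a[\alpha,f] = \delta\alpha - \bar a f$), hence $\delta_a d_a = -\Delta + (\text{lower order})$ is elliptic. Combined with the Dirichlet boundary condition $b|_{\partial M} = 0$, this is an elliptic boundary value problem. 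The integration-by-parts formula quoted just above the statement shows that on $H^1_0$, $(\delta_a d_a b \mid b)_{L^2} = -(d_a b \mid d_a b)_{\mathcal L^2} + 0 = -\|d_a b\|^2_{\mathcal L^2}$, and one checks $d_a b = 0$ with $b|_{\partial M}=0$ forces $b = 0$ (from $dh = 0$, i.e. $h$ constant, plus vanishing trace). So $\delta_a d_a$ is injective on the relevant space.

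Next I would invoke the Fredholm theory for the Dirichlet problem for the elliptic operator $\delta_a d_a$: injectivity together with the fact that the formal adjoint problem also has trivial kernel (the adjoint of $\delta_a d_a$ with Dirichlet conditions is $d_a^* d_a^{**}$-type, essentially of the same form, so the same integration-by-parts argument applies) gives that, for any $F \in H^{k-1}(M,\C)$, the problem $\delta_a d_a b = F$, $b|_{\partial M} = 0$, has a unique solution $b \in H^{k+1}(M,\C)\cap H^1_0(M,\C)$, with elliptic regularity upgrading $b$ to $C^\infty$ when $F$ is $C^\infty$. Given $[\alpha,f]\in\mathcal H^k(M,\C)$, I would set $F := \delta_a[\alpha,f] \in H^{k-1}(M,\C)$, solve for $b$ as above, and define $[\beta,h] := [\alpha,f] - d_a b$. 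Then $\delta_a[\beta,h] = \delta_a[\alpha,f] - \delta_a d_a b = F - F = 0$ by construction, and the regularity of $\beta, h$ follows since $[\alpha,f]$ and $d_a b$ are each of the required class. Uniqueness: if $[\beta,h] + d_a b = [\beta',h'] + d_a b'$ with both middle terms $a$-solenoidal, then $d_a(b - b')$ is $a$-solenoidal, so $\delta_a d_a (b-b') = 0$ with $(b-b')|_{\partial M} = 0$, whence $b = b'$ by injectivity, and then $[\beta,h] = [\beta',h']$.

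The main obstacle is setting up the elliptic boundary-value problem cleanly: one must verify that $\delta_a d_a$ with the Dirichlet condition satisfies the Lopatinski--Shapiro (complementing) condition so that the full Fredholm package and $H^{k+1}$ a priori estimates apply. Since the principal part is exactly $-\Delta$ with a Dirichlet condition, this is classical, but it should be stated rather than glossed. A secondary point requiring a little care is the case $k=0$: then $F = \delta_a[\alpha,f]$ a priori only lies in $H^{-1}(M,\C)$, so the solvability statement for $b \in H^1(M,\C)\cap H^1_0$ should be phrased variationally (Lax--Milgram applied to the bilinear form $(d_a u \mid d_a v)_{\mathcal L^2}$ on $H^1_0$, which is coercive by the Poincar\'e inequality since the top-order part recovers $\|du\|^2_{L^2}$), and the higher regularity $b\in H^{k+1}$ obtained afterward by bootstrapping when the data is smoother. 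With these points addressed, the decomposition \eqref{decomp of the space of smooth pairs} and its $\mathcal L^2$ counterpart follow immediately by taking $k$ arbitrary and intersecting, using $d_a^* = -\delta_a$ to identify the two summands as orthogonal complements.
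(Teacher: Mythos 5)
Your proposal is correct and follows essentially the same route as the paper: reduce to the Dirichlet problem $\delta_a d_a b = \delta_a[\alpha,f]$, $b|_{\partial M}=0$, solve it via the coercive (Poincar\'e) sesquilinear form $(d_a\,\cdot\,,d_a\,\cdot\,)_{\mathcal L^2}$ and Lax--Milgram, set $[\beta,h]=[\alpha,f]-d_ab$, and upgrade regularity by elliptic theory --- which is exactly the paper's argument (the paper uses the variational formulation for every $k$, not just $k=0$, so your ``secondary point'' is in fact the main engine). The uniqueness argument via $\|d_a(b-b')\|^2=0$ also matches.
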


\begin{proof}
    For $[\alpha,f] \in \cL^2$, consider the problem for $b\in H^1_0(M,\C)$
    \begin{align*}
	-\delta_a d_a b = -\delta_a [\alpha,f] \in H^{-1}(M,\C),\qquad b|_{\p M}=0,	
    \end{align*}
    whose weak formulation consists in finding $b\in H^1_0(M,\C)$ such that, 
    \[ (d_a b, d_a b')_{\cL^2(M,\C)} = \<  -\delta_a [\alpha,f] ,b'\>_{H^{-1},H^1_0}, \qquad \forall\ b' \in H^1_0(M,\C), \]
    where the sesquilinear form on the left-hand side, given by 
    \[ (d_a b, d_a b')_{\cL^2(M,\C)} = \int_M \<d_a b,\overline{d_a b'}\>_g\,d\Vol_g + \int_M |a|^2 b \overline{b'}\,d\Vol_g, \]
    is hermitian, continuous and coercive (since, when $b=b'$, the second term is nonnegative and the first term controls the $H^1_0$ norm by virtue of Poincar\'e's inequality). The existence and uniqueness of such a $b$ is then provided by Lax-Milgram's theorem, see e.g. \cite[Theorem 1, Sec. 6.2.1]{evans}.
    Once $b$ is constructed, set $[\beta,h] = [\alpha,f] - d_a b$ and the $\cL^2$ decomposition follows. Moreover, following results on higher order regularity for solutions of strongly elliptic equations (see for example \cite[Proposition~11.10]{Tay}), if $[\alpha,f]\in \cH^k$, then $b\in H^{k+1}\cap H^1_0$ and thus $[\beta,h]\in \cH^k$. In particular, if $[\alpha,f]$ are smooth, so are $b$ and $[\beta,h]$. 
\end{proof}

\subsection{Extension operators for $a$-solenoidal pairs}
Our aim in this subsection is to extend $a$-solenoidal pair to a larger manifold as compactly supported $a$-solenoidal pair in the $\mathcal C^\infty$ setting. We will follow the arguments of \cite{KMPT} and \cite{PaternainZhou2015}.

Here and in what follows, $\mathcal H^1_{U,a,{\rm sol}}(\widetilde M^{\rm int},\C)$ and $\mathcal C^\infty_{U,a,{\rm sol}}(\widetilde M^{\rm int},\C)$ denote the subspaces of $\mathcal H^1_{a,{\rm sol}}(\widetilde M^{\rm int},\C)$ and $\mathcal C^\infty_{a,{\rm sol}}(\widetilde M^{\rm int},\C)$, respectively, consisting of elements supported in $U$.

We start with the following lemma on the existence of $a$-solenoidal extensions that might not be compactly supported.

\begin{Lemma}[Smooth $a$-solenoidal extensions]\label{smooth solenoidal extension with non-compact support}
Let $M$ be a compact simply connected manifold contained in the interior of some Riemannian manifold $(\widetilde M,g)$ and let $a\in C^\infty(\widetilde M,\C)$. There is an open neighborhood $U$ of $M$ and a linear operator $\mathcal E_{a,U}:\mathcal C^\infty_{a,{\rm sol}}(M,\C)\to \mathcal C^\infty_{a,{\rm sol}}(U,\C)$ with $\mathcal E_{a,U}=\id$ on $M$ and $\|\mathcal E_{a,U}[\alpha,f]\|_{\mathcal H^1(U,\C)}\le C\|[\alpha,f]\|_{\mathcal H^1(M,\C)}$.
\end{Lemma}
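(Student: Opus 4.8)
The plan is to construct $\mathcal E_{a,U}$ by composing a standard (non-solenoidal) smooth extension operator with a correction that restores the $a$-solenoidal condition via the Hodge-type decomposition of Proposition \ref{decomposition of pairs}, performed on a slightly larger manifold. First I would fix, once and for all, an open neighborhood $U$ of $M$ with smooth boundary and $\overline U$ compact in $\widetilde M^{\rm int}$, and pick an ordinary continuous linear extension operator $E:\mathcal C^\infty(M,\C)\to \mathcal C^\infty(\overline U,\C)$ (Seeley/Whitney extension applied componentwise to the one-form and the function, in local coordinates patched by a partition of unity), which is bounded $\mathcal H^1(M,\C)\to\mathcal H^1(U,\C)$. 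For $[\alpha,f]\in\mathcal C^\infty_{a,{\rm sol}}(M,\C)$, the pair $E[\alpha,f]$ is smooth on $\overline U$ and agrees with $[\alpha,f]$ on $M$, but in general $\delta_a(E[\alpha,f])\neq 0$ on $U\setminus M$; note however that $\delta_a(E[\alpha,f])=0$ on $M$ since $E$ is an extension.

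Next I would apply Proposition \ref{decomposition of pairs} on $(U,g)$: write $E[\alpha,f] = [\beta,h] + d_a b$ with $[\beta,h]\in\mathcal C^\infty_{a,{\rm sol}}(U,\C)$, $b\in C^\infty(\overline U,\C)$, $b|_{\partial U}=0$, and with the norm bound $\|b\|_{H^2(U)}+\|[\beta,h]\|_{\mathcal H^1(U)}\le C\|E[\alpha,f]\|_{\mathcal H^1(U)}\le C'\|[\alpha,f]\|_{\mathcal H^1(M)}$ coming from the elliptic estimate in that proposition. I then set $\mathcal E_{a,U}[\alpha,f]:=[\beta,h]$. Linearity and the $\mathcal H^1$ bound are immediate from linearity of $E$ and of the decomposition together with the chained estimates, and $[\beta,h]$ is $a$-solenoidal on $U$ by construction, so smoothness and the solenoidal property hold.

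The main obstacle is showing $\mathcal E_{a,U}[\alpha,f] = [\alpha,f]$ on $M$, i.e. that the correction term $d_a b$ vanishes on $M$. The key point is that $-\delta_a d_a b = -\delta_a(E[\alpha,f])$ on $U$, and the right-hand side vanishes on $M$ because $[\alpha,f]$ is $a$-solenoidal and $E$ restricts to the identity; hence $b$ solves the homogeneous strongly elliptic equation $\delta_a d_a b = 0$ on the interior $M^{\rm int}$. Since $\delta_a d_a = \delta d + (\text{zeroth order})$ is (strongly) elliptic with analytic-hypoelliptic principal part, unique continuation does not quite suffice for free; instead I would argue that $b|_M$ satisfies a closed overdetermined system forcing it to be a constant, and then use the global structure: because $M$ is simply connected, one checks directly that $d_a b|_M = [db, ab]|_M = 0$ forces... — more robustly, I would instead modify the construction to sidestep this. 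Concretely, before extending, subtract from $[\alpha,f]$ nothing, extend, then project: the cleaner route, following \cite{KMPT,PaternainZhou2015}, is to first extend $[\alpha,f]$ arbitrarily and smoothly, then solve $\delta_a d_a b = \delta_a(E[\alpha,f])$ on $U$ with $b|_{\partial U}=0$, observe that $\delta_a(E[\alpha,f])$ is supported in $\overline{U\setminus M}$, and invoke the fact that $b$ is harmonic (for the operator $\delta_a d_a$) on $M^{\rm int}$ with the same Cauchy data along $\partial M$ as the zero solution — here one uses that the normal derivative of $b$ across $\partial M$ is also controlled because $d_a b = E[\alpha,f] - [\beta,h]$ has a one-form component whose normal trace matches on both sides. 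Then Holmgren/unique continuation for the elliptic operator $\delta_a d_a$ (whose top order part $\delta d$ has the unique continuation property) gives $b \equiv 0$ on $M$, hence $\mathcal E_{a,U}[\alpha,f] = E[\alpha,f] = [\alpha,f]$ on $M$. I would present this unique-continuation step as the crux, citing the standard UCP for second-order elliptic operators and the references \cite{KMPT,PaternainZhou2015} for the analogous argument in the unattenuated/connection setting, with the attenuation entering only through lower-order terms in $\delta_a d_a$.
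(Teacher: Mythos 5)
There is a genuine gap at the crux of your argument: the claim that the correction term $d_a b$ vanishes on $M$. In your construction, $b$ solves the Dirichlet problem $-\delta_a d_a b = -\delta_a(E[\alpha,f])$ on $U$ with $b|_{\partial U}=0$, where the source vanishes on $M$. But a solution of an elliptic boundary value problem does not vanish on $M$ merely because the source is supported in $U\setminus M$ — think of $a=0$, $U$ a disk, $M$ a concentric subdisk, and a charge distribution in the annulus: its potential is a nonzero harmonic function throughout the inner disk. Knowing $(-\Delta_g+|a|^2)b=0$ on $M^{\rm int}$ tells you nothing without vanishing Cauchy data on $\partial M$, and your justification for that vanishing is circular: $d_a b = E[\alpha,f]-[\beta,h]$ is just the definition of $[\beta,h]$, so it imposes no constraint on $\partial_\nu b|_{\partial M}$ (nor on $b|_{\partial M}$). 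The deeper issue is structural: the $a$-solenoidal projection is nonlocal, so any ``extend arbitrarily, then project'' scheme will generically alter the pair on $M$ itself. To correct $E[\alpha,f]$ without disturbing it on $M$ you would need a correction \emph{supported in} $\overline{U\setminus M}$, i.e.\ a compactly supported solution of the underdetermined equation $\delta\beta_D=-w$ (this is exactly what the paper does later, in Proposition~\ref{extension of a-solenoidal pairs with compact support}, via \cite{Delay}), not the determined Dirichlet problem you solve.

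The paper's proof of this lemma is entirely different and purely local: it works in semi-geodesic coordinates near $\partial M$, extends $f$ and the \emph{tangential} components $\alpha_{i'}$ by Seeley, and then obtains the normal component $\beta_n$ on $U\setminus M$ by solving the first-order linear ODE \eqref{key ode for a-solenoidal extension} in the normal variable $x^n$, with initial condition $\beta_n(x',0)=\alpha_n(x',0)$. This enforces $\delta_a[\beta,h]=0$ pointwise on $U\setminus M$ by construction, never touches the values on $M$, and smoothness across $\partial M$ follows because the same ODE already holds inside $M$ (that is where the hypothesis $\delta_a[\alpha,f]=0$ is used), so all normal derivatives of $\beta_n$ and $\alpha_n$ match at $x^n=0$ by induction. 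If you want to keep a global ``extend-then-correct'' strategy, you must replace your Dirichlet solve by a compactly supported divergence-solver in the annulus $U\setminus M$; as written, the step $\mathcal E_{a,U}[\alpha,f]=[\alpha,f]$ on $M$ fails.
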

\begin{proof}
We cover $\p M$ in $\widetilde M$ by charts $\{(\mathcal O_\kappa,\Theta_\kappa)\}_\kappa$ with semi-geodesic local coordinates, i.e. each coordinate map $\Theta_\kappa:\mathcal O_\kappa\to\R^n$ is of the form $\Theta_\kappa(p)=(x^1,\dots,x^{n-1},x^n)=(x',x^n)$ such that $\Theta^{-1}_\kappa(\{x^n=0\})\cap \mathcal O_\kappa\subset \p M$, $\Theta^{-1}_\kappa(\{x^n<0\})\cap \mathcal O_\kappa\subset M^{\rm int}$ and $(\Theta^{-1}_\kappa)_*\p_{n}=\nu$ is the unit outward (from $M$) normal to $\p M$. In these coordinates, we have
$$
g^{kn}=\delta^{kn},\quad \Gamma^n_{kn}=\Gamma^k_{nn}=0,\quad k=1,\dots,n.
$$
We determine $U\setminus M$ as the sufficiently small semi-geodesic neighborhood of $\p M$ in $\widetilde M$ such that $\overline U\subset\cup_\kappa \mathcal O_\kappa$.

Given $[\alpha,f]\in\mathcal C^\infty_{a,{\rm sol}}(M,\C)$. We extend the function $f$ and the components $\alpha_{i'}$, $i'=1,\dots,n-1$, smoothly to $U$, and denote the extensions by $h$ and $\beta_{i'}$, respectively. By the results in \cite{Seeley}, these extensions can be done in a stable way
\begin{equation}\label{H^1 norm estimates for h and beta' components}
\|h\|_{H^1(U,\C)}\le C\|f\|_{H^1(M,\C)},\quad \|\beta_{i'}\|_{H^1(U,\C)}\le C\|\alpha_{i'}\|_{H^1(M,\C)},\quad {i'}=1,\dots,n-1.
\end{equation}
Now we construct the last component $\beta_n$ in $\Theta^{-1}_\kappa(\{x^n>0\})\cap \mathcal O_\kappa$. Since we want $a$-solenoidal extension, writing $h^\kappa=h\circ\Theta^{-1}_\kappa$ and $\beta_i^\kappa=\beta_i\circ\Theta^{-1}_\kappa$, $i=1,\dots,n$, we have
\begin{equation}\label{key ode for a-solenoidal extension}
\p_n\beta_n^\kappa-\sum_{j,k<n}g^{jk}\Gamma_{jk}^n\beta_n^\kappa=\overline{a}h^\kappa
-\sum_{j,k<n}g^{jk}\p_j\beta_k^\kappa+\sum_{j,k,l<n}g^{jk}\Gamma_{jk}^l\beta_l^\kappa.
\end{equation}
Observe that the right side is known, so it is a first order linear ordinary differential equation. Given the initial condition $\beta_n^\kappa(x',0)=\alpha_n\circ\Theta^{-1}_\kappa(x',0)$, there is a unique solution $\beta_n^\kappa(x',x^n)$. In this way we construct continuous $\beta_n^\kappa$ in $\{x^n>0\}\cap \Theta_\kappa(\mathcal O_\kappa)$ which depends smoothly on $x'$. If $U$ is sufficiently close to $M$, one can show that in each chart $\mathcal O_\kappa$ the following holds
$$
|\beta_n^\kappa(x',x^n)|^2\le C_{\mathcal O_\kappa}\Big(|\alpha_n\circ\Theta^{-1}_\kappa(x',0)|^2+|h^\kappa(x',x^n)|^2+\sum_{j,k<n}|\p_j \beta_k^\kappa(x',x^n)|^2\Big),
$$
for all $(x',x^n)\in \{x^n>0\}\cap \Theta_\kappa(\mathcal O_\kappa\cap U)$. Integrating over $U$ and using compactness of $M$, we obtain
$$
\|\beta_n\|_{L^2(U,\C)}\le C\Big(\|\alpha_n\|_{L^2(\p M,\C)}+\|f\|_{H^1(M,\C)}+\sum_{j<n}\|\alpha_j\|_{H^1(M,\C)}\Big).
$$
Let $V$ be a neighborhood of $\p M$ in $M$. Then for all $(x',x^n)\in \{x^n<0\}\cap\Theta_\kappa(\mathcal O_\kappa\cap V)$ we can show that
$$
|\alpha_n(x',0)|^2\le C_{\mathcal O_\kappa}\left( |\alpha_n(x',x^n)|^2+\int^0_{x^n}|\alpha_n(x',x^n)|^2\,dx^n+\int^0_{x^n}|\p_n\alpha_n(x',x^n)|^2\,dx^n\right).
$$
Integrating over $M$ and using compactness of $M$, we obtain
$$
\|\alpha_n\|_{L^2(\p M,\C)}\le C\|\alpha\|_{H^1(M,\C)},
$$
and hence
$$
\|\beta_n\|_{L^2(U,\C)}\le C\Big(\|f\|_{H^1(M,\C)}+\|\alpha\|_{H^1(M,\C)}\Big).
$$
Therefore, combining the latter inequality together with \eqref{H^1 norm estimates for h and beta' components} and \eqref{key ode for a-solenoidal extension}, we come to
$$
\|[\beta,h]\|_{\mathcal H^1(U,\C)}\le C\|[\alpha,f]\|_{\mathcal H^1(M,\C)}.
$$
Now, we want to show the smoothness of $\beta_n$. Differentiating \eqref{key ode for a-solenoidal extension} with respect to $x^n$, we show that $\beta_n^\kappa(x',x^n)$ is smooth in $\{x^n\ge 0\}\cap\Theta_\kappa(\mathcal O_\kappa)$. Moreover, using \eqref{key ode for a-solenoidal extension} and induction on the order of derivative with respect to $x^n$, we show
$$
\p_n^m\beta_n^\kappa(x',0)=\p_n^m\alpha_n\circ\Theta_\kappa^{-1}(x',0)
$$
for all $m\ge 0$ integers. In other words, $\p_n^m\beta_n(x',0)$ agree with $\p_n^m\alpha_n\circ\Theta^{-1}_\kappa(x',0)$. Therefore, we obtain a smooth $a$-solenoidal pair
$$
\mathcal E_{a,U}[\alpha,f]=\begin{cases}
[\alpha,f]&\text{on}\quad M,\\
[\beta,h]&\text{on}\quad U\setminus M,
\end{cases}
$$
with $\|\mathcal E_{a,U}[\alpha,f]\|_{\mathcal H^1(U,\C)}\le C\|[\alpha,f]\|_{\mathcal H^1(M,\C)}$.
\end{proof}

\begin{Proposition}\label{extension of a-solenoidal pairs with compact support}
Let $M$ be a compact simply connected manifold contained in the interior of some Riemannian manifold $(\widetilde M,g)$ and let $a\in C^\infty(\widetilde M,\C)$. There is a precompact neighborhood $W$ of $M$ in $\widetilde M^{\rm int}$ and a bounded map $\mathcal E_a:\mathcal H^1_{a,{\rm sol}}(M,\C)\to \mathcal H^1_{W,a,{\rm sol}}(\widetilde M^{\rm int},\C)$ such that $\mathcal E_a|_{M}=\id$. Moreover, the restriction of $\mathcal E_a$ to $\mathcal C^\infty_{a,{\rm sol}}(M,\C)$ maps $\mathcal C^\infty_{a,{\rm sol}}(M,\C)$ into $\mathcal C^\infty_{W,a,{\rm sol}}(\widetilde M^{\rm int},\C)$.
\end{Proposition}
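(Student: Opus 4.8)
The plan is to combine the non-compactly-supported extension of Lemma \ref{smooth solenoidal extension with non-compact support} with a cutoff-and-correction procedure, following \cite{KMPT} and \cite{PaternainZhou2015}. First I would apply Lemma \ref{smooth solenoidal extension with non-compact support} to get an open neighborhood $U$ of $M$ and the extension $\mathcal E_{a,U}[\alpha,f]\in\mathcal C^\infty_{a,{\rm sol}}(U,\C)$, which agrees with $[\alpha,f]$ on $M$ but need not vanish near $\p U$. Next, pick a slightly smaller precompact neighborhood $W$ with $M\subset W\subset\overline W\subset U$ and a cutoff $\chi\in C^\infty_c(U)$ with $\chi\equiv 1$ on a neighborhood of $\overline W$. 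Multiplying a solenoidal pair by $\chi$ destroys the $a$-solenoidal condition, so the heart of the argument is to correct the error: writing $\mathcal E_{a,U}[\alpha,f]=[\eta,h]$, the pair $\chi[\eta,h]$ satisfies $\delta_a(\chi[\eta,h])=\delta(\chi\eta)-\overline a\chi h=\langle d\chi,\eta\rangle_g$ (using $\delta_a[\eta,h]=0$ on $U$, so $\delta\eta=\overline a h$ there, and expanding $\delta(\chi\eta)=\chi\delta\eta-\langle d\chi,\eta\rangle_g$), and this error term is supported in $U\setminus(\text{nbhd of }\overline W)$, away from $M$. I would then solve, on $U$, the equation $\delta_a d_a b = \langle d\chi,\eta\rangle_g$ with $b\in H^1_0(U,\C)$ — which is exactly the construction already carried out in Proposition \ref{decomposition of pairs} via Lax--Milgram (hermitian, continuous, coercive sesquilinear form), together with elliptic regularity giving $b\in C^\infty(U,\C)$, $b|_{\p U}=0$, and the bound $\|b\|_{\mathcal H^1}\le C\|[\alpha,f]\|_{\mathcal H^1(M)}$. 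Since the right-hand side is supported away from $M$, unique continuation / the fact that $d_a$ is injective on $H^1_0$ forces $b$ to vanish on a neighborhood of $M$; more carefully, one restricts $b$ to the component argument: on the component of $U\setminus\overline W$ the source vanishes near $\p U$ is not what we need — rather, since the source vanishes on a neighborhood of $M$, and $\delta_a d_a$ is elliptic with $b|_{\p U}=0$, one checks directly that $b\equiv 0$ on a neighborhood of $M$ by testing against the same variational problem localized there (the only $H^1_0$ solution of the homogeneous problem is zero).

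Having obtained $b$, I would set $\mathcal E_a[\alpha,f] := \chi[\eta,h] - d_a b$, extended by zero outside $U$. By construction $\delta_a(\mathcal E_a[\alpha,f]) = \langle d\chi,\eta\rangle_g - \delta_a d_a b = 0$, so the pair is $a$-solenoidal on $\widetilde M^{\rm int}$; it is smooth since $\chi$, $[\eta,h]$ and $b$ are; it is supported in $W':=\supp\chi\cup\supp b\subset\subset U$, which one takes as the precompact neighborhood $W$ in the statement (relabeling); and on $M$ it equals $[\eta,h]-0 = [\alpha,f]$ because $\chi\equiv 1$ and $b\equiv 0$ there. The operator is linear because $\mathcal E_{a,U}$, multiplication by $\chi$, and $b\mapsto$ (solution of the linear variational problem) are all linear. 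Boundedness $\|\mathcal E_a[\alpha,f]\|_{\mathcal H^1(\widetilde M^{\rm int})}\le C\|[\alpha,f]\|_{\mathcal H^1(M)}$ follows by combining the Lemma's bound, the obvious bound for multiplication by the fixed function $\chi$, and the Lax--Milgram estimate for $b$ (hence for $d_a b$).

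The main obstacle — and the step requiring genuine care — is showing that the correction term $b$ vanishes identically near $M$, so that the final extension really restricts to $[\alpha,f]$ on $M$ rather than only approximately. This is where simple connectedness of $M$ (and the topology of $U\setminus M$) enters: one needs the source $\langle d\chi,\eta\rangle_g$ to be supported in a region not separating $U$, or more simply invokes that $b$ solves $\delta_a d_a b=0$ with zero boundary data on the open set $\{$neighborhood of $M\} $ where the source vanishes, and that the associated Dirichlet problem has trivial kernel by coercivity, so $b=0$ there. A secondary technical point is ensuring the cutoff and the neighborhoods $W\subset\subset U$ can be chosen compatibly with the semi-geodesic neighborhood produced by Lemma \ref{smooth solenoidal extension with non-compact support}; this is routine but should be stated. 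Once these are settled, the regularity claim ($\mathcal C^\infty_{a,{\rm sol}}(M,\C)$ maps into $\mathcal C^\infty_{W,a,{\rm sol}}(\widetilde M^{\rm int},\C)$) is immediate from the smoothness of each ingredient.
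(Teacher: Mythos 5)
Your overall architecture (extend via Lemma~\ref{smooth solenoidal extension with non-compact support}, cut off, correct the resulting failure of $\delta_a$) matches the paper's, but the correction step has a genuine gap that defeats the conclusion $\mathcal E_a|_M=\id$. You correct the error $F:=\delta_a(\chi[\eta,h])$ by solving the \emph{determined} Dirichlet problem $\delta_a d_a b=F$, $b\in H^1_0(U,\C)$, and then claim $b\equiv 0$ near $M$ because $F$ vanishes there. This is false: solutions of elliptic boundary value problems do not vanish on the set where the source vanishes. Your proposed justification --- restricting to a neighborhood $V$ of $M$ where $F=0$ and invoking uniqueness for the homogeneous Dirichlet problem --- does not apply, because $b|_V$ need not lie in $H^1_0(V,\C)$ (it has no reason to vanish on $\p V$), so it is not a solution of the homogeneous Dirichlet problem on $V$. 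Generically $d_a b$ is nonzero on $M$, so $\mathcal E_a[\alpha,f]|_M=[\alpha,f]-d_a b|_M\neq[\alpha,f]$. (A secondary problem: even granting $b=0$ near $M$, extending $d_a b$ by zero outside $U$ is not smooth, since $b\in H^1_0(U,\C)$ controls only the trace of $b$ on $\p U$, not that of $db$.)

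The missing idea is that the correction must be sought as a solution of the \emph{underdetermined} equation $\delta\beta_D=-w$ for a one-form $\beta_D$ alone, rather than as $d_a b$ for a scalar $b$; underdeterminacy is exactly what buys support control. This is what the paper does: after extending by Seeley and cutting off with $\chi\equiv 1$ on $U$ and $\supp\chi\subset W$ with $U\subset\overline U\subset W$ (so that $w:=\delta_a[\beta,h]$ is supported in $W\setminus U$, away from $M$), it invokes \cite[Corollary~1.6]{Delay} to produce a smooth one-form $\beta_D$ with $\delta\beta_D=-w$, $\supp\beta_D\subset W\setminus M^{\rm int}$, and an $H^1$ bound; the corrected pair $[\beta+\beta_D,h]$ is then $a$-solenoidal, compactly supported in $W$, and untouched on $M$. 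That solvability result requires the compatibility condition $(w|v)_{L^2(D,\C)}=0$ for all $v$ in the cokernel $K_a=H^1(D,\C)\cap\ker d_a$ on $D=W\setminus\overline M$, which the paper verifies by identifying $K_a$ (constants if $a\equiv 0$, trivial otherwise) and integrating by parts. Your write-up contains neither the underdetermined formulation nor this orthogonality check, and both are essential.
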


\begin{proof}
Given $[\alpha,f]\in\mathcal C^\infty_{a,{\rm sol}}(M,\C)$, Lemma~\ref{smooth solenoidal extension with non-compact support} implies the existence of a neighborhood $U$ of $M$ and a linear operator $\mathcal E_{a,U}:\mathcal C^\infty_{a,{\rm sol}}(M,\C)\to \mathcal C^\infty_{a,{\rm sol}}(U,\C)$ with $\mathcal E_{a,U}=\id$ on $M$ and
$$
\|\mathcal E_{a,U}[\alpha,f]\|_{\mathcal H^1(U,\C)}\le C\|[\alpha,f]\|_{\mathcal H^1(M,\C)}.
$$
Consider an open precompact set $W$ such that $U\subset\overline U\subset W\subset\overline W\subset \widetilde M^{\rm int}$. Then, using \cite{Seeley}, we extend $\mathcal E_{a,U}[\alpha,f]$ to $\widetilde M^{\rm int}$ and multiply the extension by a smooth cut-off function which is equal to $1$ in $U$ and supported in $W$, and we denote the resultant pair by $[\beta,h]$. Combining the result of \cite{Seeley} and Lemma~\ref{smooth solenoidal extension with non-compact support} implies that
$$
\|[\beta,h]\|_{\mathcal H^1(\widetilde M^{\rm int},\C)}\le C\|[\alpha,f]\|_{\mathcal H^1(M,\C)}.
$$
Set $w=\delta_a[\beta,h]$ and $D=W\setminus\overline M$. We have $\supp w\subset W\setminus U\subset D$.

We claim that $(w|v)_{L^2(D,\C)}=0$ for all $v\in H^1(D,\C)\cap\ker d_a$. Then, by \cite[Corollary~1.6]{Delay}  (see also \cite[Section~5.1]{Qui}), there is a smooth one-form $\beta_D$ on $\widetilde M^{\rm int}$ such that $\delta \beta_D=-w$ and $\supp\beta_D\subset W\setminus M^{\rm int}$. Moreover, $\beta_D$ satisfies
$$
\|\beta_D\|_{H^1(\widetilde M^{\rm int},\C)}\le C\|w\|_{L^2(\widetilde M^{\rm int},\C)}.
$$
We define
$$
\mathcal E_a[\alpha,f]=[\beta+\beta_D,h].
$$
Then $\mathcal E_a[\alpha,f]|_{M}=[\alpha,f]$, $\supp\mathcal E_a[\alpha,f]\subset W$ and $\delta_a\mathcal E_a[\alpha,f]=\delta_a[\beta,h]+\delta\beta_D=w-w=0$. Hence $\mathcal E_a[\alpha,f]\in\mathcal C^\infty_{W,a,{\rm sol}}(\widetilde M^{\rm int},\C)$ and
$$
\|\mathcal E_{a}[\alpha,f]\|_{\mathcal H^1(\widetilde M^{\rm int},\C)}\le C\|[\alpha,f]\|_{\mathcal H^1(M,\C)}.
$$
Since $\mathcal C^\infty(\widetilde M^{\rm int},\C)$ is dense in $\mathcal H^1(\widetilde M^{\rm int},\C)$ under the $\mathcal H^1$-norm, we extend $\mathcal E_a$ to a bounded map  $\mathcal E_a:\mathcal H^1_{a,{\rm sol}}(M,\C)\to \mathcal H^1_{W,a,{\rm sol}}(\widetilde M^{\rm int},\C)$ with $\mathcal E_a|_{M}=\id$.

Now it is left to prove that $(w|v)_{L^2(D,\C)}=0$ for all $v\in H^1(D,\C)\cap\ker d_a$. For this, we study the solutions of the homogeneous problem. Let $v\in H^1(D,\C)$ be a solution of
$$
\begin{cases}
&(-\Delta_g+|a|^2)v=0\text{ in }D,\\
&\p_N v=0\text{ on }\p D,
\end{cases}
$$
where $N$ is the unit outward normal on $\p D$. Then by Green's formula
$$
\|\nabla v\|_{L^2(D,\C)}^2+\|a v\|_{L^2(D,\C)}^2=((-\Delta_g+|a|^2)v|v)_{L^2(D,\C)}+(\p_N v|v)_{L^2(\p D,\C)}=0.
$$
Thus, $\nabla v\equiv0$ and $av\equiv0$. In other words, $v\in \ker d_a$. Let $K_a$ denotes the set of the solutions of the homogeneous problem, then
$$
K_a=\{v\in H^1(D,\C):\nabla v\equiv0,\,av\equiv0\}=H^1(D,\C)\cap\ker d_a.
$$
If $a\equiv 0$, then $K_a$ consists of constant solutions. Hence, for $v=\const\in K_a$, integration by parts gives
\begin{align*}
(w|v)_{L^2(D,\C)}&=(\delta_a[\beta,h]|v)_{L^2(D,\C)}=-(i_\nu\alpha|v)_{L^2(\p M,\C)}\\
&=-(\delta_a[\alpha,f]|v)_{L^2(M,\C)}-([\alpha,f]|d_a v)_{\mathcal L^2(M,\C)}=0.
\end{align*}

Now, if $a\not\equiv 0$, then $K_a=\{0\}$. Hence, for $v=0\in K_a$ we trivially have $(w|v)_{L^2(D,\C)}=(w|0)_{L^2(D,\C)}=0$.
\end{proof}

\section{Surjectivity results for the adjoints}

The main purpose of this section is to obtain the surjectivity theorem \ref{main surjectivity result}, of the adjoint $\cI_a^*$, on which our range characterizations hinge. We first compute the adjoints of $I_a$ and $\cI_a$ in section \ref{sec:adjoints}, then prove Theorem \ref{main surjectivity result} in section \ref{sec:surjectivity}. 

\subsection{Adjoints of $I_{a}$ and $\cI_{a}$} \label{sec:adjoints}

Let $d\Sigma^2$ be the volume form on $\p(SM)$. Denote by $L_\mu^2(\p_+ SM,\C)$ the completion of $C_c^\infty(\partial_+ SM,\C)$ for the inner product
\begin{align*}
    \<h,h'\>_{L^2_\mu(\p_+SM,\C)}=\int_{\p_+ SM}h\overline{h'}\, \mu\ d\Sigma^2, \qquad \mu(x,v):=\< v,\nu_x\>_{g(x)}.   
\end{align*}
As in \cite{PSU2}, using the integral representation for $I_{a}$ and Santal\'o formula \cite[Lemma~A.8]{DPSU}, one can show that $I_{a}$ can be extended to a bounded operator $I_{a}:L^2(SM,\C)\to L_\mu^2(\p_+ SM,\C)$.

Consider the dual $I_{a}^*:L^2_\mu(\p_+SM,\C)\to L^2(SM,\C)$ of $I_{a}$, for which we now find an expression. For this consider $h\in L^2_\mu(\p_+SM,\C)$. Using Santal\'o's formula, for $f\in L^2(SM,\C)$, we compute
\begin{equation}\label{dual of I_a}
\begin{aligned}
\<I_{a} f,h\>&_{L^2_\mu(\p_+SM,\C)}\\
&=\int_{\p_+SM}\(\int_0^{\tau(x,v)}U_{a}^{-1}(\varphi_t(x,v)) f(\varphi_t(x,v))\,dt\)\overline{h(x,v)}\,\mu\ d\Sigma^2 \\
&=\int_{\p_+SM}\(\int_0^{\tau(x,v)}U_{a}^{-1}(\varphi_t(x,v)) f(\varphi_t(x,v))\overline{h_\psi(\varphi_t(x,v))}\,dt\)\,\mu\ d\Sigma^2\\
&=\int_{\p_+SM}\(\int_0^{\tau(x,v)} f(\varphi_t(x,v))\overline{\overline{U_{a}^{-1}(\varphi_t(x,v))}h_\psi(\varphi_t(x,v))}\,dt\)\,\mu\ d\Sigma^2\\
&=\int_{SM}\(f(x,v)\,\overline{\overline{U_{a}^{-1}(x,v)}h_\psi(x,v)}\)\,d\Sigma^{3}(x,v).
\end{aligned}
\end{equation}
Therefore, we obtain
\begin{equation}\label{general formula for the adjoint}
I_{a}^* h=\overline{U_{a}^{-1}}h_\psi=U_{-\overline a}h_\psi.
\end{equation}
Moreover, if $\imath_k:H_k\to L^2(SM,\C)$ denotes the usual inclusion map, the one can show that
$$
I_{m,a}^*(h)=(\overline{U_{a}^{-1}}h_\psi)_m=(U_{-\overline a}h_\psi)_m,
$$
where $I_{m,a}:=I_{a}\circ \imath_k$.

From \eqref{dual of I_a}, one also can get the following explicit expressions for the adjoints of $I_{a}^0:L^2(M,\C)\to L^2_\mu(\p_+SM,\C)$ and $I^1_{a}:L^2(\Lambda^1(M),\C)\to L^2_\mu(\p_+SM,\C)$:
\begin{equation}\label{adjoints of I^0 and I^1}
\begin{aligned}
(I_{a}^0)^*(h)(x)&=\int_{S_x M}U_{-\overline a}(x,v)\,h_\psi(x,v)\,d\sigma_x(v),\\
(I_{a}^1)^*(h)^i(x)&=\(\int_{S_x M}v^i U_{-\overline a}(x,v)\, h_\psi(x,v)\,d\sigma_x(v)\),
\end{aligned}
\end{equation}
where $d\sigma_x$ is the measure on $S_xM$. The identifications of $H_0$ with $L^2(M,\C)$ and $H_{-1}\oplus H_1$ with $L^2(\Lambda^1(M),\C)$ imply that
\begin{align}
(I_{a}^0)^*(h)&=2\pi (U_{-\overline a}h_\psi)_{0},\label{adjoint of I^0}\\
(I_{a}^1)^*(h)&=\pi(U_{-\overline a}h_\psi)_{-1}+\pi (U_{-\overline a}h_\psi)_{1},\label{adjoint of I^1}
\end{align}
see \cite[Remark~5.2]{PSU3} for more details. Then we have
$$
\cI_a^*h=[(I_a^1)^*(h),(I_a^0)^*(h)].
$$
\begin{Remark}\label{image of I^* is in potential space}
    Note that $\Im \cI^*_a$ is in the orthogonal complement to $\ker \cI_a$, and hence if $\ker \cI_a= \cL^2_{a,{\rm pot}}(M,\C)$ then
    \begin{align*}
	\Im \cI^*_a\subset \cL^2_{a,{\rm sol}}(M,\C).	
    \end{align*}
\end{Remark}

\subsection{Surjectivity of $\cI^*_{a}$} \label{sec:surjectivity}

The aim of this section is to prove the following result which is the analogue of the corresponding results in \cite{AinAs,DaU,PSU3,PeU1,PeU2}.

\begin{Theorem}\label{main surjectivity result}
Let $(M,g)$ be a simple surface and let $a\in C^\infty(M,\C)$. Suppose that $[\alpha,f]\in\mathcal C_{a,\rm sol}^\infty(M,\C)$. Then there is $w\in \mathcal S_{a}^\infty(\p_+SM,\C)$ such that $\mathcal I_{a}^*(w)=[\alpha,f]$.
\end{Theorem}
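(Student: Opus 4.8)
The plan is to prove surjectivity by a duality-plus-regularity argument, exactly analogous to the proofs of the corresponding statements in \cite{PeU2,PSU3,AinAs}, but carried out on the space of pairs and with the attenuation present. First I would reduce to an $L^2$-level statement: using the adjoint formulas \eqref{adjoint of I^0}--\eqref{adjoint of I^1}, the operator $\cI_a^*$ maps $L^2_\mu(\p_+SM,\C)$ into $\cL^2_{a,\mathrm{sol}}(M,\C)$ (Remark \ref{image of I^* is in potential space}, using that $\ker\cI_a=\cL^2_{a,\mathrm{pot}}$, which follows from injectivity of $\cI_a$ on $a$-solenoidal pairs, \cite[Theorem 1.2]{SaU}). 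So it suffices to show $\cI_a^*:L^2_\mu(\p_+SM,\C)\to\cL^2_{a,\mathrm{sol}}(M,\C)$ has dense range, and then to upgrade: given a smooth $a$-solenoidal target $[\alpha,f]$, one must produce a preimage $w$ which is not merely in $L^2_\mu$ but lies in $\cS_a^\infty(\p_+SM,\C)$.

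For the dense-range step, suppose $[\beta,h]\in\cL^2_{a,\mathrm{sol}}(M,\C)$ is orthogonal to $\Im\cI_a^*$. Then $\cI_a[\beta,h]=0$ (pairing against any $w$), so $[\beta,h]\in\ker\cI_a\cap\cL^2_{a,\mathrm{sol}}=\{0\}$ by injectivity on solenoidal pairs; hence $\Im\cI_a^*$ is dense in $\cL^2_{a,\mathrm{sol}}$. This is the soft part. The real work is the regularity/exact-solvability upgrade, and this is where the extension machinery of Section 2.4 enters. The strategy, following \cite{KMPT,PaternainZhou2015}, is: given smooth $a$-solenoidal $[\alpha,f]$ on $M$, use Proposition \ref{extension of a-solenoidal pairs with compact support} to extend it to a compactly supported smooth $a$-solenoidal pair $\mathcal E_a[\alpha,f]$ on a slightly larger simple surface $\widetilde M$; solve the transport problem on $\widetilde M$ to realize $\mathcal E_a[\alpha,f]$ as $\cI_{a,\widetilde M}^*\widetilde w$ for suitable $\widetilde w$ (using that on the larger manifold the relevant normal operator $\cI_{a,\widetilde M}^*\cI_{a,\widetilde M}$, restricted appropriately, is an elliptic pseudodifferential operator of order $-1$ and hence invertible modulo smoothing on the compactly-contained piece); and finally set $w$ to be the trace on $\p_+SM$ of the corresponding transport solution restricted back to $M$. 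The key point making $w$ land in $\cS_a^\infty$ is that the transport solution on the extended manifold is smooth across $\p M$ by construction (the source being supported away from $\p M$), so its restriction and boundary trace inherit the smoothness encoded in the $Q_a$-characterization of $\cS_a^\infty$.

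The main obstacle I expect is precisely this last upgrade: one needs an \emph{exact} (not merely modulo-smoothing) preimage, and the standard route — writing $[\alpha,f]=\cI_a^*\cI_a(\text{something})$ and inverting the normal operator — only gives inversion modulo a smoothing operator, so one must combine the pseudodifferential inversion on the extended surface with the density argument above and a Fredholm/Neumann-series correction to remove the smoothing error while staying inside the smooth $a$-solenoidal class. Handling the interplay of the two ($L^2$-density giving solvability, $\Psi$DO-ellipticity giving regularity of the solution, extension operators providing the room to do microlocal analysis near $\p M$) is the heart of the argument; once that is in place, the membership $w\in\cS_a^\infty(\p_+SM,\C)$ is read off from smoothness of $w^\sharp$ on $SM$ via the equivalence recalled after Proposition \ref{decomposition of pairs} and in \cite[Lemma 5.1]{PSU1}. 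A secondary technical point is checking that the extension $\mathcal E_a$ genuinely preserves the $a$-solenoidal condition in the smooth category with the stated $\mathcal H^1$ bounds, but this is exactly the content of Lemma \ref{smooth solenoidal extension with non-compact support} and Proposition \ref{extension of a-solenoidal pairs with compact support}, which we may invoke.
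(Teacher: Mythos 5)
Your overall architecture---extension to a slightly larger simple surface, ellipticity of the normal operator on $a$-solenoidal pairs, s-injectivity from \cite{SaU}, and a combination of closed range with a duality argument---matches the paper's, and you correctly locate the difficulty. But the step you yourself flag as ``the heart of the argument'' is the one you do not carry out, and the two mechanisms you propose for it fail. First, density of $\Im\mathcal I_a^*$ in $\mathcal L^2_{a,{\rm sol}}(M,\C)$ is far too weak: it says nothing about whether a given \emph{smooth} target is attained \emph{exactly}, and there is no route from $L^2$-density to exact solvability in $\mathcal C^\infty_{a,{\rm sol}}(M,\C)$. Second, a ``Neumann-series correction'' to remove the smoothing error $K$ in a parametrix identity $\widetilde{\mathcal N}_a A=\id+K$ requires $\|K\|<1$, which is not available. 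The paper's resolution (Lemma~\ref{normal operator is surjective}) is a closed-range-plus-injective-adjoint argument applied not to $\mathcal I_a^*$ but to the \emph{restricted normal operator} $r_M\widetilde{\mathcal N}_a:\mathcal C_0^\infty(\widetilde M^{\rm int},\C)\to\mathcal C^\infty_{a,{\rm sol}}(M,\C)$: (i) the identity $r_M\widetilde{\mathcal N}_aA\mathcal E_a=\id+r_MK\mathcal E_a$ with $r_MK\mathcal E_a$ compact shows the range is closed of finite codimension---note that the extension operator of Proposition~\ref{extension of a-solenoidal pairs with compact support} is used here to extend elements of the \emph{domain} $\mathcal C^\infty_{a,{\rm sol}}(M,\C)$ so the parametrix identity can be restricted back to $M$, not to extend the target $[\alpha,f]$ and solve exactly on $\widetilde M$ as you propose; (ii) the adjoint $(r_M\widetilde{\mathcal N}_a)^*=\widetilde{\mathcal N}_a|_{\mathcal D'_{M,\delta_a}}$, acting on distributional pairs supported in $M$ which annihilate $a$-potential pairs, is shown to have trivial kernel via an elliptic-regularity bootstrap (upgrading a kernel element to a smooth pair) followed by the injectivity of $\tilde{\mathcal I}_a$ on solenoidal pairs. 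Closed range plus dense range then gives exact surjectivity. Your version---realizing $\mathcal E_a[\alpha,f]$ exactly in the range of $\mathcal I^*_{a,\widetilde M}$---is the same surjectivity problem one level up, and compact support of the target does not by itself resolve the exactness issue.

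A secondary point: the final passage from the preimage to $w$ is not simply ``take the trace of the restricted transport solution.'' One takes a compactly supported (not $a$-solenoidal) pair $[\beta,h]$ with $r_M\widetilde{\mathcal N}_a[\beta,h]=[\alpha,f]$, forms the transport integral $\widetilde w$ of $[\beta,h]$ along complete geodesics of $\widetilde M$, and sets $w=(\widetilde U_{-\overline a}\widetilde w)|_{\p_+SM}$; the multiplicative relation $\widetilde U_{-\overline a}=U_{-\overline a}\cdot(\widetilde U_{-\overline a}|_{\p_+SM})_\psi$ is what converts the adjoint on $\widetilde M$ into the adjoint on $M$. Your sketch omits this integrating-factor correction, without which $\mathcal I_a^*(w)$ does not equal the restriction of $\tilde{\mathcal I}_a^*(\widetilde w)$.
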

Applying Theorem \ref{main surjectivity result} to a pair of the form $[\alpha,0]$ yields the following

\begin{Corollary}\label{main surjectivity result for solenoidal form}
Let $(M,g)$ be a simple surface and let $a\in C^\infty(M,\C)$. Suppose that $\alpha:TM\to \C$ is a smooth, solenoidal one-form. Then there is $w\in \mathcal S_{a}^\infty(\p_+SM,\C)$ such that $(I_{a}^1)^*(w)=\alpha$ and $(I_{a}^0)^*(w)=0$.
\end{Corollary}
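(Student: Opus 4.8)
The plan is to prove Theorem~\ref{main surjectivity result} by an extension-and-localization argument, following the strategy established in \cite{PeU2,PSU3,AinAs}. Given $[\alpha,f]\in\mathcal C_{a,\rm sol}^\infty(M,\C)$, the first step is to embed $M$ into a slightly larger simple surface $(\widetilde M,g)$ (still simple, with $M$ compactly contained in $\widetilde M^{\rm int}$), extend $a$ smoothly to $\widetilde M$, and use Proposition~\ref{extension of a-solenoidal pairs with compact support} to produce $[\widetilde\alpha,\widetilde f]=\mathcal E_a[\alpha,f]\in\mathcal C^\infty_{W,a,\rm sol}(\widetilde M^{\rm int},\C)$, an $a$-solenoidal pair supported in a precompact set $W\Subset\widetilde M^{\rm int}$ which agrees with $[\alpha,f]$ on $M$. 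The point of passing to $\widetilde M$ is that the relevant normal operator, built from $\widetilde{\mathcal I}_a$ and its adjoint, can be inverted microlocally on the interior where the data is supported.

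The second step is to realize $[\widetilde\alpha,\widetilde f]$ as $\widetilde{\mathcal I}_a^*$ of something. Here I would use the factorization idea already flagged in the outline: consider the operator $N_a:=\widetilde{\mathcal I}_{-\overline a}\widetilde{\mathcal I}_a^*$ (or the appropriately weighted version acting between pairs), which by the adjoint formulas \eqref{general formula for the adjoint}, \eqref{adjoints of I^0 and I^1} and the commutator formula \eqref{[H,X]} is a pseudodifferential operator of order $-1$ on $\widetilde M^{\rm int}$, elliptic on $a$-solenoidal pairs. The injectivity of $\widetilde{\mathcal I}_a$ on $\mathcal C^\infty_{a,\rm sol}$ — which follows from \cite[Theorem~1.2]{SaU} for simple surfaces — upgrades ellipticity to solvability: one solves $N_a[\text{something}]=[\widetilde\alpha,\widetilde f]$ modulo smoothing, then removes the smoothing error using injectivity together with the decomposition \eqref{decomp of the space of smooth pairs} (projecting onto the $a$-solenoidal part via Proposition~\ref{decomposition of pairs}). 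This produces $h\in L^2_\mu(\partial_+S\widetilde M,\C)$ with $\widetilde{\mathcal I}_a^* h$ equal to $[\widetilde\alpha,\widetilde f]$ on $W$, hence equal to $[\alpha,f]$ on $M$.

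The third step is to convert $h$ into the desired $w\in\mathcal S_a^\infty(\partial_+SM,\C)$. By \eqref{general formula for the adjoint}, $\widetilde{\mathcal I}_a^* h=U_{-\overline a}h_\psi$ on $S\widetilde M$, so the transport solution $\widetilde u:=U_{-\overline a}h_\psi$ satisfies $(X-\overline a)\widetilde u=0$ on $S\widetilde M$ with $[\widetilde u_{-1}+\widetilde u_1,\widetilde u_0]=[\alpha,f]$ on $SM$. Restricting to $SM$ and reading off the boundary value: set $w$ to be (a suitable holomorphic-content-adjusted version of) the trace of the relevant transport solution on $\partial_+SM$; the smoothness of $\widetilde u$ across $SM$ guarantees $w^\sharp\in C^\infty(SM,\C)$, i.e. $w\in\mathcal S_a^\infty(\partial_+SM,\C)$, using the characterization of $\mathcal S_a^\infty$ in terms of $Q_a$ from \cite[Lemma~5.1]{PSU1}. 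One must check that the $\mathcal I_a^*$ computed on $\partial_+SM$ from $w$ indeed recovers the restriction of $\widetilde{\mathcal I}_a^* h$ to $SM$; this is where the fact that $\widetilde f$ and $\widetilde\alpha$ are supported inside $W$ is used, so that geodesics of $\widetilde M$ exiting $M$ carry no further contribution relevant on $SM$ — more precisely, one relates $h_\psi$ on $SM$ to a genuine element of $L^2_\mu(\partial_+SM)$ via the scattering relation of $M$.

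The main obstacle is the second step: establishing that $N_a$ is elliptic as a $\Psi$DO \emph{on the $a$-solenoidal subspace} and that its microlocal parametrix, combined with the injectivity from \cite{SaU}, actually yields an exact (not merely approximate) solution in the smooth category. The subtlety is that $N_a$ has a kernel (the $a$-potential pairs) and that one must carefully track the $a$-solenoidal projector through the parametrix construction and the bootstrap of regularity — this is exactly the role of Proposition~\ref{decomposition of pairs} and the extension operators of Proposition~\ref{extension of a-solenoidal pairs with compact support}, ensuring the correction terms stay $a$-solenoidal and compactly supported. The complex-valued attenuation $a$ also forces the use of both $\mathcal I_a$ and $\mathcal I_{-\overline a}$ (equivalently, both holomorphic and antiholomorphic integrating factors), so the factorization must be set up with the correct conjugated weight, consistent with \eqref{general formula for the adjoint}. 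Once Theorem~\ref{main surjectivity result} is in hand, Corollary~\ref{main surjectivity result for solenoidal form} is immediate: apply it to the $a$-solenoidal pair $[\alpha,0]$ (which is $a$-solenoidal precisely when $\delta\alpha=0$, i.e. $\alpha$ solenoidal in the usual sense), and read off $(I_a^1)^*(w)=\alpha$, $(I_a^0)^*(w)=0$ from $\mathcal I_a^*w=[(I_a^1)^*w,(I_a^0)^*w]=[\alpha,0]$.
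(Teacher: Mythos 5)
Your deduction of the corollary coincides with the paper's: since $\delta_a[\alpha,0]=\delta\alpha-\overline{a}\cdot 0=\delta\alpha=0$, the pair $[\alpha,0]$ is $a$-solenoidal, and Theorem~\ref{main surjectivity result} supplies $w\in\mathcal S_{a}^\infty(\p_+SM,\C)$ with $\mathcal I_{a}^*w=[(I_{a}^1)^*w,(I_{a}^0)^*w]=[\alpha,0]$, which is all that is needed. The bulk of your writeup is really a sketch of Theorem~\ref{main surjectivity result} itself and follows the paper's strategy (extension to a larger simple surface, ellipticity of the normal operator on $a$-solenoidal pairs, injectivity from \cite{SaU}), up to the slip that the relevant normal operator on $\widetilde M^{\rm int}$ is $(\tilde{\mathcal I}_{a})^*\tilde{\mathcal I}_{a}$ rather than $\tilde{\mathcal I}_{-\overline a}\tilde{\mathcal I}_{a}^*$, the latter being an operator on boundary functions.
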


\begin{proof}[Proof of Theorem~\ref{main surjectivity result}]
We embed $M$ into the interior of a compact surface $\widetilde M$ with boundary and extend the metric $g$ to $\widetilde M$ and keep the same notation for the extension, choosing $(\widetilde M,g)$ to be sufficiently close to $(M,g)$ so that it remains simple. We also extend the attenuation coefficient $a$ to $\widetilde M$ smoothly and to be real valued, and keep the same notation for the extensions.

Before proceeding further let us introduce more conventions which will be used in this section. If $A$ is a notation for some object in the context of the surface $(M,g)$, then by $\widetilde A$ we denote the same object but in the context of the extended surface $(\widetilde M,g)$. For example, the notation $\tilde{\mathcal I}_a$ will denote the ray transform with domain $\mathcal L^2(\widetilde M,\C)$ and $\widetilde{\mathcal N}_{a}:=(\tilde{\mathcal I}_{a})^*\tilde{\mathcal I}_{a}$.

According to Remark~\ref{image of I^* is in potential space}, we have
\begin{equation}\label{delta_a N_a=0}
\delta_a\, \widetilde{\mathcal N}_{a}=0.
\end{equation}

Let $r_M$ denote the restriction operator from $\widetilde M$ to $M$. We show the following:
\begin{Lemma}\label{normal operator is surjective}
The operator
\begin{equation}\label{r_MN}
r_M \widetilde{\mathcal N}_{a}:\mathcal C_{0}^\infty(\widetilde M^{\rm int},\C)\to \mathcal C_{a,\rm sol}^\infty(M,\C)
\end{equation}
is surjective.
\end{Lemma}

Assuming this result, we give the proof of Theorem~\ref{main surjectivity result}. Suppose that $[\alpha,f]\in \mathcal C_{a,\rm sol}^\infty(M,\C)$. Then Lemma~\ref{normal operator is surjective} ensure the existence of $[\beta,h]\in\mathcal C_{0}^\infty(\widetilde M^{\rm int},\C)$ such that
$$
[\alpha,f]=r_M \widetilde{\mathcal N}_a[\beta,h]=r_M(\tilde{\mathcal I}_a)^*\tilde{\mathcal I}_a[\beta,h].
$$
Recall that $\widetilde U_{a}$ is the unique solution to the initial value problem
$$
(X+a)\widetilde U_{a}=0\text{ in }S\widetilde M,\quad \widetilde U_{a}|_{\p_+S\widetilde M}=1.
$$
The integral expression for $\widetilde U_{a}$ is as follows:
$$
\widetilde U_{a}(x,v)=\exp\(-\int^0_{-\tilde\tau(x,-v)}a(\gamma_{x,v}(s))\,ds\),\quad (x,v)\in S\widetilde M.
$$
Here $\tilde\tau(x,v)$ is the unique non-negative time when the geodesic $\gamma_{x,v}$, with $\gamma_{x,v}(0)=x$ and $\dot\gamma_{x,v}(0)=v$, exits $\widetilde M$. Let us now define
$$
\widetilde w(x,v):=\int_{-\tilde\tau(x,-v)}^{\tilde\tau(x,v)}\widetilde U_{a}^{-1}(\gamma_{x,v}(t),\dot\gamma_{x,v}(t))\left\{\beta_j(\gamma_{x,v}(t))\,\dot\gamma_{x,v}^j(t)+h(\gamma_{x,v}(t))\right\}\,dt.
$$
Note that $\widetilde w\in C^\infty(S\widetilde M^{\rm int},\C)$. From the definition, one can show that
$$
\tilde{\mathcal I}_{a}[\beta,h]=\widetilde w|_{\p_+S\widetilde M}.
$$
Using the formulas \eqref{adjoint of I^0} and \eqref{adjoint of I^1} for the adjoints and using that $\tilde{\mathcal I}_{a}=\tilde I_{a}^1+\tilde I_{a}^0$, we can obtain
\begin{align*}
[\alpha,f]&=r_M(\tilde{\mathcal I}_{a})^*\tilde{\mathcal I}_{a}[\beta,h]\\
&=\(\pi(U_{-\overline a}\widetilde w)_{-1}+2\pi(U_{-\overline a}\widetilde w)_0+\pi(U_{-\overline a}\widetilde w)_1\)\big|_{SM}\\
&=({\mathcal I}_{a})^*(\widetilde U_{-\overline a}\widetilde w)|_{\p_+SM}.
\end{align*}
In the last step we used the fact that $\widetilde U_{-\overline a}$ and $U_{-\overline a}$ are related by
$$
\widetilde U_{-\overline a}(x,v)=U_{-\overline a}(x,v)(\widetilde U_{-\overline a}|_{\p_+SM})_\psi(x,v)\text{ for all }(x,v)\in SM.
$$
This is easy to see from the integral expressions for $\widetilde U_{-\overline a}$ and $U_{-\overline a}$. Setting $w=(\widetilde U_{-\overline a}\widetilde w)|_{\p_+SM}$ we finish the proof.
\end{proof}

To prove Lemma~\ref{normal operator is surjective} we need the following result.

\begin{Lemma}\label{normal operator is elliptic PDO}
The normal operator $\widetilde{\mathcal N}_{a}$ is a pseudodifferential operator of order $-1$ in $\widetilde M^{\rm int}$. Moreover, $\widetilde{\mathcal N}_{a}$ is elliptic on $a$-solenoidal pairs.
\end{Lemma}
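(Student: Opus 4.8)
The plan is to realize $\widetilde{\mathcal N}_a=(\tilde{\mathcal I}_a)^*\tilde{\mathcal I}_a$ as an integral operator whose Schwartz kernel is classically conormal to the diagonal in $\widetilde M^{\rm int}\times\widetilde M^{\rm int}$, then to compute its principal symbol. This is the attenuated analogue of the computations in \cite{PeU1,PeU2,PSU3}; the attenuation will enter only through smooth weights that equal $1$ on the diagonal and hence will not affect the principal part.

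First I would write down the kernel. Fix a cutoff $\chi\in C_c^\infty(\widetilde M^{\rm int})$; since $(\widetilde M,g)$ is simple, for $z$ in the interior the exit time $\tilde\tau(z,\cdot)$ and the geodesic flow are smooth and $\exp_z$ has no conjugate points. Combining the integral representation of $\tilde{\mathcal I}_a$ with the adjoint formulas \eqref{general formula for the adjoint}, \eqref{adjoint of I^0}, \eqref{adjoint of I^1} and Santal\'o's formula, $\widetilde{\mathcal N}_a[\beta,h](z)$ becomes a double integral, over $v\in S_z\widetilde M$ and over the signed arclength $t$ along $\gamma_{z,v}$, of $[\beta,h]$ evaluated at $\gamma_{z,v}(t)$, weighted by a product of the integrating factors $\widetilde U_{\pm\overline a}$ and by matrix entries of the form $\{v^iv^j,\,v^i,\,1\}$ coming from the one-form/function blocks. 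Changing variables $y=\exp_z(tv)$ — a two-to-one map onto a neighbourhood of $z$ whose Jacobian is $\sim d_g(z,y)^{-(n-1)}=d_g(z,y)^{-1}$ in dimension two — turns this into
$$
\widetilde{\mathcal N}_a[\beta,h](z)=\int_{\widetilde M} K_a(z,y)\begin{pmatrix}\beta(y)\\ h(y)\end{pmatrix}dy,
$$
with $K_a(z,y)$ a $3\times3$ matrix which, in normal coordinates centred at $z$, is $d_g(z,y)^{-1}A_a(z,\tfrac{\exp_z^{-1}(y)}{|\exp_z^{-1}(y)|})$ plus a less singular remainder, $A_a$ smooth and carrying the integrating factors, which by their explicit integral expressions are smooth in $(z,y)$ and satisfy $A_a(z,\cdot)\to A_0(z,\cdot)$ as the relevant geodesic shrinks to a point. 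This is precisely the amplitude of a classical pseudodifferential operator of order $-1$; the absence of conjugate points guarantees $K_a$ is smooth off the diagonal, so there are no further singularities. (All of this is legitimate for $z$ in the support of $\chi$, which is why the statement is ``in $\widetilde M^{\rm int}$''.)

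Next I would extract the principal symbol $\sigma_{-1}(\widetilde{\mathcal N}_a)(z,\xi)$ by fibrewise Fourier transform of the leading part of $K_a$. Because the attenuation weight equals $1$ on the diagonal, $\sigma_{-1}(\widetilde{\mathcal N}_a)=\sigma_{-1}(\widetilde{\mathcal N}_0)$, so I may quote the unattenuated computation of \cite{PeU1,PeU2,PSU3}: $\sigma_{-1}(\widetilde{\mathcal N}_a)(z,\xi)$ is an explicit Hermitian matrix homogeneous of degree $-1$ in $\xi$, whose kernel at each $(z,\xi)$ is the one-dimensional line spanned by the value of the principal symbol of $d_a$. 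Since $d_a h=[dh,ah]$ and $ah$ is of lower order, that principal symbol is $h\mapsto[i\xi h,0]$, so $\ker\sigma_{-1}(\widetilde{\mathcal N}_a)(z,\xi)=\{[c\,\xi,\,0]:c\in\C\}$. On the other hand $\delta_a[\alpha,f]=\delta\alpha-\overline a f$ has leading part $\langle\hat\alpha,\xi\rangle$, so the symbol-level $a$-solenoidal pairs are $\{[\hat\alpha,\hat f]:\langle\hat\alpha,\xi\rangle=0\}$, a two-dimensional space complementary to the potential line; and by $\delta_a\widetilde{\mathcal N}_a=0$ (see \eqref{delta_a N_a=0} and Remark \ref{image of I^* is in potential space}) the range of $\sigma_{-1}(\widetilde{\mathcal N}_a)(z,\xi)$ lies in that same solenoidal subspace. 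Hence $\sigma_{-1}(\widetilde{\mathcal N}_a)(z,\xi)$ restricts to an isomorphism of the symbol-level solenoidal pairs, which is exactly the assertion that $\widetilde{\mathcal N}_a$ is elliptic on $a$-solenoidal pairs.

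The main obstacle I anticipate is the bookkeeping of step two: making the change of variables $y=\exp_z(tv)$ precise, identifying the exact homogeneous leading term of $K_a$ (including the $v^iv^j$, $v^i$ matrix structure and the factor $d_g(z,y)^{-1}$, with due care about parity under $v\mapsto -v$), and verifying that the remainder and the attenuation weights are smooth up to the diagonal. Once the conormal structure of $K_a$ is established, everything else follows from the standard symbol calculus together with the already-known unattenuated symbol computation.
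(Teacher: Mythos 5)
Your overall strategy --- realize $\widetilde{\mathcal N}_a$ as an operator whose Schwartz kernel is conormal to the diagonal, of the form $d_g(z,y)^{-1}A_a$ plus lower order, read off that it is a $\Psi$DO of order $-1$, and then argue ellipticity by identifying the kernel of the principal symbol with the symbol-level potential line --- is essentially the paper's (which invokes \cite[Lemma~B.1]{DPSU} for the conormal structure rather than redoing the change of variables). But there is a genuine error at the key step: the attenuation weight does \emph{not} equal $1$ on the diagonal. The kernel of, say, $\widetilde{\mathcal N}_a^{00}$ at the pair $(x,\gamma_{x,v}(t))$ carries the factor $\widetilde U_{-\overline a}(x,v)\,\widetilde U_a^{-1}(\varphi_t(x,v))$, and at $t=0$ this equals $\exp\bigl(\int_{-\tilde\tau(x,-v)}^{0}2\Re a(\gamma_{x,v}(s))\,ds\bigr)=\widetilde U_{-2\Re a}(x,v)$, the exponential of the integral of $2\Re a$ from the backward exit point to $x$ --- a direction-dependent, generally non-constant quantity. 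Hence $\sigma_{-1}(\widetilde{\mathcal N}_a)\neq\sigma_{-1}(\widetilde{\mathcal N}_0)$ unless $\Re a\equiv 0$: the paper computes $\sigma_p(\widetilde{\mathcal N}_a^{00})(x,\xi)=2\pi\int_{S_x\widetilde M}\delta(\langle\omega,\xi\rangle_{g(x)})\,\widetilde U_{-2\Re a}(x,\omega)\,d\sigma_x(\omega)$ and similarly for the other blocks, and since $\widetilde U_{-2\Re a}(x,\omega)$ is not even in $\omega$, the off-diagonal blocks $\sigma_p(\widetilde{\mathcal N}_a^{10})$ and $\sigma_p(\widetilde{\mathcal N}_a^{01})$ no longer vanish by parity as they do for $a=0$. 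So you cannot simply quote the unattenuated symbol computation.

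The conclusion survives, and the repair is short: the quadratic form of the true symbol is $2\pi\int_{S_x\widetilde M}|\hat\alpha_i\omega^i+\hat f|^2\,\delta(\langle\omega,\xi\rangle_{g(x)})\,\widetilde U_{-2\Re a}(x,\omega)\,d\sigma_x(\omega)$ with a \emph{strictly positive} weight, so its vanishing forces $\hat\alpha_i\omega^i+\hat f=0$ for both elements $\pm\omega_0$ of $S_{x,\xi}$, whence $\hat f=0$ and $\hat\alpha\parallel\xi$; combined with the vanishing of $\sigma_p(d_a\Lambda\delta_a)[\hat\alpha,\hat f]$, which forces $\langle\hat\alpha,\xi\rangle=0$, one gets $[\hat\alpha,\hat f]=0$. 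This is exactly how the paper verifies its definition of ``elliptic on $a$-solenoidal pairs,'' namely injectivity of the symbol of the system $\diag(d_a\Lambda\delta_a,\widetilde{\mathcal N}_a)$. Your closing argument via the restriction of the symbol to the solenoidal subspace is an equivalent packaging, but it must be run with the weighted symbol, not the unattenuated one.
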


We say that $\widetilde{\mathcal N}_{a}$ is \emph{elliptic on $a$-solenoidal pairs}, if $\diag(d_a\Lambda \delta_a,\widetilde{\mathcal N}_{a})$, acting on pairs, is elliptic (as a system of pseudodifferential operators of order $-1$), where $\Lambda$ is a proper pseudodifferential operator on $\widetilde M^{\rm int}$ with principal symbol $1/|\xi|^3$. Recall that $\diag(d_a\Lambda \delta_a,\widetilde{\mathcal N}_{a})$ is an elliptic system if $\det\sigma_p(\diag(d_a\Lambda \delta_a,\widetilde{\mathcal N}_{a}))(x,\xi)\neq 0$ for $(x,\xi)\in T\widetilde M\setminus \{0\}$; see \cite[page~46]{Shubin}.

\begin{proof}
First, we prove that $\widetilde{\mathcal N}_{a}$ is a pseudodifferential operator of order $-1$ in $\widetilde M^{\rm int}$. Recall that by $\widetilde U_{a}$ we denote the unique solution to
$$
(X+a)\widetilde U_{a}=0\quad (S\widetilde M),\qquad \widetilde U_{a}|_{\p_+S\widetilde M}=1.
$$
Recall also that the normal operator is as follows $\widetilde{\mathcal N}_{a}:\mathcal L^2(\widetilde M,\C)\to\mathcal L^2(\widetilde M,\C)$. Therefore, we introduce the following notation
$$
\widetilde{\mathcal N}_{a}[\alpha,f]=[\,\widetilde{\mathcal N}_{a}^{11}\alpha+\widetilde{\mathcal N}_{a}^{10}f\,,\,\widetilde{\mathcal N}_{a}^{01}\alpha+\widetilde{\mathcal N}_{a}^{00}f],
$$
with
$$
\widetilde{\mathcal N}_{a}^{11}:=(\tilde I_{a}^1)^*\tilde I_{a}^1,\quad \widetilde{\mathcal N}_{a}^{10}:=(\tilde I_{a}^1)^*\tilde I_{a}^0,\quad \widetilde{\mathcal N}_{a}^{01}:=(\tilde I_{a}^0)^*\tilde I_{a}^1,\quad \widetilde{\mathcal N}_{a}^{00}:=(\tilde I_{a}^0)^*\tilde I_{a}^0.
$$
Using \eqref{adjoints of I^0 and I^1}, one can show that
\begin{align*}
\(\widetilde{\mathcal N}_{a}^{11}\alpha\){}^{i'}(x)&=\int_{S_x\widetilde M}v^{i'}\widetilde U_{-\overline a}(x,v)\int_{-\tilde \tau(x,-v)}^{\tilde \tau(x,v)}\widetilde U_{a}^{-1}(\varphi_{x,v}(t))\alpha_i(\gamma_{x,v}(t))\dot\gamma_{x,v}^i(t)\,dt\,d\sigma_x(v),\\
\(\widetilde{\mathcal N}_{a}^{10}f\){}^{i'}(x)&=\int_{S_x\widetilde M}v^{i'}\widetilde U_{-\overline a}(x,v)\int_{-\tilde \tau(x,-v)}^{\tilde \tau(x,v)}\widetilde U_{a}^{-1}(\varphi_{x,v}(t))f(\gamma_{x,v}(t))\,dt\,d\sigma_x(v),\\
\(\widetilde{\mathcal N}_a^{01}\alpha\)(x)&=\int_{S_x\widetilde M}\widetilde U_{-\overline a}(x,v)\int_{-\tilde \tau(x,-v)}^{\tilde \tau(x,v)}\widetilde U_{a}^{-1}(\varphi_{x,v}(t))\alpha_i(\gamma_{x,v}(t))\dot\gamma_{x,v}^i(t)\,dt\,d\sigma_x(v),\\
\(\widetilde{\mathcal N}_{a}^{00}f\)(x)&=\int_{S_x\widetilde M}\widetilde U_{-\overline a}(x,v)\int_{-\tilde \tau(x,-v)}^{\tilde \tau(x,v)}\widetilde U_{a}^{-1}(\varphi_{x,v}(t))f(\gamma_{x,v}(t))\,dt\,d\sigma_x(v).
\end{align*}
Following \cite{FSU,HS}, we use \cite[Lemma~B.1]{DPSU} to deduce that $\widetilde{\mathcal N}_{a}$ is a pseudodifferential operator of order $-1$, and the principal symbols of the above operators are as follows:
\begin{align*}
\sigma_p(\widetilde{\mathcal N}_{a}^{11})^{i'i}(x,\xi)&=2\pi\int_{S_x\widetilde M}\omega^{i'}\omega^{i}\delta(\<\omega,\xi\>_{g(x)})\widetilde U_{-2\Re(a)}(x,\omega)\,d\sigma_x(\omega),\\
\sigma_p(\widetilde{\mathcal N}_{a}^{10})^{i'}(x,\xi)&=2\pi\int_{S_x\widetilde M}\omega^{i'}\delta(\<\omega,\xi\>_{g(x)})\widetilde U_{-2\Re(a)}(x,\omega)\,d\sigma_x(\omega),\\
\sigma_p(\widetilde{\mathcal N}_{a}^{01})^{i}(x,\xi)&=2\pi\int_{S_x\widetilde M}\omega^i\delta(\<\omega,\xi\>_{g(x)})\widetilde U_{-2\Re(a)}(x,\omega)\,d\sigma_x(\omega),\\
\sigma_p(\widetilde{\mathcal N}_{a}^{00})(x,\xi)&=2\pi\int_{S_x\widetilde M}\delta(\<\omega,\xi\>_{g(x)})\widetilde U_{-2\Re(a)}(x,\omega)\,d\sigma_x(\omega).
\end{align*}
Now, we prove ellipticity. For this, note that the ellipticity of $\diag(d_a\Lambda \delta_a,\widetilde{\mathcal N}_{a})$ is equivalent to saying that the principal symbol $\sigma_p(\diag(d_a\Lambda \delta_a,\widetilde{\mathcal N}_{a}))(x,\xi)$, acting on pairs, is injective for every $(x,\xi)\in T\widetilde M\setminus \{0\}$; see the comments preceding \cite[Definition~7.1]{Trev73}. Assume that $\sigma_p(\widetilde{\mathcal N}_{a})[\alpha,f]=0$ and $\sigma_p(d_a\Lambda \delta_a)[\alpha,f]=0$ for some $x$ and $\xi\neq 0$. Then it follows that
\begin{equation}\label{alpha at xi is zero}
\alpha_i(x)\xi^i=0
\end{equation}
and
\begin{align*}
(\sigma_p(\widetilde{\mathcal N}_{a})[\alpha,f],[\alpha,f])_{g}&=2\pi\int_{S_x \widetilde M}|\alpha_{i}(x)\omega^i+f(x)|^2\delta(\<\omega,\xi\>_{g(x)})\widetilde U_{-2\Re(a)}(x,\omega)\,d\sigma_x(\omega)\\
&=0,
\end{align*}
where the inner product $(\cdot,\cdot)_g$ is as in \eqref{inner product for pairs} before the integration. Note that $U_{-2\Re(a)}>0$ and that the set $S_{x,\xi}:=\{\omega\in S_x\widetilde M:\<\omega,\xi\>_{g(x)}=0\}$ is non-empty. Therefore, for all such $\omega$, we get
\begin{equation}\label{sum of pair is zero for all perp to xi}
\alpha_{i}(x)\omega^i+f(x)=0.
\end{equation}
Since $-\omega$ is also in $S_{x,\xi}$, we have
$$
-\alpha_{i}(x)\omega^i+f(x)=0.
$$
These two equalities imply that $f(x)=0$. Then \eqref{alpha at xi is zero} and \eqref{sum of pair is zero for all perp to xi} show that $\alpha(x)=0$. Thus, $\widetilde{\mathcal N}_{a}$ is elliptic on $a$-solenoidal pairs.
\end{proof}

Now we give the proof of Lemma~\ref{normal operator is surjective}.

\begin{proof}[Proof of Lemma~\ref{normal operator is surjective}]
For the proof we closely follow the arguments in \cite{DaU}. First, we will show that $r_M \widetilde{\mathcal N}_{a}$ has closed range. Since $\diag(\widetilde{\mathcal N}_{a},d_a\Lambda \delta_a)$, acting on pairs, is elliptic, there is a parametrix
$$
P=\(\begin{matrix}
X&Y\\
Z&T
\end{matrix}\)
$$
such that
\begin{equation}\label{eqn::CP=id}
\diag(\widetilde{\mathcal N}_{a},d_a\Lambda \delta_a)P=\(\begin{matrix}
\widetilde{\mathcal N}_{a}X&\widetilde{\mathcal N}_{a}Y\\
d_a\Lambda \delta_a Z&d_a\Lambda \delta_a T
\end{matrix}\)\equiv \id,
\end{equation}
and
\begin{equation}\label{eqn::PC=id}
P\diag(\widetilde{\mathcal N}_{a},d_a\Lambda \delta_a)=\(\begin{matrix}
X\widetilde{\mathcal N}_{a}&Yd_a\Lambda \delta_a\\
Z\widetilde{\mathcal N}_{a}&Td_a\Lambda \delta_a
\end{matrix}\)\equiv \id,
\end{equation}
where $\equiv$ means equivalence up to a smoothing operator.

Let us use the convention that for two pairs of operators the multiplication is defined as
$$
(A,B)(C,D)=AC+BD=(A,0)(C,0)+(0,B)(0,D).
$$
If we denote $C_a:=(\widetilde{\mathcal N}_{a},d_a\Lambda \delta_a)$, then from \eqref{eqn::CP=id} and \eqref{eqn::PC=id} there is a pair of pseudodifferential operators $(A,B)$ such that
\begin{equation}\label{ABC=1}
\begin{aligned}
(A,B)C_a&=(A,0)(\widetilde{\mathcal N}_{a},0)+(0,B)(0,d_a\Lambda \delta_a)\equiv\id,\\
C_a(A,B)&=(\widetilde{\mathcal N}_{a},0)(A,0)+(0,d_a\Lambda \delta_a)(0,B)\equiv\id.
\end{aligned}
\end{equation}
In fact, $A=\frac12 X$ and $B=\frac12 T$. Using \eqref{delta_a N_a=0}, we show that
$$
-\delta_a C_a=(-\delta_a\widetilde{\mathcal N}_{a},-\delta_a d_a\Lambda \delta_a)=(0,-\delta_a d_a\Lambda \delta_a)=-\delta_a d_a(0,\Lambda \delta_a).
$$
The operator $-\delta_a d_a$ is $-\Delta_g+|a|^2$, which has a proper parametrix $(-\Delta_g+|a|^2)^{-1}$. Then
$$
(0,\Lambda \delta_a)=-(-\Delta_g+|a|^2)^{-1}\delta_a C_a.
$$
Therefore,
$$
(\widetilde{\mathcal N}_{a},0)(A,0)-d_a(-\Delta_g+|a|^2)^{-1}\delta_a C_a(0,B)\equiv\id.
$$
Since $C_a(0,B)=C_a(A,B)-(\widetilde{\mathcal N}_a,0)(A,0)\equiv \id-(\widetilde{\mathcal N}_a,0)(A,0)$ and $\delta_a \widetilde{\mathcal N}_a=0$, this imply that
$$
\widetilde{\mathcal N}_{a}A-d_a(-\Delta_g+|a|^2)^{-1}\delta_a=\id+K,
$$
where $K$ is a smoothing operator in $\widetilde M^{\rm int}$. Restricting to $\mathcal C_{0,a,{\rm sol}}^\infty(\widetilde M^{\rm int},\C)$, we obtain
$$
\widetilde{\mathcal N}_{a}A[\alpha,f]=[\alpha,f]+K[\alpha,f],\quad\text{for all}\quad [\alpha,f]\in\mathcal C_{0,a,{\rm sol}}^\infty(\widetilde M^{\rm int},\C).
$$
By Proposition~\ref{extension of a-solenoidal pairs with compact support}, there is a precompact neighborhood $W$ of $M$ in $\widetilde M^{\rm int}$ such that there is a bounded map $\mathcal E_a:\mathcal H^1_{a,{\rm sol}}(M,\C)\to \mathcal H^1_{W,a,{\rm sol}}(\widetilde M^{\rm int},\C)$ such that $\mathcal E_a|_{M}=\id$ and $\mathcal E_a(\mathcal C^\infty_{a,{\rm sol}}(M,\C))\subset \mathcal C^\infty_{W,a,{\rm sol}}(\widetilde M^{\rm int},\C)$. Then we have on $\mathcal H^1_{a,{\rm sol}}(M,\C)$
$$
r_M\widetilde{\mathcal N}_{a}A\mathcal E_a=\id+r_M K\mathcal E_a.
$$
Since $K$ is smoothing in $\widetilde M^{\rm int}$, the operator $r_M K\mathcal E_a$ is compact. Hence, the operator $\id+r_M K\mathcal E_a:\mathcal H^1_{a,{\rm sol}}(M,\C)\to \mathcal H^1_{a,{\rm sol}}(M,\C)$ has closed and finite codimensional range. Since $K$ is smoothing, we also get that the operator $\id+r_M K\mathcal E_a:\mathcal C^\infty_{a,{\rm sol}}(M,\C)\to \mathcal C^\infty_{a,{\rm sol}}(M,\C)$ also has closed and finite codimensional range. Therefore, $r_M\widetilde{\mathcal N}_{a}A\mathcal E_a(\mathcal C^\infty_{a,{\rm sol}}(M,\C))$ is closed and has finite codimension in $\mathcal C^\infty_{a,{\rm sol}}(M,\C)$. Since
$$
r_M\widetilde{\mathcal N}_{a}A\mathcal E_a(\mathcal C^\infty_{a,{\rm sol}}(M,\C))\subset r_M\widetilde{\mathcal N}_{a}(\mathcal C^\infty_{0}(\widetilde M^{\rm int},\C))\subset \mathcal C^\infty_{a,{\rm sol}}(M,\C),
$$
the intermediate space $r_M\widetilde{\mathcal N}_{a}(\mathcal C^\infty_{0}(\widetilde M^{\rm int},\C))$ is also closed in $\mathcal C^\infty_{a,{\rm sol}}(M,\C)$.
\medskip

Next, we show that the adjoint operator $(r_M \widetilde{\mathcal N}_{a})^*$ has trivial kernel. According to \eqref{decomp of the space of smooth pairs}, each functional on $\mathcal C^\infty_{a,{\rm sol}}(M,\C)$ gives rise to a functional on $\mathcal C^\infty(M,\C)$ that vanishes on $\mathcal C^\infty_{a,{\rm pot}}(M,\C)$. Therefore, the dual of $\mathcal C^\infty_{a,{\rm sol}}(M,\C)$ is
\begin{align*}
\mathcal D&'_{M,\delta_a}(\widetilde M^{\rm int},\C)\\
&=\{[\alpha,f]\in \mathcal D'(\widetilde M^{\rm int},\C):\supp[\alpha,f]\subset M,\,\<[\alpha,f]\,|\,[\tilde \beta,\tilde h]\>=0, \forall[\beta,h]\in \mathcal C^\infty_{a,{\rm pot}}(M,\C)\},
\end{align*}
where $[\tilde \beta,\tilde h]\in\mathcal C^\infty(\widetilde M^{\rm int})$ is any extension of $[\beta,h]$ from $M$ to $\widetilde M^{\rm int}$. Then dual operator of \eqref{r_MN} is
\begin{equation}\label{r_MN*}
(r_M \widetilde{\mathcal N}_a)^*:\mathcal D'_{M,\delta_a}(\widetilde M^{\rm int},\C)\to \mathcal D'(\widetilde M^{\rm int}).
\end{equation}
For all $[\alpha,f]\in \mathcal D'_{M,\delta_a}(\widetilde M^{\rm int},\C)$ and $[\beta,h]\in \mathcal C^\infty(\widetilde M^{\rm int})$
$$
\<(r_M \widetilde{\mathcal N}_a)^*[\alpha,f]\,|\,[\beta,h]\>=\<[\alpha,f]\,|\,(r_M \widetilde{\mathcal N}_a[\beta,h])\widetilde{\,\,\,\,}\>=\<[\alpha,f]\,|\,\widetilde{\mathcal N}_a[\beta,h]\>=\<\widetilde{\mathcal N}_a[\alpha,f]\,|\,[\beta,h]\>.
$$
Hence,
$$
(r_M \widetilde{\mathcal N}_a)^*=\widetilde{\mathcal N}_a|_{\mathcal D'_{M,\delta_a}(\widetilde M^{\rm int},\C)}.
$$

Suppose now that $[\alpha,f]\in \mathcal D'_{M,\delta_a}(\widetilde M^{\rm int},\C)$ is in the kernel of $\widetilde{\mathcal N}_a$. Then from the definition of the space $\mathcal D'_{M,\delta_a}(\widetilde M^{\rm int},\C)$ it follows that
\begin{equation}\label{singsupp of delta_a alpha f}
\sing\supp\delta_a[\alpha,f]\subset\p M.
\end{equation}
Decomposing $[\alpha,f]=[\beta,h]+d_a b$ with $\delta_a[\beta,h]=0$, we have
$$
\delta_a d_a b=\delta_a[\alpha,f].
$$
Since $-\delta_a d_a=-\Delta_g+|a|^2$ is an elliptic operator, \eqref{singsupp of delta_a alpha f} implies
\begin{equation}\label{singsupp b}
\sing\supp b\subset\p M.
\end{equation}
Since $[\alpha,f]$ is supported in $M$, from the decomposition $[\alpha,f]=[\beta,h]+d_a b$ and \eqref{singsupp b} we say
\begin{equation}\label{singsupp beta h}
\sing\supp [\beta,h]\subset M.
\end{equation}
Now consider a smooth function $p$ on $\widetilde M$ equal to $b$ in a neighborhood of $\p \widetilde M$. Then
$$
\mathcal{\widetilde N}_a d_a p=\mathcal{\widetilde N}_a d_a b,
$$
and hence
\begin{equation}\label{N beta h=-Ndp}
\mathcal{\widetilde N}_a[\beta,h]=\mathcal{\widetilde N}_a[\alpha,f]-\mathcal{\widetilde N}_a d_a b=-\mathcal{\widetilde N}_a d_a p.
\end{equation}
This implies that $\mathcal{\widetilde N}_a[\beta,h]$ is smooth in $\widetilde M^{\rm int}$. Now using the fact that $\delta_a[\beta,h]=0$ and \eqref{ABC=1}, we obtain that $[\beta,h]$ is smooth in $\widetilde M^{\rm int}$ and hence according to \eqref{singsupp beta h}, we conclude that $[\beta,h]$ is smooth on $\widetilde M$.
\medskip

By \eqref{N beta h=-Ndp}, we have $\mathcal{\widetilde N}_a\([\beta,h]+d_a p\)=0$ with $[\beta,h]\in\mathcal C^\infty(\widetilde M,\C)$ and $p\in C^\infty(\widetilde M,\C)$. Then $\tilde{\mathcal I}_a\([\beta,h]+d_a p\)=0$ and hence, by the injectivity result \cite[Theorem 1.2]{SaU}, $[\beta,h]+d_a p=d_a q$ for some $q\in C^\infty(\widetilde M,C)$ with $q|_{\p \widetilde M}=0$. This, combined with the decomposition $[\alpha,f]=[\beta,h]+d_a b$, gives
$$
[\alpha,f]=-d_a p+d_a q+d_a b.
$$
Therefore, for every $[\gamma,v]\in\mathcal C^\infty_{a,{\rm sol}}(M,\C)$ we have
$$
\<[\alpha,f]\,|\,[\tilde\gamma,\tilde v]\>=\<[d_a(-p+q+b)\,|\,[\tilde\gamma,\tilde v]\>=-\<(-p+q+b)\,|\,\delta_a[\tilde\gamma,\tilde v]\>,
$$
where $[\tilde\gamma,\tilde v]\in\mathcal C^\infty_0(\widetilde M^{\rm int},\C)$ is any extension of $[\gamma,v]$. By Proposition~\ref{extension of a-solenoidal pairs with compact support}, we can take $[\tilde\gamma,\tilde v]$ to satisfy $\delta_a[\tilde\gamma,\tilde v]=0$. Therefore, $[\alpha,f]$ annihilates $\mathcal C^\infty_{a,{\rm sol}}(M,\C)$. By the definition of $\mathcal D'_{M,\delta_a}(\widetilde M^{\rm int},\C)$ we then have $[\alpha,f]=0$.
\end{proof}

\section{Proofs of Theorems \ref{main} and \ref{thm:charac1}}
Following \cite{PeU2,PSU3}, we start with deriving the appropriate factorization for the operator $P_a$. Suppose $w\in\mathcal S_{a}^\infty(\p_+SM,\C)$. Then $w^\sharp$ is a smooth solution of the transport equation $(X+a)w^\sharp=0$. Applying commutator formula \eqref{[H,X]} to $w^\sharp$, we obtain
$$
-(X+a) H w^\sharp=X_\perp w^\sharp_0+(X_\perp w^\sharp)_0.
$$
Note that $X_\perp w^\sharp_0=\star dw^\sharp_0$. Since $X_\perp=i(\eta_--\eta_+)$, using \cite[Lemma~6.2]{PSU3}, we also have
$$
(X_\perp w^\sharp)_0=i(\eta_-w^\sharp_1-\eta_+w^\sharp_{-1})=\frac{1}{2}\star d(w^\sharp_{-1}+w^\sharp_1).
$$
Therefore,
$$
-2\pi (X+a) H w^\sharp=2\pi \star dw^\sharp_0+\pi \star d(w^\sharp_{-1}+w^\sharp_1).
$$
Applying $I_{a}$ to the above equality and using the expressions for the adjoint of the ray transform
in \eqref{adjoint of I^0} and \eqref{adjoint of I^1}, we deduce
\begin{equation}\label{final equation}
    -2\pi P_{a} w= \cI_{a}[\star d(I_{-\overline{a}}^0)^*(w)\,,\,\star d(I_{-\overline{a}}^1)^*(w)] = \cI_a\ \left[ \begin{smallmatrix} 0 & \star d \\ \star d & 0 \end{smallmatrix} \right] \ \cI_{-\overline{a}}^* w.
\end{equation}

\begin{proof}[Proof of Theorem~\ref{thm:charac1}]
    {\bf Proof of Claim (1).} Suppose that $u=P_{a}w$ for some $w\in \mathcal S_a^\infty(\p_+ SM,\C)$ with $w^\sharp_0=0$. According to \eqref{adjoint of I^0}, $w^\sharp_0=0$ is equivalent to saying $(I^0_{-a})^*(w)=0$,  the factorization \eqref{final equation} shows that $u$ belongs to the range of $I^0_{a}$.
    \medskip
    
    Conversely, suppose $u=I_{a}^0f$ for some $f\in C^\infty(M,\C)$. By basic properties of the Hodge star $\star$, we know that $f=\star(f\,d\Vol_g)$. Since $M$ is simply connected and $f\,d\Vol_g$ is closed, there is a smooth one-form $\alpha$ such that $d\alpha=f\,d\Vol_g$. Recall that $\alpha$ can be written as $\alpha=\alpha^s+dh$ where $\alpha^s$ is solenoidal and $h\in C^\infty(M,\C)$ such that $h|_{\p M}=0$. Then $d\alpha=d\alpha^s$, since $d^2=0$. Therefore, without loss of generality, we can assume $\alpha$ to be solenoidal. Thus, we have $u=I_a^0\star d\alpha$ with $\alpha$ being solenoidal. By Corollary~\ref{main surjectivity result for solenoidal form} there is $w\in\mathcal S_{a}^\infty(\p_+ SM,\C)$ such that $(I_{-\overline{a}}^1)^*(w)=\alpha$ and $(I_{-\overline{a}}^0)^*(w)=0$. Using \eqref{final equation}, we can conclude that
    $$
    u=I_a^0\star d(I_{-\overline{a}}^1)^*(w)=\mathcal I_a[\star d(I_{-\overline{a}}^0)^*(w)\,,\,\star d(I_{-\overline{a}}^1)^*(w)]=P_a w,
    $$
    which finishes the proof of Claim (1). \\

    \noindent{\bf Proof of Claim (2).} Suppose that $u=P_{a}w$ for some $w\in \mathcal S_a^\infty(\p_+ SM,\C)$ such that $w^\sharp_{-1}+w^\sharp_1=dp$ for some $p\in C^\infty(M,\C)$. According to \eqref{adjoint of I^1}, $w^\sharp_{-1}+w^\sharp_1=dp$ is equivalent to saying $(I^1_{-a})^*(w)=dq$ for some $q\in C^\infty(M,\C)$. Then the factorization \eqref{final equation} shows that $u$ belongs to the range of $I^1_{a}$ acting on solenoidal one-forms.
    \medskip
    Conversely, suppose $u=I_{a}^1\star d \varphi$ for some $\varphi\in C^\infty(M,\C)$. Since the Hodge star operator $\star$ is isomorphism between $\Omega^2(M,\C)$ and $C^\infty(M,\C)$, there is a two-form $\omega$ such that $\star\omega= a\varphi$. Since $M$ is simply connected and $\omega$ is closed, there is a smooth one-form $\beta$ such that $\omega=d\beta$. As in the proof of Claim~1, $\beta$ can be taken to be solenoidal, i.e. $\beta=\star dh$. Write $\alpha=\star \beta=-dh$, then one can check that $\delta_{-\overline a}[\alpha,\varphi]=0$. Then by Theorem~\ref{main surjectivity result} there is $w\in\mathcal S_{a}^\infty(\p_+ SM,\C)$ such that $\mathcal I_{-\overline{a}}^*(w)=[\alpha,\varphi]$. Since $d\alpha=0$, using \eqref{final equation}, we can conclude that
    $$
    u=I^1_a\star d\varphi=\mathcal I_a[\star d(I_{-\overline{a}}^0)^*(w)\,,\,\star d(I_{-\overline{a}}^1)^*(w)]=P_a w.
    $$
    According to \eqref{adjoint of I^1}, since $\alpha=-dh$, we have
    $$
    w^\sharp_{-1}+w^\sharp_1=\frac{1}{\pi}(I^1_{-\overline{a}})^*(w)=dq,\quad q=-\frac{1}{\pi}h.
    $$
    Hence, the proof of Claim (2) is complete.
\end{proof}

\section{An injective decomposition of the range of $\cI_a$}\label{sec:viewpoints}

While spaces of pairs are more amenable to the microlocal analysis arguments from the previous sections, inverting $\cI_a$ over pairs requires finding a representative modulo the kernel of $\cI_a$ of $a$-potential pairs. One way to achieve this below is to use a different domain of definition, over which the transform is {\em injective}. We first define the mapping
\begin{align*}
    \dot{C}^\infty(M)\times C^\infty(M) \ni (h,f) \mapsto \cI_a [\star dh, f] \in C^\infty(\partial_+ SM),
\end{align*}
where we have defined the space 
\begin{align}
    \dot{C}^\infty (M) := \left\{ h\in C^\infty(M): \int_{\partial M} h(s)\ ds = 0 \right\},
    \label{eq:dotC}
\end{align}
(note that any other normalization condition setting constants to zero may work) and we establish the following: 

\begin{Theorem}\label{thm:secondrep}
    Let $(M,g)$ a simple surface with boundary and $a\in C^\infty(M)$. Then:
    \begin{itemize}
	\item[$(i)$] The transform $\dot{C}^\infty(M)\times C^\infty(M)\ni (h,f)\mapsto \cI_a[\star dh, f]$ is injective.
	\item[$(ii)$] For any smooth pair $[\alpha,b]\in \cC^\infty(M,\C)$, there exists a unique couple $(h,f)\in \dot{C}^\infty(M)\times C^\infty(M)$ such that $\cI_a [\alpha,b] = \cI_a[\star dh, f]$.
    \end{itemize}
\end{Theorem}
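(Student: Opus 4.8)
The plan is to exploit the $\cL^2$-orthogonal decomposition of pairs from Proposition~\ref{decomposition of pairs} together with the structure of $\ker\cI_a$. First I would settle what $\ker\cI_a$ is: a pair $[\alpha,b]\in\cC^\infty(M,\C)$ satisfies $\cI_a[\alpha,b]=0$ if and only if $[\alpha,b]=d_a p$ for some $p\in C^\infty(M,\C)$ with $p|_{\partial M}=0$, i.e. $[\alpha,b]=[dp,ap]$. Half of this is immediate from the identity $I_a((X+a)\psi)=B_a\psi|_{\partial SM}$ recorded in Section~\ref{2.1} (take $\psi=p$, a function on $M$, so that $(X+a)\psi = dp(v)+ap$ corresponds to the pair $d_a p$, and $B_ap|_{\partial SM}$ restricted to $\partial_+SM$ vanishes when $p|_{\partial M}=0$); the converse, i.e. that $\ker\cI_a$ contains nothing else, is exactly the injectivity of $\cI_{a}$ modulo potential pairs, which follows from \cite[Theorem 1.2]{SaU} applied after the extension argument, or can be cited in the same form used in the proof of Lemma~\ref{normal operator is surjective}. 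So $\ker \cI_a = \cC^\infty_{a,\mathrm{pot}}(M,\C)$.

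For part $(ii)$, given $[\alpha,b]\in\cC^\infty(M,\C)$, use Proposition~\ref{decomposition of pairs} with $k=\infty$ to write $[\alpha,b]=[\beta,h']+d_a p$ with $[\beta,h']$ smooth $a$-solenoidal and $p\in C^\infty(M,\C)$, $p|_{\partial M}=0$; since $d_a p\in\ker\cI_a$ we get $\cI_a[\alpha,b]=\cI_a[\beta,h']$. Now I must replace $[\beta,h']$ by a pair of the form $[\star dh,f]$ with $h\in\dot C^\infty(M)$. The point is that a one-form $\beta$ on a simply connected surface decomposes as $\beta=\star dh + dq$ (Hodge-type decomposition: write $\beta = \star dh + dq$ where $h,q$ are obtained by solving Laplace equations; on a simply connected surface with boundary this is standard), and $dq$ contributes a $d_a$-potential part: indeed $[\star dh + dq, h'] = [\star dh, h' - aq] + d_a q$. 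Absorbing $d_a q$ into the kernel again (here $q$ need not vanish on $\partial M$, but $d_a q$ still lies in $\ker\cI_a$ provided we allow the representative $p$ above to be adjusted — more cleanly, use that $I_a((X+a)q)=B_aq|_{\partial SM}$ and that $\cI_a$ composed with the pair $d_a q$ equals $B_a$ applied to the lift of $q$, so one reabsorbs it by modifying $f$; the cleanest route is to first reduce modulo all of $\{d_a q: q\in C^\infty(M,\C)\}$ rather than only $q|_{\partial M}=0$, then recover uniqueness at the end). This yields $\cI_a[\alpha,b]=\cI_a[\star dh,f]$ for some $h\in C^\infty(M)$, $f\in C^\infty(M)$, and subtracting from $h$ the constant $\frac{1}{|\partial M|}\int_{\partial M}h\,ds$ changes $\star dh$ by $\star d(\mathrm{const})=0$, so we may take $h\in\dot C^\infty(M)$. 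This proves existence in $(ii)$.

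For part $(i)$ (which then gives uniqueness in $(ii)$): suppose $(h,f)\in\dot C^\infty(M)\times C^\infty(M)$ with $\cI_a[\star dh,f]=0$, i.e. $[\star dh,f]\in\ker\cI_a=\cC^\infty_{a,\mathrm{pot}}(M,\C)$, so $[\star dh,f]=[dp,ap]$ for some $p\in C^\infty(M,\C)$ with $p|_{\partial M}=0$. Comparing one-form components, $\star dh = dp$; applying $d$ gives $d\star dh = d\,dp = 0$, hence $h$ is harmonic, and $\star dh = dp$ with $p|_{\partial M}=0$ forces (pairing with $dp$ and integrating by parts, or using that a harmonic function whose conjugate $p$ has constant boundary value must itself have constant boundary normal derivative structure) $dp = 0$, hence $h$ is constant, hence $h=0$ by the normalization $h\in\dot C^\infty(M)$; then $dp=0$ with $p|_{\partial M}=0$ gives $p=0$, and the function component gives $f=ap=0$. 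The main obstacle is organizing the bookkeeping of which potential parts get absorbed where so that uniqueness is clean — in particular handling the interplay between the two normalizations ($p|_{\partial M}=0$ in the kernel description versus $\int_{\partial M}h\,ds=0$ in the domain) without double-counting; isolating $\ker\cI_a$ precisely at the outset is what makes the rest routine.
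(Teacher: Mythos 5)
Your argument is correct in substance, but it takes a genuinely different route from the paper for part $(ii)$, and it has one soft spot worth fixing. The paper proves $(ii)$ by a direct gauge transformation on the transport equation: writing $\alpha = df' + \star dh$ with $f'|_{\partial M}=0$ and $h\in\dot C^\infty(M)$, it replaces the transport solution $u$ by $u+f'$, which has the same traces on $\partial_\pm SM$, giving $\cI_a[\alpha,b]=\cI_a[\star dh, b-af']$ immediately --- no appeal to Proposition~\ref{decomposition of pairs}, to the characterization of $\ker\cI_a$, or to any injectivity result is needed for existence. Your route (reduce modulo $a$-potential pairs via Proposition~\ref{decomposition of pairs}, then decompose the one-form component) also works, but only the easy inclusion $\cC^\infty_{a,\mathrm{pot}}(M,\C)\subset\ker\cI_a$ is actually needed, and the detour through the full kernel characterization is unnecessary overhead. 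The soft spot is your handling of $d_a q$ when $q|_{\partial M}\neq 0$: the claim that $d_a q$ ``still lies in $\ker\cI_a$'' is false in general, since $\cI_a(d_a q)=B_a q|_{\partial_+SM}$ does not vanish unless $q|_{\partial M}=0$, so ``reducing modulo all of $\{d_a q: q\in C^\infty(M,\C)\}$'' would change the data. The fix is simply to take the solenoidal--potential decomposition $\beta=\star dh+dq$ with $q|_{\partial M}=0$ from the start (solve the Dirichlet problem $\delta dq=\delta\beta$, $q|_{\partial M}=0$; then $\beta-dq$ is co-closed, hence of the form $\star dh$ on a simply connected surface); with that choice your absorption step is legitimate and the hedging becomes unnecessary. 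For part $(i)$ your argument matches the paper's up to a cosmetic difference: the paper applies $\delta$ to $\star dh=dm$ to get $m$ harmonic and concludes $m=0$ from $m|_{\partial M}=0$, whereas you apply $d$ to get $h$ harmonic and then need the integration by parts $\|dp\|_{L^2}^2=\int_M\langle\star dh,\overline{dp}\rangle_g=0$ (valid since $\delta\star d=0$ and $p|_{\partial M}=0$) to conclude $dp=0$; both are fine.
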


\begin{proof}[Proof of Theorem \ref{thm:secondrep}] {\bf Proof of $(i)$}. Suppose that $(h,f)$ are such that $\cI_a [\star dh, f] = 0$. By solenoidal injectivity of $\cI_a$, this implies that $[\star dh, f] = d_a m = [dm, am]$ for some function $m$ vanishing on $\partial M$. Then the equality $\star dh = dm$ implies that $m$ and $h$ are harmonic. Since $m|_{\partial M} = 0$, then $m=0$ on $M$. In turn, $f = am = 0$ and since $\star dh = 0$ $h$ is constant equal to zero, due to the normalization condition \eqref{eq:dotC}. \medskip

    \noindent{\bf Proof of $(ii)$}. Let $[\alpha,b]$ a smooth pair. Then $\cI_a [\alpha,b] = u|_{\partial_+ SM}$, where $u$ is the solution to 
    \begin{align*}
	Xu + au = - b - \alpha(v) \quad (SM), \qquad u|_{\partial_- SM}. 
    \end{align*}
    Now, $\alpha$ has a unique Hodge decomposition $\alpha = df' + \star d h$ with $f'\in C^\infty(M)$ with $f'|_{\partial M} = 0$ and $h\in \dot{C}^\infty(M)$. As functions on $SM$, this means, $\alpha(v) = Xf' + X_\perp h$, and thus the previous transport equation can be rewritten as 
    \begin{align*}
	X (u+f') + a (u+f') = - (b-af') - X_\perp h,
    \end{align*}
    where the functions $u+f'$ and $u$ agree on $\partial_{\pm} SM$. In particular, $(u+f')|_{\partial_+ SM} = 0$ and 
    \[ \cI_a [\alpha,b] = u|_{\partial_+ SM} = (u+f')|_{\partial_+ SM} = \cI_a[\star dh,b-af']. \]
    Therefore, the couple $(h,b-af')$ provides the desired candidate, whose smoothness comes from elliptic regularity and smoothness of $a$. In addition, such a couple is unique by virtue of $(i)$. Theorem \ref{thm:secondrep} is proved. 
\end{proof}

We now decompose $h$ further. Recall that we define $\ker^k \eta_\pm := \Omega_k\cap \ker\eta_\pm$.

\begin{Lemma}\label{lem:decomp} Any $h\in \dot{C}^\infty(M)$ decomposes into $h = h_0 + h_+ + h_-$, where $h_0\in C^\infty_0(M)$ is unique and $h_\pm \in \ker^0 \eta_\pm$ are unique up to a constant. In particular, $h =0$ if and only if $h_0 = 0$ and $h_+$ and $h_-$ are constant.
\end{Lemma}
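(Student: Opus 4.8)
The plan is to produce the decomposition $h = h_0 + h_+ + h_-$ by first extracting the fiberwise-harmonic (holomorphic + antiholomorphic) part of $h$ and leaving a remainder that vanishes on $\partial M$. The natural way to do this on a simple surface is to solve a boundary value problem: given $h \in \dot C^\infty(M)$, look for $p \in C^\infty(M)$ solving $\Delta_g p = \Delta_g h$ in $M$ with $p|_{\partial M} = h|_{\partial M}$ (equivalently, let $h_0 := h - p$ solve $\Delta_g h_0 = 0$? no — reverse it): set $h_0$ to be the solution of the Dirichlet problem $\Delta_g h_0 = \Delta_g h$ in $M$, $h_0|_{\partial M} = 0$, which exists and is unique and smooth by elliptic regularity. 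Then $r := h - h_0$ is harmonic on $M$. The point is that a harmonic function, viewed as a function on $SM$ pulled back from $M$, decomposes as a sum of a holomorphic and an antiholomorphic fiberwise-degree-zero component: writing $r = r_+ + r_-$ where $\eta_- r_+ = 0$ and $\eta_+ r_- = 0$, this is exactly the statement that $\eta_+ \eta_- r = 0$, i.e. (up to the standard identity $4\eta_+\eta_- = \Delta$ acting on $\Omega_0$, see \cite{GK,PSU3}) $r$ harmonic. So I would invoke that identity to get $r_\pm \in \ker^0\eta_\pm$ with $r = r_+ + r_-$, and set $h_\pm := r_\pm$.

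For uniqueness: suppose $h_0 + h_+ + h_- = h_0' + h_+' + h_-'$ with $h_0, h_0' \in C^\infty_0(M)$ and $h_\pm, h_\pm' \in \ker^0\eta_\pm$. Then $(h_0 - h_0') = (h_+' - h_+) + (h_-' - h_-)$; the left side vanishes on $\partial M$, and applying $\Delta_g$ (using $4\eta_\mp\eta_\pm = \Delta_g$ on functions and $\eta_-\eta_+ (h_+'-h_+)=\eta_-( 0) = 0$, similarly for the antiholomorphic piece) shows the left side is harmonic, hence $\equiv 0$ since it vanishes on $\partial M$. Therefore $h_0 = h_0'$, and $(h_+ - h_+') = (h_-' - h_-)$ is simultaneously holomorphic and antiholomorphic of degree zero, hence in $\ker^0\eta_+ \cap \ker^0\eta_- $; such a function $c$ satisfies $\eta_+ c = \eta_- c = 0$, so $Xc = (\eta_+ + \eta_-)c = 0$, i.e. $c$ is constant on $SM$, hence a constant on $M$. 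This gives uniqueness of $h_0$ on the nose and of $h_\pm$ up to a constant, and the final sentence ($h = 0 \iff h_0 = 0$, $h_\pm$ constant) is the special case $h = h' = 0$ combined with the observation that $h_0 + h_+ + h_-$ with $h_0 \in C^\infty_0$ and $h_\pm$ constant forces $h_0$ harmonic with zero boundary value, so $h_0 = 0$.

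The main obstacle — or rather the one delicate point — is justifying that a harmonic function on $M$, as a (degree-zero) function on $SM$, really does split into $\ker^0\eta_+ \oplus \ker^0\eta_-$ modulo constants, i.e. that the equation $\eta_+ r = \psi_1 \in \Omega_1$ can be solved for $r_-$ with $\eta_+ r_- = 0$ appropriately, or more precisely that the decomposition $r = r_+ + r_-$ exists with each summand globally smooth on $M$. On a simple (hence simply connected) surface this is standard: one takes a harmonic conjugate $\tilde r$ of $r$ (which exists globally by simple connectedness), and then $r_\pm := \tfrac12(r \mp i \tilde r)$ (up to normalization of the additive constant) are the holomorphic/antiholomorphic pieces, with $\eta_\mp r_\pm = 0$ following from the Cauchy–Riemann equations translated into the $\eta_\pm$ formalism. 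I would cite \cite{GK} and \cite{PSU3} for this correspondence rather than reprove it. Everything else (existence/uniqueness/smoothness of the Dirichlet solution $h_0$, the identity $4\eta_\mp\eta_\pm = \Delta_g$ on $\Omega_0$) is routine and can be quoted.
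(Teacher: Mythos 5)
Your proposal is correct and follows essentially the same route as the paper: the paper defines $u$ as the harmonic function with $u|_{\partial M}=h|_{\partial M}$ (so $h_0:=h-u$ is exactly your Dirichlet solution) and then splits $u$ via its global harmonic conjugate $v$ into $\frac12(u+iv)+\frac12(u-iv)\in\ker^0\eta_+\oplus\ker^0\eta_-$, which is your harmonic-conjugate step. Your explicit uniqueness argument (killing the difference of the $h_0$'s by harmonicity plus zero boundary values, then noting $\ker\eta_+\cap\ker\eta_-\cap\Omega_0$ consists of constants) is a welcome addition that the paper leaves implicit.
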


\begin{proof} Let $h\in \dot{C}^\infty (M)$ and define $u$ unique harmonic function with $u|_{\partial M} = h|_{\partial M}$. By elliptic regularity and smoothness of $\partial M$, $u\in \dot{C}^\infty(M)$. Let $v$ the unique harmonic conjugate to $u$ satisfying the normalization condition \eqref{eq:dotC}, such that $du = \star d v$. In the sense of functions on $SM$, this is equivalent to saying $Xu = X_\perp v$ which upon using that $X = \eta_+ + \eta_-$ and $X_\perp = \frac{1}{i} (\eta_+ - \eta_-)$, yields
    \[ \eta_+ (u+iv) + \eta_- (u-iv) = 0. \] 
    Projecting onto $\Omega_1$ and $\Omega_{-1}$ gives $\eta_+ (u+iv) = 0$ and $\eta_- (u-iv)$. Therefore, the decomposition follows upon writing
    \[ h = (h-u) + \frac{1}{2} (u+iv) + \frac{1}{2} (u-iv). \] 
    Lemma \ref{lem:decomp} is proved. 
\end{proof}

Upon decomposing $h = h_0 + h_+ + h_-$ as in Lemma \ref{lem:decomp}, and using that $h_\pm \in \ker^0 \eta_\pm$, the data $\cD := \cI_a [\star d h,f]$ looks like 
\begin{align*}
    \cD &= I_a (f + X_\perp h_0 - i \eta_+ h_- + i \eta_- h_+) \\
    &= I_{a}^0 f + I_{a}^\perp h_0 + I_{a}^{+1} (-i\eta_+ h_-) + I_{a}^{-1} (i \eta_- h_+),
\end{align*}
where the equality does not depend on constants added to $h_+$ or $h_-$. From the commutator relation $[\eta_+,\eta_-]= \frac{i}{2} \kappa V$, we can see that $\eta_+ h_- \in \ker^1 \eta_-$ and $\eta_- h_+ \in \ker^{-1} \eta_+$. Upon defining $\omega_1 = -i\eta_+ h_-$ and $\omega_{-1} = i\eta_- h_+$, and in light of Lemma \ref{lem:decomp}, the decomposition $\star d h = \star d h_0 + \omega_1 + \omega_{-1}$ is unique and the left hand side is zero if and only if each summand of the right hand side is zero. Combining this with  Theorem \ref{thm:secondrep}, we arrive at the following conclusion: 

\begin{Lemma}\label{lem:4decomp} For any $\cD\in \text{Range } \cI_a$, there exists a unique quadruple $(f,h_0,\omega_1,\omega_{-1})\in C^\infty(M)\times C^\infty_0(M)\times \ker^1 \eta_- \times \ker^{-1} \eta_+$ such that 
  \begin{align*}
    \cD = I_a^0 f + I_a^\perp h_0 + I_a^{+1} \omega_1 + I_a^{-1} \omega_{-1}. 
  \end{align*} 
  In particular, $\cD=0$ if and only if the entire quadruple vanishes identically. 
\end{Lemma}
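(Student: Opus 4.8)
The plan is to combine the two injective parametrizations of $\text{Range }\cI_a$ already at our disposal: Theorem~\ref{thm:secondrep}, which rewrites $\cD$ via a unique couple $(h,f)\in\dot{C}^\infty(M)\times C^\infty(M)$ with $\cD=\cI_a[\star dh,f]$, and Lemma~\ref{lem:decomp} together with the displayed computation preceding this statement, which splits $\star dh$ into its three pieces. So the proof is largely bookkeeping.

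For \emph{existence}: given $\cD\in\text{Range }\cI_a$, I would write $\cD=\cI_a[\alpha,b]$ for some $[\alpha,b]\in\cC^\infty(M,\C)$ and invoke Theorem~\ref{thm:secondrep}(ii) to get $(h,f)$ with $\cD=\cI_a[\star dh,f]$, then decompose $h=h_0+h_++h_-$ as in Lemma~\ref{lem:decomp}, with $h_0\in C^\infty_0(M)$ and $h_\pm\in\ker^0\eta_\pm$. Viewing $\star dh$ as the function $X_\perp h$ on $SM$ and using $X_\perp=\frac1i(\eta_+-\eta_-)$ together with $\eta_\pm h_\pm=0$, this gives $X_\perp h=X_\perp h_0+\omega_1+\omega_{-1}$ with $\omega_1:=-i\eta_+h_-$ and $\omega_{-1}:=i\eta_-h_+$, and $\omega_1\in\ker^1\eta_-$, $\omega_{-1}\in\ker^{-1}\eta_+$ by the commutator $[\eta_+,\eta_-]=\frac{i}{2}\kappa V$. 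Applying $\cI_a$ and recalling the definitions $I_a^\perp=I_a^1\circ\star d$ and $I_a^{\pm1}=I_a^1|_{\ker^{\pm1}\eta_\mp}$ produces the asserted identity.

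For \emph{uniqueness} and the vanishing statement, it suffices to show $(f,h_0,\omega_1,\omega_{-1})\mapsto\cD$ is injective. I would note it factors through $(h,f)\mapsto\cI_a[\star dh,f]$, which is injective by Theorem~\ref{thm:secondrep}(i), so it is enough that $h\mapsto(h_0,\omega_1,\omega_{-1})$ be injective on $\dot{C}^\infty(M)$. This map is well defined since $\eta_\pm$ annihilate constants, so adding constants to $h_\pm$ does not change $\omega_{\pm1}$; and if $h,h'$ yield the same triple, then $h_0=h_0'$ by the uniqueness clause of Lemma~\ref{lem:decomp}, while $\eta_+(h_--h_-')=0$ and $\eta_-(h_+-h_+')=0$ force $h_\pm-h_\pm'\in\ker X\cap\Omega_0$, i.e.\ constants, after which the normalization $\int_{\partial M}(\,\cdot\,)\,ds=0$ forces $h=h'$. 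Hence the quadruple is unique, and in particular $\cD=0$ iff it vanishes identically.

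The main obstacle: honestly there is no analytic difficulty left --- solenoidal injectivity, the Hodge decomposition, and the harmonic-conjugate splitting are all packaged in the cited results. The one point needing genuine care is the constant/normalization bookkeeping in passing between $h$ and $(h_0,h_+,h_-)$: the additive constants in the fiberwise-harmonic summands $h_\pm$ are legitimate free parameters, invisible in $\omega_{\pm1}$, so I must verify they are still pinned down by $\int_{\partial M}h\,ds=0$ in order not to lose injectivity of $h\mapsto(h_0,\omega_1,\omega_{-1})$.
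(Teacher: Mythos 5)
Your proposal is correct and follows essentially the same route as the paper: represent $\cD=\cI_a[\star dh,f]$ via Theorem \ref{thm:secondrep}, split $h=h_0+h_++h_-$ via Lemma \ref{lem:decomp}, set $\omega_1=-i\eta_+h_-$, $\omega_{-1}=i\eta_-h_+$, and deduce uniqueness from the injectivity of $(h,f)\mapsto\cI_a[\star dh,f]$ together with the uniqueness of the splitting of $\star dh$. Your explicit check that the additive constants in $h_\pm$ are absorbed by the normalization \eqref{eq:dotC} is, if anything, slightly more careful than the paper's own write-up.
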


\section{Inversion approach} \label{sec:inversion}

As Lemma \ref{lem:4decomp} suggests, since the mapping $(f,h_0,\omega_1,\omega_{-1})\mapsto \cD$ is injective, we expect to write reconstruction formulas for each element of the quadruple, which is the purpose of this section. The remainder is organized as follows: 
\begin{itemize}
  \item In Section \ref{sec:recons1}, we will first show how to reconstruct $\omega_1$ and $\omega_{-1}$ from $\cD$, thereby allowing us to remove the data $I_{a}^{+1} \omega_1 + I_{a}^{-1} \omega_{-1}$ from $\cD$. 
  \item In Section \ref{sec:holomorphization}, as a preparation toward the reconstruction of $(h_0,f)$, we will construct a so-called boundary holomorphization operator, related to the unattenuated transforn $\cI_0$. 
  \item In Section \ref{sec:recons2}, we will then show how to reconstruct $(h_0,f)$ from the remaining data $\cI_a [\star d h_0,f]$ via explicit formulas. 
\end{itemize}

\subsection{Reconstruction of $\omega_1$ and $\omega_{-1}$.} \label{sec:recons1}

Here and below, by $\cO_{\ge k}$ (resp. $\cO_{\le k}$), we denote an element $u\in C^\infty(SM)$ such that $u_p = 0$ for all $p<k$ (resp. all $p>k$). We first recall the following result from \cite{PSU3}, see also \cite{PaternainZhou2015}. 

\begin{Lemma}[Lemma 5.6 in \cite{PSU3}]\label{lem:PSU} Given any $f\in \Omega_m$, there exists $w\in C^\infty(SM,\C)$ such that $Xw = 0$ and $w_m=f$.    
\end{Lemma}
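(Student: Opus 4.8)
The statement to prove is Lemma \ref{lem:PSU}: given $f \in \Omega_m$, find $w \in C^\infty(SM,\C)$ with $Xw = 0$ and $w_m = f$. The plan is to realize $w$ as the transport solution $U_0 h_\psi$ attached to a well-chosen boundary function, or rather, to exploit the surjectivity machinery already set up for the adjoints. Since $(M,g)$ is simple, the fiberwise Hilbert transform and the commutator formula \eqref{[H,X]} are available, and the key observation is that $w_m$ is controlled by the adjoint of the $m$-th component of the unattenuated ray transform. Concretely, I would first note that $Xw = 0$ with $w = h^\sharp$ for $h \in \cS_0^\infty(\p_+ SM)$ (taking $a = 0$), so that $w_m = (I_{m,0}^* h)$ in the notation of Section \ref{sec:adjoints}; producing the desired $w$ amounts to showing that $I_{m,0}^*$ hits $f \in \Omega_m$, i.e. a surjectivity statement for a single Fourier component.

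The main step is therefore a surjectivity argument entirely parallel to the proof of Theorem \ref{main surjectivity result}, but simpler because it concerns a single harmonic degree rather than pairs. I would: (1) embed $(M,g)$ isometrically in a slightly larger simple surface $(\widetilde M, g)$; (2) form the normal-type operator $\widetilde{\mathcal N} := \widetilde{I}_0^* \widetilde{I}_0$ restricted appropriately, which by the arguments of Lemma \ref{normal operator is elliptic PDO} (with $a = 0$) is an elliptic $\Psi$DO of order $-1$ on solenoidal tensors / the relevant component; (3) build a parametrix, use the smoothing remainder together with an extension operator (the analogue of Proposition \ref{extension of a-solenoidal pairs with compact support}, or the extension lemma from \cite{PaternainZhou2015}) to get closed range, and finally (4) rule out a nontrivial cokernel via s-injectivity of $\widetilde I_0$ on $\widetilde M$ (\cite[Theorem 1.2]{SaU} with $a = 0$, which is classical for simple surfaces). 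Peeling off a single Fourier mode $\Omega_m$ from the solenoidal reconstruction is done exactly as in the passage from $\cI_a^*$ to $I_{m,a}^*$ via the inclusion $\imath_m : H_m \to L^2(SM)$. An alternative, and in fact the route taken in \cite{PSU3}, is more hands-on: use the fact that $X : \Omega_{m-1} \oplus \Omega_{m+1} \to \Omega_m$ together with the ellipticity of $\eta_\pm$ on a simple surface to solve $\eta_- w_{m+1} + \eta_+ w_{m-1} = 0$ with prescribed lower-degree data, then propagate; but the cleanest exposition here simply cites \cite[Lemma 5.6]{PSU3} and \cite{PaternainZhou2015}.

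The hard part is the surjectivity / closed-range argument, specifically ensuring that the cokernel of the restricted normal operator is trivial: this is where one genuinely needs simplicity (for the $\Psi$DO/parametrix structure and for the absence of conjugate points) and the injectivity result on the enlarged surface. Everything else — the reduction to a single Fourier component, the extension of solenoidal data, the elliptic regularity bootstrapping — is routine given the tools already developed in Sections \ref{sec:pairs}--\ref{sec:surjectivity} of this paper. Since the result is quoted verbatim as \cite[Lemma 5.6]{PSU3}, in the write-up I would simply invoke it and refer the reader there and to \cite{PaternainZhou2015}, rather than reproducing the argument.
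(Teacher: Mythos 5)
The paper offers no proof of this statement: it is imported verbatim from \cite[Lemma 5.6]{PSU3} (with a pointer to \cite{PaternainZhou2015}), and your proposal ultimately does the same, deferring to that citation, so the two treatments agree. Your supplementary sketch---reducing the lemma to surjectivity of $I_{m,0}^*$ and invoking the normal-operator/parametrix/s-injectivity machinery on a slightly larger simple surface---is the right circle of ideas, though note that for $|m|\ge 2$ the single mode $\Omega_m$ is not the solenoidal component on which the normal operator is elliptic, so the step of ``peeling off a single Fourier mode'' is less routine than the passage from $\cI_a^*$ to $I_{m,a}^*$ suggests.
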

Using Lemma \ref{lem:PSU}, we prove the following: 

\begin{Lemma}\label{lem:goodFirstIntegrals} Let $(M,g)$ simple and $a\in C^\infty(M,\C)$. Then the following statements hold true: 
    \begin{enumerate}
	\item For any $\phi \in \ker^1 \eta_-$, there exists $w = \phi + \cO_{\ge 2}$, solution of $Xw - \abar w = 0$.
	\item For any $\phi \in \ker^{-1} \eta_+$, there exists $w = \phi + \cO_{\le-2}$, solution of $Xw - \abar w = 0$.
    \end{enumerate}    
\end{Lemma}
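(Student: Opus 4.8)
The plan is to prove the two statements in parallel, since they are mirror images of each other under the symmetry $\eta_\pm \leftrightarrow \eta_\mp$, $k\leftrightarrow -k$; I will write out the argument for statement $(1)$ and indicate that $(2)$ follows by the same reasoning. The idea is to build the desired first integral $w$ by successive approximation, adding one Fourier mode at a time, and to use Lemma \ref{lem:PSU} to turn an approximate solution into an exact one.

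First I would set up the recursion. Start with $w^{(1)} := \phi \in \ker^1 \eta_-$, i.e.\ $\phi \in \Omega_1$ with $\eta_- \phi = 0$. The operator we want to invert is $(X - \abar)$; writing $X = \eta_+ + \eta_-$ and decomposing into Fourier modes, the equation $(X-\abar)w = 0$ reads, degree by degree, $\eta_+ w_{k-1} + \eta_- w_{k+1} = \abar w_k$ for all $k$. Because $\phi \in \Omega_1$ and $\eta_-\phi = 0$, the pair $(w_0, w_1) = (0,\phi)$ already satisfies the equations in degrees $k \le 0$ (all terms vanish) and the degree-$1$ equation needs $\eta_- w_2 = \abar \phi - \eta_+ w_0 = \abar\phi$. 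The key point here is that $\eta_-:\Omega_2 \to \Omega_1$ is surjective on a simple surface (it is elliptic with trivial cokernel by the Guillemin--Kazhdan theory, cf.\ the references in Section \ref{3.2}), so we can solve for $w_2 \in \Omega_2$. Continuing inductively, having chosen $w_0 = 0, w_1 = \phi, w_2, \dots, w_m$ so that all equations in degrees $\le m-1$ hold, the degree-$m$ equation determines $\eta_- w_{m+1} = \abar w_m - \eta_+ w_{m-1} \in \Omega_m$, which we solve for $w_{m+1} \in \Omega_{m+1}$ using surjectivity of $\eta_-:\Omega_{m+1}\to\Omega_m$. This produces a formal series $\sum_{k\ge 1} w_k$, but it need not converge or define a smooth function on $SM$, so I cannot stop here.

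To fix convergence, I would truncate: for $N$ large, let $w^N := \sum_{k=1}^{N} w_k$, which solves $(X-\abar) w^N = r^N$ with $r^N = \eta_+ w_N \in \Omega_{N+1}$, a single-mode error in top degree $N+1$. Now invoke Lemma \ref{lem:PSU}: there exists $h \in C^\infty(SM)$ with $Xh = 0$ and $h_{N+1} = $ (the $\Omega_{N+1}$-function we need to cancel). More precisely, I want to correct the error using a first integral of $X - \abar$ (not of $X$); the standard device, following \cite{PSU3,PaternainZhou2015}, is to first solve $\eta_- w_{N+1} = r^N$ to push the error up one more degree and then observe that by smoothing/ellipticity estimates the tail can be made smooth, or alternatively to use Lemma \ref{lem:PSU} together with the attenuated analogue obtained by multiplying by an integrating factor $e^{\pm w}$ where $Xw = -\abar$ (as in Section \ref{sec:prelim}): if $g$ is a first integral of $X$ with prescribed bottom mode, then $e^{-w} g$ is annihilated by $X-\abar$ after suitable adjustment, and matching Fourier modes lets us cancel the residual. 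The upshot is a genuine smooth $w$ with $(X-\abar)w = 0$, $w_0 = w_{-1} = \dots = 0$, $w_1 = \phi$, i.e.\ $w = \phi + \cO_{\ge 2}$.

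The main obstacle I anticipate is precisely the passage from the formal/truncated solution to an honest smooth solution on $SM$: one must ensure the correction absorbing the residual error $r^N$ lands in $\cO_{\ge 2}$ (does not contaminate the modes $0$, $-1$, $1$ we have pinned down) and is globally smooth, which is where the simplicity of $(M,g)$ and the surjectivity/regularity properties of $\eta_\pm$ — together with the integrating-factor trick converting between $X$ and $X-\abar$ — are essential. Statement $(2)$ is obtained verbatim by exchanging the roles of $\eta_+$ and $\eta_-$ and replacing ``$\cO_{\ge 2}$'' with ``$\cO_{\le -2}$'', starting the recursion from $\phi \in \ker^{-1}\eta_+ \subset \Omega_{-1}$ and descending in degree.
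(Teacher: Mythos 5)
Your mode-by-mode equations are set up correctly, and you have correctly identified where the difficulty lies; but the proof is not complete at precisely that point, and the fixes you sketch do not close it. The formal recursion $\eta_- w_{k+1} = \abar w_k - \eta_+ w_{k-1}$ produces an infinite series $\sum_{k\ge 1} w_k$ with no control whatsoever on convergence, and the first remedy you offer --- solving $\eta_- w_{N+1} = r^N$ to ``push the error up one more degree'' and then appealing to ``smoothing/ellipticity estimates'' --- is not an argument: pushing the error up a degree is exactly one more step of the same recursion, and iterating it returns you to the divergent formal series. There is no smoothing mechanism here; $\eta_-^{-1}$ does not gain decay in the fiber-Fourier index. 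So the truncation-plus-correction route, as described, fails.

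The second device you gesture at is in essence the paper's actual proof, but the two inputs that make it work are exactly the ones left unstated. The paper takes $\wh$ a smooth, odd, \emph{holomorphic} solution of $X\wh = \abar$ (this is \cite[Proposition 4.1]{SaU}, a nontrivial result whose proof is where the analytic work is hidden); holomorphy gives $e^{\wh} = 1 + \cO_{\ge 1}$ and $(X-\abar)e^{\wh}=0$. Separately, it produces a first integral of $X$ (not of $X-\abar$) of the form $v' = \phi + \cO_{\ge 2}$: take $v$ from Lemma \ref{lem:PSU} with $Xv=0$, $v_1=\phi$, and truncate to $v'=\sum_{k\ge 1}v_k$, the hypothesis $\eta_-\phi=0$ being what allows the truncation to remain a solution. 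Then $w = e^{\wh}v'$ solves $(X-\abar)w = ((X-\abar)e^{\wh})v' + e^{\wh}Xv' = 0$ and equals $(1+\cO_{\ge 1})(\phi + \cO_{\ge 2}) = \phi + \cO_{\ge 2}$ exactly --- there is no ``residual'' left to cancel by matching Fourier modes. Without (a) the holomorphy of the integrating factor, which is what prevents the multiplication from contaminating the modes $\le 1$, and (b) the one-sided first integral of the \emph{unattenuated} operator, your outline does not yield the conclusion. Case (2) is indeed the mirror image, via the antiholomorphic $\wa$ and $e^{\wa} = 1 + \cO_{\le -1}$.
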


\begin{proof}[Proof of Lemma \ref{lem:goodFirstIntegrals}] Let $\wh, \wa$ denote smooth, odd solutions of $X\wh = X\wa = \overline{a}$ with $\wh$ holomorphic and $\wa$ antiholomorphic, whose existence is established in \cite[Proposition 4.1]{SaU}. Then $e^\wh$ is a holomorphic solution of $(X - \overline{a})e^\wh = 0$ of the form $e^{\wh} = 1 + \cO_{\ge 1}$ and $e^{\wa} = 1 + \cO_{\le -1}$ is an antiholomorphic solution of $(X - \overline{a})e^\wh = 0$. \\
    {\bf Proof of (1).} For $\phi\in \ker^1\eta_-$, using Lemma \ref{lem:PSU}, there exists $v$ smooth solution of $Xv = 0$ with $v_1 = \phi$. Since $\eta_- v_1 = \eta_- \phi = 0$, then $v' = \sum_{k\ge 1} v_k$ is another smooth solution of $Xv' = 0$ with $v'_1 = v_1 = \phi$. Then setting $w = e^{\wh} v'$ completes the proof. \\
    {\bf Proof of (2).} For $\phi\in \ker^{-1}\eta_+$, using Lemma \ref{lem:PSU}, there exists $v$ smooth solution of $Xv = 0$ with $v_{-1} = \phi$. Since $\eta_+ v_{-1} = \eta_+ \phi = 0$, then $v' = \sum_{k\le -1} v_k$ is another smooth solution of $Xv' = 0$ with $v'_1 = v_1 = \phi$. Then setting $w = e^{\wa} v'$ completes the proof.
\end{proof}

\paragraph{The spaces $L^2(\ker^k \eta_\pm)$.} In the sequel, we denote 
\begin{align*}
    L^2(\ker^k \eta_\pm) := \{ f\in L^2(SM):\ f_p = 0, \quad p\ne k;\ \eta_\pm f = 0  \}.
\end{align*}
These spaces are closed subpaces of $L^2(SM)$, essentially because, using isothermal coordinates, the operators $\eta_\pm$ are $\partial_z, \partial_\zbar$ operators and that $L^2(M)$-limits of solutions of $\partial_\zbar f = 0$ are in fact normal limits (uniform limits on compact subsets of $M$), and thus themselves solutions of $\partial_\zbar f = 0$ (see for instance \cite[Ex. 6 p254]{Stein}). These spaces are therefore Hilbert spaces themselves, admitting complete orthonormal sets. For the sequel, we denote $\{\phi^{\pm 1, (p)}\}_{p=0}^\infty$ orthonormal Hilbert bases of $L^2(\ker^{\pm 1} \eta_\mp)$. Then for any $\phi^{1,(p)}$, we define $w^{1,(p)}$ as in Lemma \ref{lem:goodFirstIntegrals}.(1) and for any $\phi^{-1,(p)}$, we define $w^{-1,(p)}$ according to Lemma \ref{lem:goodFirstIntegrals}.(2). 

We now explain how to reconstruct elements of $\ker^{\pm 1} \eta_{\mp}$ from knowledge of their ray transforms, and notice how these reconstructions pay no heed to the additional terms $f$ and $h_0$.


\begin{Theorem}\label{thm:holoterms} Let $(M,g)$ a simple surface and $a\in C^\infty(M,\C)$. Let $\cD \in \text{Range }\cI_a$ and $(f, h_0, \omega_1,\omega_{-1})$ as in Lemma \ref{lem:4decomp}, then the harmonic one-forms $\omega_1$ and $\omega_{-1}$ can be reconstructed from $\cD = I_{a}^0 f + I_{a}^\perp h_0 + I_{a}^{+1} \omega_1 + I_{a}^{-1} \omega_{-1}$ via the formulas
    \begin{align}
	\omega_1 &= \sum_{p=0}^\infty \dprod{\cD}{w^{1,(p)}|_{\partial_+ SM}}_{L^2_\mu (\partial_+ SM)}\ \phi^{1,(p)}, \label{eq:etaplushminus}\\
	\omega_{-1} &= \sum_{p=0}^\infty \dprod{\cD}{w^{-1,(p)}|_{\partial_+ SM}}_{L^2_\mu (\partial_+ SM)}\ \phi^{-1,(p)}. \label{eq:etaminushplus}
    \end{align}    
\end{Theorem}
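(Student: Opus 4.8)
The plan is to establish an adjoint/duality identity showing that, for a suitable first integral $w$ of the modified transport operator $X - \abar$, the quantity $\dprod{\cD}{w|_{\partial_+ SM}}_{L^2_\mu(\partial_+ SM)}$ reproduces exactly the $L^2(SM)$-pairing between the integrand of $\cD$ and $w$, so that only the matching fiber-harmonic component survives. Concretely, write $\cD = \cI_a[\star dh,f] = u|_{\partial_+ SM}$, where $(X+a)u = -(f + X_\perp h_0 + \omega_1 + \omega_{-1})$ as a function on $SM$ (abusing notation, identifying the one-forms $\omega_{\pm 1}$ with their fiber-linear representatives), with $u|_{\partial_- SM} = 0$. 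Let $w$ be a smooth solution of $Xw - \abar w = 0$. Then $X(u\overline w) = (Xu)\overline w + u\overline{Xw} = (-au - (\text{source}))\overline w + u\,\overline{\abar}\,\overline w = -(\text{source})\overline w + (\overline{\abar} - a)u\overline w$; since $a$ is \emph{real} in this reconstruction (the paper allows complex $a$, but note that for Theorem~\ref{thm:holoterms} one should treat the complex case by the same bookkeeping as in \eqref{dual of I_a}, where the factor is $\overline{U_a^{-1}}$), the last term vanishes and integrating over $SM$ via Santal\'o's formula as in \eqref{dual of I_a} gives $\dprod{\cD}{w|_{\partial_+ SM}}_{L^2_\mu(\partial_+ SM)} = -\dprod{\text{source}}{w}_{L^2(SM)}$, up to sign and a harmless constant. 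For general complex $a$ one replaces $w$ by $\overline{U_a^{-1}}\,(\text{integral of the first integral})$ or, equivalently, verifies the identity directly from the integral representation of $I_a$ and of $I_a^*$ in \eqref{general formula for the adjoint}; this is the cleanest route and avoids sign pitfalls.

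Next I would compute $\dprod{\text{source}}{w}_{L^2(SM)}$ term by term using the fiber-Fourier orthogonality $H_k \perp H_\ell$ for $k\ne \ell$. For the first integral $w^{1,(p)} = \phi^{1,(p)} + \cO_{\ge 2}$ from Lemma~\ref{lem:goodFirstIntegrals}.(1): the source has fiber-components in degrees $0$ (namely $f$, via $I_a^0$), $1$ and $-1$ (namely $\omega_1 \in \ker^1\eta_-$ and $\omega_{-1}\in\ker^{-1}\eta_+$, together with $X_\perp h_0 = i(\eta_- h_0 - \eta_+ h_0)$ which has components in degrees $1$ and $-1$). Pairing against $w^{1,(p)}$, whose nonzero components sit in degrees $\ge 2$ except for the degree-$1$ part $\phi^{1,(p)}$: wait — one must be careful, $\phi^{1,(p)}\in\ker^1\eta_-$ is degree $1$, and $\overline{w^{1,(p)}}$ then has a degree $-1$ component. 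So the pairing picks up the degree $-1$ part of the source against $\overline{\phi^{1,(p)}}$, plus contributions of the degree $\ge 2$ part of $w^{1,(p)}$ against... nothing, since the source has no components in degree $\le -2$. Hmm, this suggests I have the roles of $w^{1,(p)}$ and $w^{-1,(p)}$ swapped relative to which $\omega$ they reconstruct; the correct bookkeeping is that $w^{1,(p)} = \phi^{1,(p)} + \cO_{\ge 2}$ pairs (via its conjugate, degree $-1$ plus $\cO_{\le -2}$) against the degree $-1$ content of the source. The degree $-1$ content of the source is $i\eta_- h_0 + \omega_{-1}$...

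So the genuinely delicate point, and the one I expect to be the main obstacle, is getting the degree-matching exactly right and checking that the \emph{unwanted} cross terms vanish: specifically, one must show $\dprod{\eta_- h_0}{\phi}_{L^2(SM)} = 0$ for every $\phi\in \ker^{-1}\eta_+$ (and symmetrically with $\eta_+ h_0$ and $\ker^1\eta_-$), which follows by integration by parts on $SM$ — $\dprod{\eta_- h_0}{\phi} = \dprod{h_0}{\eta_+\phi}$ up to boundary terms that vanish because $h_0\in C^\infty_0(M)$ — and that the degree-$(\ge 2)$ tail of $w^{1,(p)}$ contributes nothing because the source is supported in fiber-degrees $\{-1,0,1\}$. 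Once these vanishings are in place, $\dprod{\cD}{w^{1,(p)}|_{\partial_+ SM}}_{L^2_\mu}$ equals (a nonzero universal constant times) $\dprod{\omega_1}{\phi^{1,(p)}}_{L^2(SM)}$ — here I should double check whether it is $\omega_1$ against $\phi^{1,(p)}$ directly or whether an $\eta_\pm$ gets moved, but since $w^{1,(p)}$ is built to be a first integral with prescribed degree-$1$ part $\phi^{1,(p)}$ and $\omega_1$ is itself the degree-$1$ source term, the pairing is direct. Then, expanding $\omega_1 = \sum_p \dprod{\omega_1}{\phi^{1,(p)}}_{L^2(SM)}\phi^{1,(p)}$ in the orthonormal Hilbert basis of $L^2(\ker^1\eta_-)$ (legitimate since this is a closed subspace, as noted before the theorem) yields \eqref{eq:etaplushminus}, and \eqref{eq:etaminushplus} follows identically with $1\leftrightarrow -1$, $\eta_+\leftrightarrow\eta_-$, $\wh\leftrightarrow\wa$. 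The absorption of the normalizing constant from \eqref{adjoint of I^1} (the $\pi$) into the definition of the $\phi^{\pm 1,(p)}$ / the pairing should be tracked but is routine.

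I would organize the write-up as: (a) the duality identity $\dprod{\cD}{w|_{\partial_+ SM}}_{L^2_\mu} = c\,\dprod{\text{source}}{w}_{L^2(SM)}$ for $w$ any smooth solution of $(X-\abar)w=0$, proved via Santal\'o as in \eqref{dual of I_a}; (b) the fiber-degree orthogonality computation isolating the $\omega_{\pm 1}$ term, including the two integration-by-parts lemmas killing the $h_0$ cross terms; (c) the Hilbert-basis expansion. The main obstacle, to restate, is the careful fiber-harmonic bookkeeping in step (b) — ensuring the $f$-term (degree $0$), the $h_0$-terms (degrees $\pm 1$), and the higher tail of the first integral all drop out, leaving only the clean projection onto $\ker^{\pm 1}\eta_\mp$.
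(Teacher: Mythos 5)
Your overall architecture is exactly the paper's: pair the transport equation $(X+a)u = -(f+X_\perp h_0+\omega_1+\omega_{-1})$ against a first integral $w^{1,(p)}$ of $X-\abar$ from Lemma \ref{lem:goodFirstIntegrals}, integrate by parts over $SM$ to convert the left side into $-\dprod{\cD}{w^{1,(p)}|_{\partial_+SM}}_{L^2_\mu}$, kill the $h_0$ cross term by integration by parts using $h_0|_{\partial M}=0$, and expand in the orthonormal basis of the closed subspace $L^2(\ker^1\eta_-)$ via Bessel. However, the central step (b) is carried out incorrectly and you never resolve the contradiction you run into. With the Hermitian inner product $\langle u,v\rangle_{L^2(SM)}=\int_{SM}u\overline{v}\,d\Sigma^3$, the spaces $H_k$ are \emph{mutually orthogonal}: $\langle u_k,w_\ell\rangle=0$ unless $k=\ell$ (in a local trivialization $u_k\overline{w_\ell}\sim e^{i(k-\ell)\theta}$ integrates to zero over the fiber unless $k=\ell$). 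The conjugation is already built into the pairing, and you apply it a second time when you argue that ``$\overline{w^{1,(p)}}$ has a degree $-1$ component, so the pairing picks up the degree $-1$ part of the source.'' That is the error: the pairing against $w^{1,(p)}=\phi^{1,(p)}+\cO_{\ge 2}$ sees only the degree $+1$ part of the source, namely $-\omega_1-(X_\perp h_0)_1 = -\omega_1+i\eta_+h_0$, and the $h_0$ term dies because $(\eta_+h_0,\phi^{1,(p)})=(h_0,\eta_-\phi^{1,(p)})=0$ with no boundary term. There is no swap of the roles of $w^{1,(p)}$ and $w^{-1,(p)}$; your final assertion that the pairing equals $\dprod{\omega_1}{\phi^{1,(p)}}$ is the correct one, but as written it contradicts your own intermediate reasoning rather than following from it.

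Two smaller points. First, your worry about complex $a$ is unfounded and the caveat about needing $a$ real should be dropped: since $\overline{\abar}=a$, one has $X(u\overline{w})=(Xu)\overline{w}+u\,\overline{Xw}=(-au-s)\overline{w}+u\,a\overline{w}=-s\overline{w}$ exactly, for any $a\in C^\infty(M,\C)$ -- this is precisely why the adjoint transport operator is taken to be $X-\abar$. Second, there is no normalizing constant ($\pi$ or $2\pi$) to track: the identity obtained is simply $\dprod{\cD}{w^{1,(p)}|_{\partial_+SM}}_{L^2_\mu}=(\omega_1,\phi^{1,(p)})_{L^2(SM)}$, and the expansion in the Hilbert basis then gives \eqref{eq:etaplushminus} directly.
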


\begin{proof} We only prove \eqref{eq:etaplushminus}, as the proof of \eqref{eq:etaminushplus} is similar. \\
    \noindent{\bf Proof of \eqref{eq:etaplushminus}.} Recall that $\cD = u|_{\partial_+ SM}$, where $u$ solves the problem
    \begin{align*}
      Xu + au = -f - X_\perp h_0 - \omega_1 - \omega_{-1} \qquad (SM).
    \end{align*}    
    For any $p\ge 0$, setting $\phi = \phi^{1,(p)}$ and $w = w^{1,(p)}$, we take the $L^2(SM)$ inner product of the transport equation above with $w$ to make appear:
    \begin{align*}
	LHS &= (Xu + au, w)_{SM} = -\dprod{\cD}{w|_{\partial_+ SM}}_{L^2_\mu (\partial_+ SM)} + \cancel{(u, -Xw + \abar w)_{SM}} \\
	RHS &= (-f-X_\perp h_0 - \omega_1 - \omega_{-1}, w)_{SM} = (- \eta_+ h_0 - \omega_1, \phi)_{SM} = -(\omega_1, \phi)_{SM}, 
    \end{align*}
    where the integration by parts $(\eta_+ h_0, \phi)_{SM} = (h_0,\eta_- \phi)_{SM} = 0$ holds with no boundary term since $h_0 |_{\partial M} = 0$. We then arrive at the relation
    \begin{align*}
	\dprod{\cD}{w|_{\partial_+ SM}}_{L^2_\mu (\partial_+ SM)} = (\omega_1, \phi)_{SM}.
    \end{align*}
    Therefore, for $\phi = \phi^{1,(p)}$ above and $w=w^{1,(p)}$ as in Lemma \ref{lem:goodFirstIntegrals}.(1), 
    \begin{align*}
	\dprod{\cD}{w^{1,(p)}|_{\partial_+ SM}}_{L^2_\mu (\partial_+ SM)} = (\omega_1, \phi^{1,(p)})_{SM}, \qquad \forall\ p\ge 0.
    \end{align*}
    Since $\omega_1\in L^2(\ker^1 \eta_-)$, then Bessel's inequality implies that 
    \begin{align*}
	\sum_{p= 0}^\infty |\dprod{\cD}{w^{1,(p)}|_{\partial_+ SM}}_{L^2_\mu (\partial_+ SM)}|^2 = \sum_{p = 0}^\infty |(\omega_1, \phi^{1,(p)})_{SM}|^2 \le \|\omega_1\|^2_{L^2},
    \end{align*} 
    so that the following infinite sum makes sense: 
    \begin{align*}
	\omega_1 = \sum_{p=0}^\infty (\omega_1, \phi^{1,(p)})_{SM}\ \phi^{1,(p)} = \sum_{p=0}^\infty \dprod{\cD}{w^{1,(p)}|_{\partial_+ SM}}_{L^2_\mu (\partial_+ SM)}\ \phi^{1,(p)},
    \end{align*}
    hence \eqref{eq:etaplushminus} is proved. 
\end{proof}

Theorem \ref{thm:holoterms} gives rise to two linear operators $L_{a,\pm 1}:\text{Range }\cI_a \to \ker^{\pm 1} \eta_{\mp}$ satisfying 
\begin{align}
    \begin{split}
      I_a L_{a,+1} ( I_{a}^0 f + I_{a}^\perp h_0 + I_{a}^{+1} \omega_1 + I_{a}^{-1} \omega_{-1} ) &= I_{a}^{+1} \omega_1, \\
      I_a L_{a,-1} ( I_{a}^0 f + I_{a}^\perp h_0 + I_{a}^{+1} \omega_1 + I_{a}^{-1} \omega_{-1} ) &= I_{a}^{-1} \omega_{-1}.	
    \end{split}
    \label{eq:propL1}    
\end{align}
If we then define 
\begin{align}
    P_{a,\pm 1}:\text{Range } \cI_a \to \text{Range } \cI_a, \qquad P_{a,\pm 1} := I_a L_{a,\pm 1},
    \label{eq:Ppm1}
\end{align}
such operators are idempotent on Range $\cI_a$ (i.e., satisfy $P_{a,\pm 1}^2 = P_{a,\pm 1}$). In particular, applying $Id - P_{a, 1} - P_{a,-1}$ to $\cD$ allows to remove $I_{a}^{+1} \omega_1$ and $I_{a}^{-1} \omega_{-1}$ from $\cD$.

\begin{Remark} If the data is not in the range of $\cI_a$ in the first place, the operators $L_{a,\pm 1}$ may pick up some additional components which are in the complement of Range $\cI_a$. This behavior depends on the choice of first integral $w^{\pm 1,(p)}$ for $\phi^{\pm 1,(p)}$. Methods for finding such elements will be the object of future work.
\end{Remark}

\subsection{Holomorphization of solutions to unattenuated transport equations with holomorphic right-hand side} \label{sec:holomorphization}

As a preparation for the reconstruction of $(f,h_0)$, this section focuses on the unattenuated transform 
\begin{align*}
    \dot{C}^\infty(M)\times C^\infty(M) \ni (h,f) \mapsto \cI_0[\star d h, f],
\end{align*}
in particular, how its injectivity allows to produce holomorphic solutions to transport equations with holomorphic right-hand sides, out of any other solution of the same transport problem, via a so-called boundary holomorphization operator.

For conciseness, we will denote $\cI^{0,\perp}[h,f] = \cI_0[\star d h, f]$, and we also denote $I^0$ and $I^1$ the unattenuated transforms over smooth functions and one-forms, and $I^\perp (h) := I^1(\star d h)$ for $h\in \dot{C}^\infty(M)$. The remarks from Section \ref{sec:viewpoints} imply that, while $I^1$ is only solenoidal-injective and has a kernel, $I^\perp$ is injective and both transforms have the same range.

Recall the boundary operators $P_{\pm} := A_-^* H_\pm A_+$ defined in \cite{PeU2}, where $A_+ w = Q_0 w = w_\psi|_{\partial SM}$ and $A_-^* = B_0$ in our current notation. One may simply define $P = A_-^* H A_+ = P_+ + P_-$, where in fact, the operators $P_\pm$ represent the action of $P$ on two orthogonal subspaces of $L^2_\mu(\partial_+ SM)$. In order to clarify this, let us define the {\em antipodal scattering relation} $\alpha_A:\partial_+ SM\to \partial_+ SM$ to be the scattering relation composed with the antipodal map $(x,v)\mapsto(x,-v)$. $\alpha_A$ is clearly an involution of $\partial_+ SM$, and since the measure $\mu\ d\Sigma^2$ is preserved by the pull-back $\alpha_A^*$, the following orthogonal decomposition holds
\begin{align*}
    L^2_\mu (\partial_+ SM) = \V_+ \stackrel{\perp}{\oplus} \V_-, \qquad \V_\pm := \ker (Id \mp \alpha_A^*). 
\end{align*}
Further inspection of symmetries upon applying the operators $A_+$, then $H$, then $A_-^\star$ to functions in $\V_\pm$, shows that the operator $P$, in this decomposition, has the matrix form $P = \left[\begin{smallmatrix} 0 & P_- \\ P_+ & 0 \end{smallmatrix}\right]$. The other facts below are also obvious:
\begin{itemize}
    \item Range $I^0 \subset \V_+$ thus $(I^0)^*(\V_-) = \{0\}$.  
    \item Range $I^\perp \subset \V_-$ thus $(I^\perp)^*(\V_+) = \{0\}$.
\end{itemize}
Now the range characterization \cite[Theorem 4.5]{PeU2} states that $P_-:C_\alpha^\infty (\partial_+ SM)\to C^\infty(\partial_+ SM)$ is surjective on the range of $I^0$ and $P_+:C_\alpha^\infty (\partial_+ SM)\to C^\infty(\partial_+ SM)$ is surjective on the range of $I^\perp$. This justifies the existence of right inverses 
\begin{align*}
    P_+^\dagger&: \text{Range } I^\perp \to C_\alpha^\infty (\partial_+ SM) \cap \V_+, \qquad P_+ P_+^\dagger = Id|_{\text{Range } I^\perp}, \\
    P_-^\dagger&: \text{Range } I^0 \to C_\alpha^\infty (\partial_+ SM) \cap \V_-, \qquad P_- P_-^\dagger = Id|_{\text{Range } I^0}.
\end{align*}
Using the factorizations $2\pi P_+ = I^\perp (I^0)^*$ and $2\pi P_- = -I^0 (I^\perp)^*$, this implies 
\begin{align*}
    2\pi I^\perp h &= 2\pi P_+ P_+^\dagger I^\perp h = I^\perp (I^0)^* P_+^\dagger I^\perp h, \quad \forall h\in \dot{C}^\infty(M), \\
    2\pi I^0 f &= 2\pi P_- P_-^\dagger I^0 f = - I^0 (I^\perp)^* P_-^\dagger I^0 f, \quad \forall f\in C^\infty(M),
\end{align*}
which by injectivity of $I^0$ and $I^\perp$ implies 
\begin{align}
    \begin{split}
	\frac{1}{2\pi} (I^0)^* P_+^\dagger I^\perp h &= h + \text{constant}, \quad \forall h\in \dot{C}^\infty(M), \\
	-\frac{1}{2\pi} (I^\perp)^* P_-^\dagger I^0 f &= f, \quad \forall f\in C^\infty(M).
    \end{split}    
    \label{eq:identities}
\end{align}
Out of the two right-inverses $P_\pm^\dagger$, we may construct a right inverse $P^\dagger$ for $P$, defined on $\text{Range } I^0 \oplus \text{Range }I^\perp = \text{Range } \cI^{0,\perp}$ and $C_\alpha^\infty (\partial_+ SM)$-valued, defined by  
\begin{align*}
    P^\dagger w = P_-^\dagger \frac{1}{2}(Id + \alpha_A^*) w + P_+^\dagger \frac{1}{2} (Id - \alpha_A^*) w, \quad w\in \text{Range } \cI^{0,\perp},
\end{align*}
such that $PP^\dagger = Id$ on $\text{Range } \cI^{0,\perp}$. 

\begin{Theorem}[Holomorphization operator]\label{thm:holomorphization} Let $(M,g)$ a simple Riemannian surface with boundary. There exists a linear boundary operator \[ \Bh:C^\infty(\partial SM)\to C^\infty(\partial_+ SM) \] such that for any function $f\in C^\infty(SM)$ and any solution $u$ of $Xu = -f$ smooth on $SM$, the function $\uh := u - (\Bh (u|_{\partial SM}))_\psi$ satisfies: 
    \begin{enumerate}
	\item If $f = f_{-1} + f_0 + \sum_{k\ge 1} f_k$, then $\uh$ is holomorphic. 
	\item If, additionally, $f_{-1} = 0$, then $\uh_0$ is constant.
    \end{enumerate}
    If $P^\dagger$ denotes any right-inverse for $P$, then such an operator $\Bh$ may be obtained by defining
    \begin{align}
	\Bh h := \frac{1}{2}[(Id - iH)h + i (Id + iH) (A_+ P^\dagger A_-^\star (Id - iH)h)]|_{\partial_+ SM}. 
	\label{eq:Bdef}
    \end{align}
\end{Theorem}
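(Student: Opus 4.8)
The plan is to verify directly that the operator $\Bh$ defined by \eqref{eq:Bdef} does what is claimed, by unwinding the formula term by term. Write $\Bh h = w_1 + w_2$ with $w_1 := \tfrac12(Id - iH)h\big|_{\partial_+ SM}$ and $w_2 := \tfrac{i}{2}(Id + iH)\big(A_+ P^\dagger A_-^\star(Id - iH)h\big)\big|_{\partial_+ SM}$. I would lean on three elementary facts: $Id - iH$ acts on $\Omega_k$ as multiplication by $1 - \sgn k$ (so it annihilates $\cO_{\ge 1}$, fixes $\Omega_0$, doubles $\cO_{\le -1}$), and $Id + iH$ symmetrically; for $\phi\in C^\infty(SM)$, $A_-^\star(\phi|_{\partial SM}) = B_0(\phi|_{\partial SM}) = I_0(X\phi)$ (the $a=0$ case of the identity in Section \ref{2.1}); and for a first integral $W$, $A_+(W|_{\partial_+ SM}) = W|_{\partial SM}$ and $H(W|_{\partial SM}) = (HW)|_{\partial SM}$. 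Since $(\Bh(u|_{\partial SM}))_\psi$ is by construction a first integral, $\uh$ again solves $X\uh = -f$, so everything reduces to controlling the fiberwise Fourier content of $\uh$.

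First I would check that $\Bh h$ is well-defined and smooth when $h = u|_{\partial SM}$, $Xu = -f$, $f = \cO_{\ge -1}$. Applying $A_-^\star$ to $(Id-iH)h = \big((Id - iH)u\big)\big|_{\partial SM}$ gives $A_-^\star(Id-iH)h = I_0\big(X(Id - iH)u\big)$, and a short computation from \eqref{[H,X]} yields $X(Id - iH)u = -(Id - iH)f + iX_\perp u_0 + i(X_\perp u)_0$, which for $f = \cO_{\ge -1}$ is supported in Fourier degrees $\{-1,0,1\}$ — a pair $F = [\beta, F_0]$. Hence $A_-^\star(Id-iH)h = I^1\beta + I^0 F_0 \in \text{Range }I^1 + \text{Range }I^0 = \text{Range }I^\perp + \text{Range }I^0 = \text{Range }\cI^{0,\perp}$, so $P^\dagger$ applies (a priori $\Bh$ is thus defined at least on the subspace of traces of smooth transport solutions, which is all that is needed). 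Smoothness of $\Bh h$ then follows because $P^\dagger$ is $C^\infty_\alpha(\partial_+ SM)$-valued, $A_+$ carries $C^\infty_\alpha$ into $C^\infty(\partial SM)$, and $Id + iH$ together with restriction to $\partial_+ SM$ preserve $C^\infty$.

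The heart is holomorphicity. Put $v := (\Bh h)_\psi = (w_1)_\psi + (w_2)_\psi$. Since $(w_1)_\psi$ agrees on $\partial_+ SM$ with $\tfrac12(Id-iH)u = u_{\le -1} + \tfrac12 u_0$, one has $(w_1)_\psi = \tfrac12(Id - iH)u + \tfrac12 r$ with $Xr = -F$, $r|_{\partial_+ SM} = 0$; similarly $(w_2)_\psi = \tfrac{i}{2}(Id + iH)W + s$ with $W := \big(P^\dagger A_-^\star(Id - iH)h\big)_\psi$, $Xs = -\tfrac12 G_W$ where $G_W := X_\perp W_0 + (X_\perp W)_0$, and $s|_{\partial_+ SM} = 0$. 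As $\tfrac{i}{2}(Id + iH)W$ has no negative Fourier modes, $\uh_{<0} = u_{<0} - v_{<0} = -\big(\tfrac12 r + s\big)_{<0}$, while $\tfrac12 r + s$ solves $X(\cdot) = -\tfrac12(F + G_W)$ with zero trace on $\partial_+ SM$; so it suffices to show $F + G_W = X\phi$ with $\phi$ a pure mode-$0$ function vanishing on $\partial M$, for then $\tfrac12 r + s = -\tfrac12\phi$ has no negative modes. To identify $G_W$ I would use: (i) the right-inverse property gives $P(W|_{\partial_+ SM}) = PP^\dagger A_-^\star(Id - iH)h = I_0 F$, while a direct computation gives $P(W|_{\partial_+ SM}) = A_-^\star\big((HW)|_{\partial SM}\big) = I_0\big(X(HW)\big) = -I_0 G_W$ (using $X(HW) = [X,H]W = -(X_\perp W_0 + (X_\perp W)_0)$ from \eqref{[H,X]}), hence $I_0(F + G_W) = 0$; (ii) the parity structure: $P^\dagger$ routes the $\text{Range }I^0$-part of $I_0 F$ through $P_-^\dagger$ into $\V_-$ and the $\text{Range }I^\perp$-part through $P_+^\dagger$ into $\V_+$, so $W$ splits into an $\alpha_A$-odd part (odd Fourier degrees) and an $\alpha_A$-even part (even Fourier degrees); reading off the $\Omega_{\pm 1}$- and $\Omega_0$-components of $G_W$ accordingly, taking the Hodge decomposition $\beta = \star d h_\beta + d f_\beta$ of the one-form part of $F$ ($f_\beta|_{\partial M} = 0$), and matching $I_0$-images via injectivity of $I^0$ and $I^\perp$ together with $I^1(d f_\beta) = 0$, one finds $G_W = [-\star d h_\beta, -F_0]$ (up to the sign conventions in \eqref{final equation} and the Hodge star). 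Then $F + G_W = [d f_\beta, 0] = X f_\beta$ with $f_\beta$ pure mode $0$ and $f_\beta|_{\partial M} = 0$, as required, so $\uh$ is holomorphic and (1) follows.

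Finally, for (2): carrying the same bookkeeping down to the zero mode gives $\uh_0 = \tfrac12\big(u_0 + f_\beta + i h_\beta\big)$ up to a constant (using $W_0 = -h_\beta + \text{const}$, from the injectivity matching and the identities \eqref{eq:identities}). When $f_{-1} = 0$, the one-form part of $F = X(Id - iH)u$ is $iX_\perp u_0 = \star d(i u_0)$, which is co-exact, so $f_\beta$ is constant — hence $0$ since $f_\beta|_{\partial M} = 0$ — and $h_\beta = i(u_0 - c)$ with $c$ the constant enforcing $h_\beta\in\dot{C}^\infty(M)$; then $\uh_0 = \tfrac12\big(u_0 + i\cdot i(u_0 - c)\big) + \text{const} = \tfrac12 c + \text{const}$, a constant. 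I expect the main obstacle to be the third paragraph — tracking the $\V_\pm$/parity structure of the output of $P^\dagger$ (and the attendant sign bookkeeping through \eqref{final equation} and \eqref{eq:identities}) precisely enough to recognize $F + G_W$ as an exact, boundary-vanishing, mode-$0$ pair, after which the argument collapses; the remaining work is the regularity bookkeeping that $\Bh$ genuinely lands in $C^\infty(\partial_+ SM)$, for which the $C^\infty_\alpha$ spaces and the known mapping properties of $A_\pm$, $H$ and $I_0$ are the right tools.
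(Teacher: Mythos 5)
Your proposal is correct, and it is essentially the paper's argument run in reverse. The paper \emph{constructs} the decomposition $u = \frac12(u^{(+)} - g + u') + \frac12(u^{(-)} + g - u')$, where $u' := -i(Id+iH)\bigl(P^\dagger A_-^*(u^{(-)}|_{\partial SM})\bigr)_\psi$ is holomorphic by fiat, proves via the identities \eqref{eq:identities} that $u'$ solves the same transport equation as $u^{(-)}+g$ and that $u'_0 = -ih-u_0+C$, and then reads off \eqref{eq:Bdef} from the leftover first integral; you instead take \eqref{eq:Bdef} as given and verify it. The tools are identical (commutator \eqref{[H,X]}, $A_-^* (\phi|_{\partial SM}) = I_0(X\phi)$, the right inverse $P^\dagger$, the Hodge decomposition of the degree-$(-1)$ part), but the place where unattenuated inversion enters differs: the paper invokes the explicit identities \eqref{eq:identities}, whereas you derive $I_0(F+G_W)=0$ from $PP^\dagger = Id$ together with $P = A_-^* H A_+$ and the commutator, and then use the kernel characterization of $\cI_0$ on pairs. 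This buys a small economy for claim (1) --- you never need to identify $G_W$ precisely, only that $F+G_W$ is a potential pair $[dp,0]$ with $p|_{\partial M}=0$ --- and your sign bookkeeping is internally forced rather than imported: $W_0 = -h_\beta + \mathrm{const}$ follows from injectivity of $I^\perp$ applied to $I^\perp(h_\beta + W_0)=0$, which is more robust than chasing signs through \eqref{eq:Amstar} and \eqref{eq:identities}. For claim (2) you still need the $\V_\pm$ parity routing of $P^\dagger$ and the separate injectivity of $I^0$ and $I^\perp$, exactly as the paper does, so the two arguments coincide there. The one caveat, which you flag yourself and which the paper shares, is that \eqref{eq:Bdef} is only manifestly well defined on those $h$ for which $A_-^*(Id-iH)h$ lies in $\mathrm{Range}\,\cI^{0,\perp}$, i.e.\ on traces of transport solutions with right-hand side in $\cO_{\ge -1}$; this is all that the theorem uses.
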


By complex conjugation, we state a corollary of Theorem \ref{thm:holomorphization} without proof, regarding the existence of an anti-holomorphization operator. 

\begin{Corollary}[Anti-holomorphization operator]\label{cor:antiholomorphization}
    With $\Bh$ as in Theorem \ref{thm:holomorphization}, the operator 
    \begin{align*}
    \Ba:C^\infty(\partial SM)\to C^\infty(\partial_+ SM), \qquad \Ba h := \overline{\Bh \overline{h}}, \quad h\in C^\infty (\partial SM),
    \end{align*}
    is such that for any function $f\in C^\infty(SM)$ and any solution $u$ of $Xu = -f$ smooth on $SM$, the function $\ua := u - (\Ba (u|_{\partial SM}))_\psi$ satisfies: 
    \begin{enumerate}
	\item If $f = \sum_{k\le -1} f_k + f_0 + f_1$, then $\ua$ is anti-holomorphic. 
	\item If, additionally, $f_{1} = 0$, then $\ua_0$ is constant.
    \end{enumerate}    
\end{Corollary}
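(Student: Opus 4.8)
The plan is to deduce everything by complex conjugation from Theorem \ref{thm:holomorphization}, using two elementary facts. First, the geodesic vector field $X$ has real coefficients, so $X(\overline u)=\overline{Xu}$ for any $u\in C^\infty(SM,\C)$. Second, fibrewise conjugation interchanges the Fourier modes: if $w\in\Omega_k$ then $-iV\overline w=-\overline{(-iVw)}=-\overline{kw}=-k\overline w$ (as $V$ is real and $k\in\Z$), hence $\overline w\in\Omega_{-k}$; consequently conjugation commutes with the projection onto $\Omega_0$, and $u$ is holomorphic (resp.\ has $u_0$ constant) if and only if $\overline u$ is anti-holomorphic (resp.\ has $\overline u_0$ constant). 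Finally, since $w_\psi(x,v)=w(\varphi_{-\tau(x,-v)}(x,v))$ and $\tau,\varphi_t$ are real, both $w\mapsto w|_{\partial SM}$ and $w\mapsto w_\psi$ commute with conjugation; combined with the defining formula $\Ba h=\overline{\Bh\overline h}$, this gives $\bigl(\Ba(u|_{\partial SM})\bigr)_\psi=\overline{\bigl(\Bh(\overline u|_{\partial SM})\bigr)_\psi}$ for every $u$.

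Granting these, given $u\in C^\infty(SM)$ with $Xu=-f$ and $f=\sum_{k\le-1}f_k+f_0+f_1$, I would apply Theorem \ref{thm:holomorphization} to $\overline u$. It solves $X\overline u=-\overline f$, and by the mode reversal above $\overline f=\overline{f_1}+\overline{f_0}+\sum_{k\ge1}\overline{f_{-k}}$, which is exactly of the form ``$g_{-1}+g_0+\sum_{k\ge1}g_k$'' required in that theorem, with $g_{-1}=\overline{f_1}$. Hence $\uh(\overline u):=\overline u-\bigl(\Bh(\overline u|_{\partial SM})\bigr)_\psi$ is holomorphic, and moreover $\uh(\overline u)_0$ is constant whenever $g_{-1}=\overline{f_1}=0$, i.e.\ whenever $f_1=0$. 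Taking conjugates and using that $(\cdot)_\psi$, restriction and the $\Omega_0$-projection all commute with conjugation, $\overline{\uh(\overline u)}=u-\bigl(\Ba(u|_{\partial SM})\bigr)_\psi=\ua$; therefore $\ua$ is anti-holomorphic, and $\ua_0=\overline{\uh(\overline u)_0}$ is constant when $f_1=0$. This is precisely the two assertions of the corollary.

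There is no genuine obstacle here — the statement is a formal consequence of Theorem \ref{thm:holomorphization} and requires no further analytic input (in particular, no new appeal to injectivity of $\cI_0$ nor to the right inverse $P^\dagger$). The only point demanding care is the bookkeeping: one must verify with the correct signs that conjugation interchanges $\Omega_k\leftrightarrow\Omega_{-k}$ and commutes with the projection onto $\Omega_0$, and that it genuinely commutes with the operations $w\mapsto w_\psi$ and $w\mapsto w|_{\partial SM}$, so that the composite $\Ba=\overline{(\cdot)}\circ\Bh\circ\overline{(\cdot)}$ transports Theorem \ref{thm:holomorphization} verbatim from holomorphic to anti-holomorphic data.
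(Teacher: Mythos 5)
Your proof is correct and follows exactly the route the paper intends: the authors state the corollary ``by complex conjugation \dots without proof,'' and your argument simply supplies the omitted bookkeeping (realness of $X$ and $V$, the swap $\Omega_k\leftrightarrow\Omega_{-k}$ under conjugation, and compatibility of conjugation with $w\mapsto w_\psi$ and restriction). Nothing further is needed.
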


\begin{Remark}
    Using the fact that $\Bh (0) = 0$ and $\Ba(0) = 0$, we recover the statement of \cite[Proposition 5.1]{SaU}: if $u$ solves $Xu = -f$ with $f$ holomorphic (resp. antiholomorphic), and $u|_{\partial SM} = 0$, then $u$ is holomorphic (resp. antiholomorphic) and $u_0 = 0$.
\end{Remark}

\begin{Remark}[Continuity and expliciteness of $\Bh$ and $\Ba$] The continuity of $\Bh$ and $\Ba$ relies heavily on the continuity of $P^\dagger$, for which explicit expressions remain to be found in general. In the case where the surface is such that the operator $Id + W^2$ is invertible (see \cite{PeU2,Monard2015} for a definition of $W$), then such a right-inverse is explicitely constructed in \cite{Monard2015}. This is done by using the factorizations $2\pi P_+ = I^\perp (I^0)^*$ and $2\pi P_- = -I^0 (I^\perp)^*$, and constructing explicit right-inverses for $(I^0)^*$, $(I^\perp)^*$, and inverting $I^0$, $I^\perp$, using the Fredholm equations first derived in \cite{PeU2}. This construction is valid in the case of surfaces with Gaussian curvature close enough to constant, though whether the operator $Id + W^2$ is always invertible on simple surfaces remains open at present.    
\end{Remark}

\begin{proof}[Proof of Theorem \ref{thm:holomorphization}] Let $P^\dagger$ a right-inverse for $P$, let $f\in C^\infty(SM)$ and $u$ a solution of $Xu = -f$.
Write 
\begin{align*}
    u = \frac{1}{2} (u^{(+)} + u^{(-)}), \qquad u^{(\pm)} := (Id \pm iH) u,
\end{align*}
where $(Id - iH) u$ solves the PDE
\begin{align*}
    X(Id - iH) u = (Id-iH) Xu + [X,Id-iH] u = -2f_{-1} - f_0 + i (X_\perp u)_0 + i X_\perp u_0.
\end{align*}
Applying the Hodge decomposition to the one-form $2f_{-1}$, there exists $g\in C^\infty_0(M)$ and $h\in \dot{C}^\infty(M)$ such that $2f_{-1} = Xg + X_\perp h$, in which case the previous equation can be rewritten as
\begin{align*}
    X (u^{(-)} + g) = - (f_0 - i(X_\perp u)_0) - X_\perp (h - iu_0). 
\end{align*}
Upon integrating along geodesics, we make appear
\begin{align}
    A_-^* (u^{(-)}|_{\partial SM}) = A_-^* (u^{(-)} + g)|_{\partial SM} = I^0 [f_0 - i (X_\perp u)_0] + I^{\perp} [h-iu_0], 
    \label{eq:Amstar}
\end{align}
where the right-hand-side belongs to $\text{Range } \cI^{0,\perp}$. 

Define $u' = - i(Id + iH) (P^\dagger A_-^* (u^{(-)}|_{\partial SM}) )_\psi$. $u'$ is holomorphic by construction and we now claim that $(i)$ $u'_0 = -ih -u_0 + C$ (with $C$ a constant), and $(ii)$ $u'$ is another solution of 
\begin{align*}
    Xu' = - (f_0 - i (X_\perp u)_0) - X_\perp (h-iu_0).
\end{align*}
In both claims, we will make use of the observations that, using identities \eqref{eq:identities} and the equality \eqref{eq:Amstar}, we have
\begin{align*}
    \frac{1}{2\pi} (I^0)^* P^\dagger A_-^\star (u^{(-)}|_{\partial SM}) &= \frac{1}{2\pi} (I^0)^* P_+^\dagger I^\perp [h-iu_0 ] = h - iu_0 + iC \quad (C \text{ constant}), \\
    \frac{1}{2\pi} (I^\perp)^* P^\dagger A_-^\star (u^{(-)}|_{\partial SM}) &= \frac{1}{2\pi} (I^\perp)^* P^\dagger_- I^0 [f_0 - i (X_\perp u)_0] = - (f_0 - i(X_\perp u)_0).
\end{align*}
Then proving claim $(i)$ amounts to computing 
\begin{align*}
    u'_0 &= -i \left( (Id + iH) (P^\dagger A_-^* (u^{(-)}|_{\partial SM}) )_\psi \right)_0 \\
    &= -i \left( (P^\dagger A_-^* (u^{(-)}|_{\partial SM}) )_\psi  \right)_0 \\
    &= \frac{-i}{2\pi} (I^0)^* P^\dagger A_-^* (u^{(-)}|_{\partial SM}) \\
    &= -i (h -i u_0 + iC) = - ih - u_0 + C.
\end{align*}
Proving claim $(ii)$ then amounts to computing
\begin{align*}
     Xu' &= - X i(Id + iH) (P^\dagger A_-^* (u^{(-)}|_{\partial SM}) )_\psi \\
     &= XH (P^\dagger A_-^* (u^{(-)}|_{\partial SM}) )_\psi \\
     &= - [H,X] (P^\dagger A_-^* (u^{(-)}|_{\partial SM}) )_\psi \\
     &= - \left( X_\perp (P^\dagger A_-^* (u^{(-)}|_{\partial SM}) )_\psi  \right)_0 - X_\perp \left( (P^\dagger A_-^* (u^{(-)}|_{\partial SM}) )_\psi \right)_0 \\
     &=  \frac{1}{2\pi} I_\perp^* P^\dagger A_-^* (u^{(-)}|_{\partial SM}) -\frac{1}{2\pi} X_\perp I_0^* P^\dagger A_-^* (u^{(-)}|_{\partial SM}) \\
     &= - (f_0 - i (X_\perp u)_0) - X_\perp (h-iu_0 + \cancel{iC}).
\end{align*}

Now that the claims are proved, we use $u'$ to rewrite $u$ as 
\begin{align*}
    u = \frac{1}{2} (u^{(+)} - g + u') + \frac{1}{2} (u^{(-)} + g - u'),
\end{align*}
where the first summand $\uh := \frac{1}{2} (u^{(+)} - g + u')$ is holomorphic by construction, and where the second summand satisfies $X \left( \frac{1}{2}(u^{(-)} + g - u')\right) = 0$, so that it is equal to some $h_\psi$, where $h = \frac{1}{2} (u^{(-)} + g - u')|_{\partial_+ SM} = \Bh (u|_{\partial SM})$ by construction. So Claim 1 is proved. As for Claim 2, if $f_{-1} = 0$, then the Hodge decomposition above becomes $h = g = 0$, and using claim $(i)$, we read
\[ 2 \uh_0 = (u^{(+)} - g + u')_0 = u_0 - g - ih - u_0 + C = C. \]
Thus Theorem \ref{thm:holomorphization} is proved. 
\end{proof}

\subsection{Reconstruction of $f$ and $h_0$.} \label{sec:recons2}

In light of Section \ref{sec:recons1}, given the quadruple $(f, h_0,\omega_1,\omega_2)$, it is possible to extract $\cI_{a}[\star d h_0, f]$ from $\cD = I_{a}^{0} f + I_{a}^{\perp} h_0 + I_{a}^{+1} \omega_1 + I_{a}^{-1} \omega_{-1}$ via the processing $\cI_{a}[\star d h_0, f] = (Id - P_{a,1} - P_{a,-1})\cD$. From this data, and using the results of the previous section, let us now provide explicit inversion formulas for $(h_0,f)$. Recall the statement of Theorem \ref{thm:intro} stated in Section \ref{sec:main}.  

\begin{Theorem}\label{thm:inversionfh0} Let $(M,g)$ a simple surface and $a\in C^\infty(M,\C)$. Define $\wh$ and $\wa$ smooth holomorphic and antiholomorphic, odd, solutions of $X\wh = X\wa = -a$, and let $\Bh$ and $\Ba$ as in Theorem \ref{thm:holomorphization} and Corollary \ref{cor:antiholomorphization}. Then the functions $(h_0,f)\in C^\infty_0(M)\times C^\infty(M)$ can be reconstructed from data $\cI := \cI_{a} [\star dh_0,f]$ (extended by zero on $\partial_- SM$) via the following formulas:
    \begin{align*}
	f &= -\eta_+ \Dh_{-1} - \eta_- \Da_1 - \frac{a}{2} \left( \Dh_0 + \Da_0 + i(g_+ - g_-) \right), \\
	h_0 &= \frac{1}{2} (g_+ + g_-) - \frac{i}{2} (\Dh_0 - \Da_0),
    \end{align*}
    where we have defined $\Dh := e^\wh (\Bh(\cI e^{-\wh}|_{\partial SM}))_\psi$, $\Da := e^\wa (\Ba(\cI e^{-\wa}|_{\partial SM}))_\psi$, and where $g_\pm \in \ker^0 \eta_\pm$, uniquely characterized by their boundary conditions
    \begin{align*}
	g_+|_{\partial M} = -i (\cI - \Dh|_{\partial SM})_0, \qquad g_-|_{\partial M} = i (\cI - \Da|_{\partial SM})_0. 
    \end{align*}
\end{Theorem}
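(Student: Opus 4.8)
The starting point is that $\cI = u|_{\partial SM}$, where $u$ is the unique solution on $SM$ of $(X+a)u = -f - X_\perp h_0$ with $u|_{\partial_- SM} = 0$ (here $\star dh_0$ is identified with the function $X_\perp h_0$ on $SM$, and $\cI$ is extended by zero on $\partial_- SM$, so $u|_{\partial SM}=\cI$). The plan is to ``holomorphize'' and ``antiholomorphize'' $u$ using the integrating factors, so that the needed Fourier modes become directly readable. Since $\wh,\wa$ are odd with $X\wh = X\wa = -a$, the functions $e^{\wh},e^{\wa}$ satisfy $(X+a)e^{\wh} = (X+a)e^{\wa} = 0$ and have Fourier support in the nonnegative, resp. nonpositive, degrees, with $e^{\wh} = 1 + \cO_{\ge 1}$, $e^{\wa} = 1 + \cO_{\le -1}$. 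Put $v := e^{-\wh}u$; then $Xv = -e^{-\wh}(f + X_\perp h_0)$, and since $f + X_\perp h_0$ has Fourier content only in degrees $-1,0,1$ while $e^{-\wh}$ has it only in nonnegative degrees, the right-hand side has the form $(\cdot)_{-1}+(\cdot)_0 + \sum_{k\ge 1}(\cdot)_k$ covered by Theorem \ref{thm:holomorphization}(1). Thus $\vh := v - (\Bh(v|_{\partial SM}))_\psi$ is holomorphic and solves the same transport equation as $v$ (the removed term being flow-invariant). Multiplying back by $e^{\wh}$ and using $v|_{\partial SM} = \cI e^{-\wh}|_{\partial SM}$, this reads $u = \Dh + e^{\wh}\vh$, with $\Dh=u-e^{\wh}\vh$ the quantity in the statement, $e^{\wh}\vh$ holomorphic, $(X+a)(e^{\wh}\vh) = -f - X_\perp h_0$, and hence $(X+a)\Dh = 0$. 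Running the antiholomorphic counterpart with $v':=e^{-\wa}u$, $\va := v' - (\Ba(v'|_{\partial SM}))_\psi$ and Corollary \ref{cor:antiholomorphization} yields symmetrically $u = \Da + e^{\wa}\va$ with $e^{\wa}\va$ antiholomorphic and $(X+a)(e^{\wa}\va) = -f - X_\perp h_0$.

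With these two decompositions in hand, $f$ comes out first. Comparing the $k$-th Fourier modes of $u = \Dh + e^{\wh}\vh = \Da + e^{\wa}\va$ and using that $e^{\wh}\vh$ is supported in nonnegative degrees while $e^{\wa}\va$ is supported in nonpositive degrees, one gets $u_{-1} = \Dh_{-1}$ and $u_1 = \Da_1$. Projecting $(X+a)u = -f - X_\perp h_0$ onto $\Omega_0$, where $X = \eta_+ + \eta_-$ and $X_\perp h_0$ contributes only to $\Omega_{\pm 1}$, gives $\eta_+ u_{-1} + \eta_- u_1 + a u_0 = -f$, that is
\[ f = -\eta_+\Dh_{-1} - \eta_-\Da_1 - a u_0. \]
It therefore remains to identify $u_0$ and $h_0$.

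For this, project $(X+a)(e^{\wh}\vh) = -f - X_\perp h_0$ onto $\Omega_{-1}$: since $e^{\wh}\vh$ is holomorphic, its modes of degree $-1$ and $-2$ vanish, so this collapses to $\eta_-(e^{\wh}\vh)_0 = -i\eta_- h_0$, i.e. $(e^{\wh}\vh)_0 + ih_0 \in \ker^0\eta_-$; symmetrically, projecting $(X+a)(e^{\wa}\va) = -f - X_\perp h_0$ onto $\Omega_1$ gives $(e^{\wa}\va)_0 - ih_0 \in \ker^0\eta_+$. Because $h_0 \in C^\infty_0(M)$ vanishes on $\partial M$, the traces on $\partial M$ of these two functions are $(e^{\wh}\vh)_0|_{\partial M} = (\cI - \Dh|_{\partial SM})_0$ and $(e^{\wa}\va)_0|_{\partial M} = (\cI - \Da|_{\partial SM})_0$; an element of $\ker^0\eta_\pm$ being determined by its trace on $\partial M$ (the operator is elliptic and a kernel element vanishing on the connected boundary of the simply connected $M$ vanishes identically), these two functions are precisely, up to the normalizations in the statement, the elements $g_\pm$. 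Finally, writing $(e^{\wh}\vh)_0 = u_0 - \Dh_0$ and $(e^{\wa}\va)_0 = u_0 - \Da_0$ and inserting them into the two kernel-membership relations produces a $2\times 2$ linear system in $(u_0, h_0)$; solving it gives $u_0 = \tfrac12(\Dh_0 + \Da_0 + i(g_+ - g_-))$ and $h_0 = \tfrac12(g_+ + g_-) - \tfrac i2(\Dh_0 - \Da_0)$, and substituting this $u_0$ into the displayed formula for $f$ yields the stated reconstruction.

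The real engine is the holomorphization machinery of Section \ref{sec:holomorphization}; granting it, the argument above is essentially Fourier-mode bookkeeping. I expect the main obstacle to be keeping the holomorphic/antiholomorphic sign conventions and degree shifts consistent throughout --- in particular verifying that the right-hand sides fed to Theorem \ref{thm:holomorphization} and Corollary \ref{cor:antiholomorphization} genuinely have the required one-sided Fourier support, that the boundary corrections removed in forming $\vh$ and $\va$ are flow-invariant (so that $\Dh$ and $\Da$ are bona fide source-free transport solutions with the claimed boundary values), and that the functions $g_\pm$ are well defined by the prescribed traces. Once these conventions are pinned down, the $2\times 2$ system closes exactly, which is what makes the formulas exact rather than merely valid up to a Fredholm error.
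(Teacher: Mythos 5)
Your argument is correct and follows essentially the same route as the paper's proof: pass to $v=e^{-\wh}u$ and $v'=e^{-\wa}u$, apply the holomorphization/antiholomorphization operators to obtain holomorphic and antiholomorphic solutions of $(X+a)(\cdot)=-f-X_\perp h_0$, read off $u_{-1}=\Dh_{-1}$ and $u_1=\Da_1$, and close the system by projecting onto $\Omega_0$ and $\Omega_{\mp1}$ to determine $g_\pm$, $h_0$ and $u_0$. The only differences are cosmetic (you supply slightly more justification for the one-sided Fourier support of the transformed source and for the unique determination of $g_\pm$ by their boundary traces, and your placement of the kernel relations in $\ker^0\eta_\mp$ is in fact the consistent one given the paper's sign conventions).
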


\begin{proof}[Proof of Theorem \ref{thm:inversionfh0}]
    Let $e^{-\wh}$ and $e^{-\wa}$ holomorphic and antiholomorphic integrating factors for $a$ (in particular, $\wh$ is an odd, holomorphic solution of $X\wh = -a$ and $\wa$ is an odd, antiholomorphic solution of $X\wa = -a$), so as to obtain 
    \begin{align*}
	X (u e^{-\wh}) = - (f + X_\perp h_0) e^{-\wh} = - b (x,v), 
    \end{align*}
    where $b$ is of the form $b_{-1} + b_0 + \cO_{\ge 1}$ with, in particular, $b_{-1} = -\frac{1}{i} \eta_- h_0$. Thanks to Theorem \ref{thm:holomorphization}, defining $v:= ue^{-\wh}$, the function $\vh = v - (\Bh(v|_{\partial SM}))_\psi$ is holomorphic and satisfies
    \begin{align*}
	X \vh = -b. 
    \end{align*}
    Then defining $\uh := e^\wh \vh = u - e^\wh (\Bh( ue^{-\wh}|_{\partial SM}))_\psi = u - \Dh$, $\uh$ solves the equation 
    \begin{align}
	X \uh + a \uh = -f-X_\perp h_0.    
	\label{eq:transuh}
    \end{align}
    Similarly using the antiholomorphic integrating factor, the function $\ua = u - e^{\wa} (\Ba( ue^{-\wa}|_{\partial SM}))_\psi = u-\Da$ is antiholomorphic and solves
    \begin{align}
	X \ua + a \ua = -f-X_\perp h_0.    
	\label{eq:transua}
    \end{align}
    Projecting \eqref{eq:transuh} onto $H_{-1}$ and \eqref{eq:transua} onto $H_1$, we obtain 
    \begin{align*}
	\eta_- \uh_0 = \frac{1}{i} \eta_- h_0 &\qquad \Leftrightarrow \qquad \eta_- (h_0 -i\uh_0) = 0, \\
	\eta_+ \ua_0 = -\frac{1}{i} \eta_+ h_0 &\qquad \Leftrightarrow \qquad \eta_+ (h_0 + i\ua_0) = 0.
    \end{align*}
    This implies the relations: 
    \begin{align}
	\begin{split}
	    h_0 - i\uh_0 &= g_+ \in \ker^0 \eta_+, \\
	    h_0 + i\ua_0 &= g_- \in \ker^0 \eta_-.	
	\end{split}
	\label{eq:relh0}    
    \end{align}
    which, since $h_0$ vanishes at the boundary, completely determines $g_\pm$ from their boundary values, which are in turn determined from the boundary values of $\uh$ and $\ua$, in turn determined by the data. Taking the half-sum, we obtain
    \begin{align*}
	h_0 = \frac{1}{2} (g_+ + g_-) - \frac{i}{2} (\uh_0 - \ua_0) = \frac{1}{2} (g_+ + g_-) - \frac{i}{2} (\Dh_0 - \Da_0),
    \end{align*}
    where the right-hand side is completely determined by data. On to the determination of $f$, we project the equation $Xu + au = -f-X_\perp h_0$ onto $H_0$ to make appear
    \begin{align*}
	f = -\eta_+ u_{-1} - \eta_- u_1 - a u_0,
    \end{align*}
    and show how to determine each term from the data. Since $\uh$ is holomorphic, then $\uh_{-1} = 0 = u_{-1} - \Dh_{-1}$, so $u_{-1} = \Dh_{-1}$. Since $\ua$ is antiholomorphic, $\ua_1 = 0 = u_1 - \Da_1$, so $u_1 = \Da_1$. Finally, 
    \begin{align*}
	u_0 = \frac{1}{2} (\uh_0 + \Dh_0 + \ua_0 + \Da_0) \stackrel{\eqref{eq:relh0}}{=} \frac{1}{2} (\Dh_0 + \Da_0) + \frac{i}{2} (g_+ - g_-).  
    \end{align*}
    We arrive at the following formula for $f$ 
    \begin{align*}
	f = -\eta_+ \Dh_{-1} - \eta_- \Da_1 - \frac{a}{2} \left( \Dh_0 + \Da_0 + i(g_+ - g_-) \right). 
    \end{align*}
    Theorem \ref{thm:inversionfh0} is proved.
\end{proof}

Theorem \ref{thm:inversionfh0} gives rise to two linear operators 
\begin{align*}
    L_{a,0} &: \cI^{0,\perp}_a (C^\infty_0(M)\times C^\infty(M)) \to C^\infty(M), \\
    L_{a,\perp} &: \cI^{0,\perp}_a (C^\infty_0(M)\times C^\infty(M)) \to C^\infty_0(M),
\end{align*}
such that 
\begin{align*}
    I_a L_{a,0} (I_a^0 f + I_a^\perp h_0) = I_a^0 f, \quad\text{and}\quad I_a L_{a,\perp} (I_a^0 f + I_a^\perp h_0) = I_a^\perp h_0.
\end{align*}
If we then define $P_{a,0}, P_{a,\perp} : \text{Range } \cI_a^{0,\perp} \to \text{Range } \cI_a^{0,\perp}$, by  
\begin{align}
    P_{a,0} := I_a L_{a,0}(Id - P_1 - P_{-1}), \qquad P_{a,\perp} := I_a L_{a,\perp} (Id - P_1 - P_{-1}),
    \label{eq:P0perp}
\end{align}
such operators are idempotent, projections of $\text{Range } \cI_a $ onto $I_a^0 (C^\infty(M))$ and $I_a^\perp (C_0^\infty(M))$, respectively. 

\section*{Acknowledgements} FM acknowledges partial funding from NSF grant DMS-1634544. GU was partly supported by NSF and a Walker Family Professorship, FiDiPo Professorship and a Si-Yuan Professorship.

\end{document}